\newtheorem{theorem}{Theorem}[section]
\newtheorem{lemma}[theorem]{Lemma}
\newtheorem{proposition}[theorem]{Proposition}
\newtheorem{corollary}[theorem]{Corollary}
\theoremstyle{definition}
\newtheorem{definition}[theorem]{Definition}
\newtheorem{remark}[theorem]{Remark}
\newtheorem{example}[theorem]{Example}
\numberwithin{equation}{section}
\newcommand{\gras}[1]{{\mathbb #1}} 
\newcommand{\N}{\gras{N}}
\newcommand{\Z}{\gras{Z}} 
\newcommand{\Q}{\gras{Q}} 
\newcommand{\R}{\gras{R}} 
\newcommand{\C}{\mathbb{K}}
\newcommand{\cE}{\mathcal{E}}
\newcommand{\Supp}{{\mathcal{S}}}
\newcommand{\Irr}{{\mathrm{Irr}}}
\newcommand{\Ess}{{\mathrm{ess}}}
\def\elem(#1,#2){  \{ \frac{#1}  {\overline {\ #2\ }} \} }
\title[Variations on inversion theorems for Newton-Puiseux series]
{Variations on inversion theorems for Newton-Puiseux series  }
\author{Evelia Rosa Garc\'{\i}a Barroso}
\address{Departamento de Matem\'aticas, Estad\'{\i}stica e I.O.
Secci\'on de Matem\'aticas, Universidad de La Laguna. Apartado de Correos 456.
38200 La Laguna, Tenerife, Espa\~na.}
   \email{ergarcia@ull.es}
\author{Pedro Daniel Gonz\'alez P\'erez} 
\address{Instituto de Ciencias Matem\'aticas (CSIC-UAM-UC3M-UCM), Departamento de \'Algebra , Facultad de Ciencias Matem\'aticas,
Universidad Complutense de Madrid, Plaza de las Ciencias 3, Madrid 28070, Espa\~na.}
   \email{pgonzalez@mat.ucm.es}
\author{Patrick Popescu-Pampu}
   \address{Univ. Lille, UMR 8524, Laboratoire Paul Painlev\'e, F-59000 Lille, France.}
   \email{patrick.popescu@math.univ-lille1.fr}
\date{3 December 2016}
\subjclass[2010]{14B05 (primary), 14H20, 32S25}
\keywords{Branch, Characteristic exponents, plane curve singularities, 
hypersurface singularities, quasi-ordinary series,  Lagrange inversion, Newton-Puiseux series.}
\begin{document}

\begin{abstract}
    Let $f(x,y)$ be an irreducible formal power series without constant term, over 
    an algebraically closed field of characteristic zero. 
    One may solve the equation $f(x,y)=0$ by choosing either $x$ or $y$ 
    as independent variable, getting two finite sets of Newton-Puiseux series. 
    In 1967 and 1968 respectively, Abhyankar and Zariski published proofs 
    of an \emph{inversion theorem}, expressing the \emph{characteristic exponents} of one set 
    of series in terms of those of the other set. In fact, a more general theorem, 
    stated by Halphen in 1876 and proved by Stolz in 1879,     
    relates also the \emph{coefficients} of the characteristic terms of both sets of series. 
    This theorem seems to have been completely forgotten. We give 
    two new proofs of it and we generalize it to a theorem concerning 
    irreducible series with an arbitrary number of variables. 
\end{abstract}

\maketitle

\begin{center}
    {\bf This paper appeared online in Mathematische Annalen on 3.12.2016. 
         The final publication is available at Springer via http://dx.doi.org/10.1007/s00208-016-1503-1}
\end{center}

\tableofcontents

\section{Introduction}
\label{sec:intro}

Let $f(x,y)$ be a polynomial with complex coefficients and without constant term. 
In his \emph{Treatise of fluxions and of infinite series} \cite[Sect. XXIX-XXXIII]{N 36}, 
Newton described 
an iterative method to compute a  formal power series $\eta \in \mathbb{C}[[t]]$, such that 
$f(x, \eta(x^{1/n})) =0$ for certain positive integer $n$. In his 1850 paper \cite{P 50}, 
Puiseux  proved that the series produced  
by Newton's algorithm were convergent whenever one starts from 
a convergent series $f(x,y) \in \mathbb{C}[[x, y]]$.  Since then, the series whose 
exponents are positive rational  numbers with bounded 
denominators, be they convergent or not, are called either \emph{Newton-Puiseux 
series} or \emph{Puiseux series}. In the sequel we  will use the first denomination. 

In the years 1870, Smith \cite{S 73}  and Halphen \cite{H 76} realized that for several 
questions about the singularities of plane algebraic curves, a finite number 
of the exponents of a Newton-Puiseux 
series were more important than the others. Halphen called those special exponents 
\emph{characteristic}. Their modern definition is the following one: if one looks 
at the sequence of exponents taken in increasing size, then the characteristic ones  
are exactly those at which jumps the lowest common denominator which may be used for the 
exponents up to that point.  It is a basic fact that if $f(x,y) \in \mathbb{C}[[x, y]]$ is irreducible, 
then all the associated Newton-Puiseux series have the same sequence of characteristic exponents.
Let us call it \emph{the characteristic sequence of $f(x,y)$ relative to $x$}. 

Nowadays, one describes usually the importance of this notion as follows. 
Consider a \emph{branch} $C$ (that is, an irreducible germ of curve) 
on a germ of smooth complex analytic surface $S$. If $(x,y)$ are local 
coordinates on $S$ and $f(x,y) \in \mathbb{C}[[x,y]]$ is a defining function of 
$C$ relative to those coordinates, then one may consider its characteristic sequence relative 
to $x$. This sequence is independent of $(x,y)$ once  the $y$-axis is 
transversal to the branch $C$, that is, once its tangent does not coincide with the reduced 
tangent cone of $C$, which is a line. One speaks then of \emph{the generic characteristic 
sequence of $C$}. Its main property is that \emph{it is a complete 
invariant of the embedded topological type of the branch in the ambient 
germ of surface} (see for instance \cite[Theorem 5.5.8]{W 04}).   
In fact, most computations of other topological invariants of the pair $(S, C)$ are done 
in terms of its generic characteristic sequence.

It is nevertheless important to work also with Newton-Puiseux series computed 
relative to \emph{non-generic} coordinate systems. For instance, another usual way to study 
the branch $C$ is to perform its process of embedded 
resolution by blow-ups of points (see for instance \cite[Chapter 3]{W 04}).  A basic 
problem is then to express the generic characteristic sequence of the 
strict transform of $C$  obtained after one blow up in terms of that of $C$. 
If one starts from a generic Newton-Puiseux series  $\eta(x^{1/n})$ of $C$, 
then $x^{-1} \cdot \eta(x^{1/n})$ is a Newton-Puiseux series of the strict transform of 
$C$. This series is generic for the strict transform if and only if the $x$-order 
of $\eta(x^{1/n})$ is at least $2$. In this case, it is immediate to get from it the 
generic characteristic sequence of the strict transform. But how to proceed when 
this is not the case?

One gets the following problem, in whose formulation we replaced for simplicity 
the strict transform by the initial branch: to 
compute the generic characteristic exponents of $C$ in terms of those of a Newton-Puiseux 
series  $\eta(x^{1/n})$ of a defining function $f(x,y)$, when $C$ is tangent to the $y$-axis. 
But in this case $C$ is necessarily transversal to the $x$-axis. Therefore, if 
$\xi(y^{1/m})$ is a Newton-Puiseux series of $f(x,y)$ relative to $y$, that is, if  $f(\xi(y^{1/m}), y) =0$,  
then its characteristic sequence is exactly the generic characteristic sequence of $C$.  
Consequently, it is enough to express the characteristic 
sequence of $\xi(y^{1/m})$ in terms of that of $\eta(x^{1/n})$. 
Such an \emph{inversion theorem} (called in this way because one \emph{inverts} 
the roles of $x$ and $y$ in passing from $\eta(x^{1/n})$ 
to $\xi(y^{1/m})$) is well-known and it is often 
attributed to Abhyankar's paper  \cite{A 67} of 1967 or to Zariski's paper \cite{Z 68} of 1968. 
Proofs of this inversion theorem can be also found in  \cite[Section 5.6]{CA 00}, \cite
[Theorem 5.2.21]{DJP 00}, \cite[Proposition 4.3]{PP 03} and \cite[Page 111]{A 10}.

We were very surprised to discover that in his 1876 paper \cite[page 91]{H 76}, 
Halphen had already formulated  a stronger result than the previous inversion 
theorem. He did not provide a proof of it. As far as we know, the first 
proof was given by Stolz \cite[Sect. 3]{S 79} in 1879. For this reason, 
we will speak in the sequel about \emph{the Halphen-Stolz inversion theorem}. 
It is stronger than the inversion theorem of Abhyankar-Zariski 
because it does not only provide formulae for the characteristic 
exponents of $\xi(y^{1/m})$ in terms of those of $\eta(x^{1/n})$, 
\emph{but also for the corresponding coefficients}.  The previous papers of 
Halphen and Stolz seem to be forgotten, even though they were mentioned in 
Halphen's appendix \cite{H 84} to Salmon's 
treatise on plane algebraic curves, which is cited sometimes nowadays.

\medskip

\emph{The aim of our paper is to extend the Halphen-Stolz inversion theorem to an arbitrary 
number of variables.} We achieve this aim in Corollary \ref{invPuiseuxgen} of our 
Inversion Theorem \ref{inversioncoefgen}. 

\medskip

In order to arrive at those results, we give first 
two new proofs of the classical Halphen-Stolz 
inversion theorem (stated by us as Corollary \ref{invPuiseux} of Theorem \ref{inversioncoef}). 
The first one is based on the relations between the coefficients associated 
to the \emph{irreducible exponents}  
of an invertible power series (see Definition \ref{defindec}), 
those of its powers and those of its \emph{dual} 
(see Proposition \ref{esspowers}). A flow-chart representing our line of reasoning 
for this first proof is drawn in diagram (\ref{reasoning}). 
Our second proof uses a formula expressing \emph{all} the coefficients 
of the Newton-Puiseux series $\xi(y^{1/m})$ in terms of those of 
$\eta(x^{1/n})$ (see Proposition \ref{invexpl}). This formula,  
based on the \emph{Lagrange inversion theorem} (see Theorem \ref{Lagrinv}), 
generalizes the Halphen-Stolz inversion formula for the coefficients.  

It is the first proof which we extend into a proof of the several-variables case. We could have 
given directly the most general statements and proofs. We preferred to start explaining  
in a detailed way the classical case, because it served us as a model for building the 
general proof, and because we feel that in this way the paper is easier to read. Its title is 
inspired by the title of Griffiths' paper \cite{G 76}. 

As the generic characteristic sequence is crucial for understanding the embedded topology 
of complex plane branches, we expect that the associated coefficients 
could play an important role in 
problems related not only to their topology, but also to their analytical type. It is the main 
reason why we considered that it is important to bring to the attention of researchers 
the forgotten inversion theorem of Halphen-Stolz.

Zariski's proof of the inversion theorem in \cite{Z 68} 
was obtained as an application of the theory of saturation of 
local rings, in connexion with the study of topological equisingularity. 
There exist other notions of saturation, for instance, the Lipschitz-saturation (see \cite{PT 69, L 75}) 
and also the presaturation of Campillo, which is better adapted to 
positive characteristic (see \cite{C 83, C 88}).
In the case of irreducible germs of \emph{quasi-ordinary} hypersurface singularities,
Zariski's results on saturation and Lipman's \emph{inversion theorem}  appear also in 
the combinatorial characterization of the embedded topological type of this class of singularities 
 (see \cite{G 88, L 88A}).
We expect that these lines of research combined with our generalized inversion theorem 
will lead to a better understanding of the invariants of singularities of other classes of
hypersurface germs.

\medskip

The article is structured as follows.
In Section \ref{sec:NPThm} we recall basic facts about Newton-Puiseux series 
associated to plane branches and their characteristic exponents. 
In Section \ref{sec:irred} we introduce the notions of \emph{irreducible} 
and \emph{essential exponents} of a
series and we give some results relating the coefficients of certain pairs of invertible series. 
In Section \ref{sect:applinv} we explain our proofs  of the Halphen-Stolz 
inversion theorem and of its generalization into an inversion formula for all coefficients, 
based on a Lagrange inversion formula. Finally, in Section \ref{arbnumb} we prove our 
generalization concerning an arbitrary number of variables and we explain in which way 
it extends the inversion theorem of Lipman concerning the characteristic exponents 
of quasi-ordinary series. Note that we work always over a fixed algebraically closed field of 
characteristic zero.

\medskip
{\bf Acknowledgements.} 
   This research was partially supported 
   by the French grants ANR-12-JS01-0002-01 SUSI, Labex CEMPI 
   ANR-11-LABX-0007-01 
   and the Spanish grants  MTM2012-36917-C03-0, MTM2013-45710-C2-2-P,  
   MTM2016-76868-C2-1-P, MTM2016-80659-P.
   We are grateful to Herwig Hauser and Hana Kov\'a\v{c}ov\'a for 
  their translation of parts of Stolz' paper. The third-named author is grateful to Micka\"el 
  Matusinski for the opportunity to explain our first proof of the Halphen-Stolz 
  theorem at the Geometry seminar of the University of Bordeaux. 
  We thank him and the anonymous referee for their remarks on a previous version of this article,  
  which allowed us to improve our presentation.  We also thank Antonio Campillo for the 
  information he sent us about the uses of Abhyankar-Zariski inversion.

\section{Newton-Puiseux series and their characteristic exponents}  \label{sec:NPThm}

In this section we introduce the notations and vocabulary about power series 
with integral or rational exponents which will be used throughout the text. 
Among the series with rational exponents, we will be interested only in those 
with bounded denominators, called \emph{Newton-Puiseux series}. 
We conclude the section introducing the well-known notion 
of \emph{characteristic exponents} of a Newton-Puiseux series. 
\medskip

Throughout the paper $\mathbb{N}$ denotes the set of non-negative integers,  
$\N^*$ denotes  $\N \setminus \{0\}$ and 
 $\C$ denotes a fixed algebraically closed field of characteristic zero.

The following definition explains the basic vocabulary and notations about power 
series with integer exponents which will be used in the sequel: 

\begin{definition}  \label{def:entire}
    The ring $\C[[t]]$ of {\bf entire series} consists of the formal power series in the 
    variable $t$, with coefficients in $\C$ and 
    exponents in  $\N$. We say that the elements of its field of fractions $\C((t))$ are 
    {\bf meromorphic series}.  They are exactly the series with coefficients in $\C$, 
    exponents in $\Z$ and a finite number of terms with negative exponents. 
    If $\eta \in \C((t))$ and $m \in \Z$, we denote by  $[\eta]_m \in \C$
  the coefficient  of the monomial $t^m$ in $\eta(t)$ and by   $ \Supp(\eta) \subset \Z$
     its {\bf support},  consisting of the 
     exponents $m$ with non-zero coefficients $[\eta]_m$. 
\end{definition}

Therefore, a meromorphic series $\eta\in \C((t))$ may be written as:
   \[   \eta(t) = \sum_{m \in \Supp(\eta)}   [\eta]_m \:   t^m. \]
   
We will also use series with \emph{rational} exponents, but \emph{such that their 
support has bounded denominators}:

\begin{definition}   \label{def:NPseries}
    A {\bf Newton-Puiseux series} $\psi$ 
    in the variable $x$ is a power series of the form 
   $\eta(x^{1/n})$, where $\eta(t) \in \C[[t]]$  and   $n \in \N^*$. 
    For a fixed $n \in \N^*$, they form the ring $\C[[x^{1/n}]]$. 
    Its field of fractions is denoted $\C((x^{1/n}))$.
  \end{definition}
  
  One extends immediately to Newton-Puiseux series $\psi$ the notion of support (which 
  is a set with bounded denominators in the sense of Definition \ref{defnumsem}) 
  and the notation $[\psi]_m$ for their coefficients (where now $m \in \Q_+$). 
 
  Denote by: 
   \[ \C[[ x^{1/ \N} ]]:= \bigcup_{n \in \N^*} \C[[ x^{1/n}]] \]
   the local $\C$-algebra of Newton-Puiseux series in the variable $x$.

 The algebra  $\C[[ x^{1/ \N} ]] $  is endowed with the natural {\bf order} valuation:
    \[\mbox{ord}_x :  \C[[ x^{1/ \N} ]] \longrightarrow \Q_+\cup \{ \infty \}\]
      which associates to each series $\psi = \eta(x^{1/n}) \in \C [[ x^{1/n} ]]$
       the minimum of its support. The {\bf dominating coefficient} of a 
       Newton-Puiseux  series  $\psi$ is  the coefficient of 
       its term of exponent $\mbox{ord}_x(\psi)$.

 The field of fractions of the ring $\C[[ x^{1/ \N} ]]$ of  Newton-Puiseux series is:
   \[   \C(( x^{1/ \N})) := \bigcup_{n \in \N^*} \C((x^{1/n})). \]
 
 One has the following fundamental theorem 
 (see for instance, \cite[Chapter 5.1]{DJP 00},  \cite[Chapter 7]{F 01}, 
 \cite[Chapter 2.1]{C 04} or \cite[Chapter 2]{W 04} 
 for a proof), which explains 
 the reason why we need to work with Newton-Puiseux series even if we are interested 
 primarily in series with integral exponents:
 
 \begin{theorem}[{\bf The Newton-Puiseux theorem}]  \label{NPthm}
 $\,$
 
 \noindent
       Any monic reduced polynomial $f \in \C[[x]] [y]$ of degree $n \in \N^*$ 
       has $n$ roots in $ \C[[ x^{1/ \N} ]]$. If $f$ is moreover irreducible, then those roots 
       are precisely the series of the form: 
       \begin{equation}   \label{eq: gal}
       		\psi_\rho := \eta(\rho \cdot x^{1/n}), 
       \end{equation} 
       where $\psi= \eta(x^{1/n}) \in  \C[[ x^{1/ n} ]]$ is any one of them and 
       $\rho \in \C^*$ varies among the multiplicative subgroup $G_n$ of  $(\C^*, \cdot)$ 
       of $n$-th roots of $1 \in \C^*$.
 \end{theorem}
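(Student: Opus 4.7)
The plan is to derive the theorem in three stages: algebraic closure of an ambient field, non-negativity of the exponents of roots, and identification of the Galois orbit with the full root set.

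First, I would establish the key auxiliary fact that \emph{the field $\C((x^{1/\N}))$ is algebraically closed}. The workhorse is Newton's polygon algorithm. Given a monic polynomial $P(y) \in \C((x^{1/\N}))[y]$ of positive degree, I would consider its lower-left Newton polygon in the $(k,\mathrm{ord}_x)$-plane, where $k$ indexes the power of $y$. Each compact edge has some rational slope $-\alpha$, and the monomials sitting on it define, after dividing out the appropriate power of $x$, a non-constant polynomial in one variable over $\C$. Algebraic closure of $\C$ furnishes a non-zero root $c_1$, yielding a candidate leading term $c_1 x^{\alpha_1}$. Substituting $y = c_1 x^{\alpha_1} + y_1$ and iterating produces a formal series $\psi = \sum_i c_i x^{\alpha_i}$ with strictly increasing rational exponents. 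The subtle point, and what I expect to be the main obstacle, is to check that the denominators of the $\alpha_i$ stay globally bounded so that $\psi$ actually lies in $\C((x^{1/\N}))$; the bound is forced by the fact that after the first edge of slope with denominator $q$, the polynomials produced at all subsequent steps have their Newton polygons constrained to lattices refined by $1/q$ times something controlled by the original degree.

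Second, under the hypothesis $f \in \C[[x]][y]$, monic and reduced of degree $n$, algebraic closure supplies $n$ roots in $\C((x^{1/\N}))$ counted with multiplicity, and the reduced hypothesis ensures they are distinct since $f$ and $\partial_y f$ share no factor. I would then show every root $\psi$ satisfies $\mathrm{ord}_x(\psi) \ge 0$: if $\mathrm{ord}_x(\psi) < 0$, then in the relation $\psi^n = -\sum_{k<n} a_k(x)\psi^k$ with $a_k \in \C[[x]]$, the left-hand side has $x$-order $n\,\mathrm{ord}_x(\psi)$, which is strictly smaller than the order $k\,\mathrm{ord}_x(\psi) \le \mathrm{ord}_x(a_k \psi^k)$ of each summand on the right, which is impossible. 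Hence all $n$ roots lie in $\C[[x^{1/\N}]]$.

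Third, suppose $f$ is irreducible of degree $n$ and let $\psi = \eta(x^{1/n}) \in \C[[x^{1/n}]]$ be one of its roots. The Galois group of $\C((x^{1/n}))/\C((x))$ acts via $x^{1/n} \mapsto \rho x^{1/n}$ for $\rho \in G_n$, and the polynomial $\prod_{\rho \in G_n}\bigl(y - \eta(\rho x^{1/n})\bigr)$ has coefficients invariant under this action, hence lies in $\C((x))[y]$, and in fact in $\C[[x]][y]$ by the second stage; each conjugate $\eta(\rho x^{1/n})$ is therefore a root of $f$. If the integer $n$ were not minimal with $\psi \in \C[[x^{1/n}]]$, then $\psi$ would belong to $\C[[x^{1/d}]]$ for some proper divisor $d \mid n$, and its minimal polynomial over $\C((x))$ would have degree at most $d < n$, contradicting the irreducibility of $f$. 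This minimality forces the $n$ conjugates $\psi_\rho$ to be pairwise distinct, and having exhibited $n$ distinct roots of the degree-$n$ polynomial $f$, we have found them all.
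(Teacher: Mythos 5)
You should note first that the paper does not prove this theorem: it cites it as a classical fact with references to several textbooks. So there is no ``paper's proof'' against which to match, and you are correctly reaching for the standard route --- algebraic closedness of $\C((x^{1/\N}))$ via the Newton polygon, nonnegativity of orders, then Galois theory --- used in essentially all of those references. Your Stage 1 is only a sketch of the bounded-denominator argument, but you acknowledge this, and it is indeed the known technical crux of the algorithm. Stage 2 is correct as stated.

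There is, however, a genuine gap at the opening of Stage 3. You write ``let $\psi = \eta(x^{1/n}) \in \C[[x^{1/n}]]$ be one of its roots,'' but the membership $\psi \in \C[[x^{1/n}]]$ (as opposed to $\C[[x^{1/N}]]$ for some a priori unrelated $N$) is precisely part of what the theorem asserts, and Stages 1--2 deliver only $\psi \in \C[[x^{1/\N}]]$. Your minimality argument deduces that $n$ is minimal \emph{given} $\psi \in \C[[x^{1/n}]]$, but that membership itself is never established. To fill the gap: let $N$ be the smallest integer with $\psi \in \C[[x^{1/N}]]$. Minimality of $N$ forces the $N$ conjugates $\psi_\rho$, $\rho \in G_N$, to be pairwise distinct, since a coincidence $\psi_\rho = \psi_{\rho'}$ with $\rho \ne \rho'$ would place $\psi$ in $\C[[x^{1/N'}]]$ for a proper divisor $N'$ of $N$. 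Since $G_N$ fixes $\C((x))$ pointwise and $f(\psi)=0$, the entire orbit $\{\psi_\rho\}$ consists of roots of $f$, hence $n = \deg f \ge N$; conversely the Galois-invariant product $\prod_{\rho\in G_N}\bigl(y-\psi_\rho\bigr)\in\C[[x]][y]$ has degree $N$ and shares the root $\psi$ with the irreducible $f$, so $f$ divides it and $n \le N$. Thus $N = n$, which gives $\psi \in \C[[x^{1/n}]]$ and lets your Stage 3 proceed. Once this is inserted you can also drop the separate minimality detour at the end, since the pairwise distinctness of the $\psi_\rho$ has already been secured.
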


\begin{remark}  \label{Galact}
If $f \in \C[[x]][y]$ is a monic irreducible polynomial of 
degree $n \in \N^*$and if $\psi \in  \C(( x^{1/ n})) $ is a root  of $f$, 
then the field extension:  
\[
\C ((x)) \: \subset \: \C((x))[y] /(f) \:  \simeq  \: \C((x))[\psi]  \: = \: \C(( x^{1/n})) 
\]
is Galois. Its Galois group is isomorphic to the 
group  $G_n$, acting on $\C(( x^{1/n}))$ by: 
\begin{equation}   \label{eq: Galois}
         (\rho, x^{1/n}) \to \rho \cdot x^{1/n}, \mbox{ for all } \rho \in G_n.
 \end{equation}
The series $\psi_\rho$ in Theorem \ref{NPthm} are precisely the conjugates of $\psi$ 
under this action. 
\end{remark}
 
 Given a Newton-Puiseux series $\psi$, there exists an infinite number of choices 
 of $n \in \N^*$ such that $\psi \in \C[[x^{1/n}]]$. This is simply due to the fact 
 that $\C[[x^{1/n}]]  \subset \C[[x^{1/m}]]$ whenever $n$ divides $m$. One may 
 get nevertheless a canonical choice of $n \in \N^*$ by asking it to be \emph{minimal}: 
 
   \begin{definition}  \label{def: prim} 
       If $\psi \in \C (( x^{1/\N} ))$, a representation $\psi = \eta (x^{1/n})$ 
       with $\eta(t) \in \C[[t]]$ and $n \in \N^*$ is called \textbf{primitive} if 
          $n$ is the lowest common denominator of the exponents of $\psi$. 
     \end{definition}
        
     \begin{example} \label{exconj}
         Assume that $\psi = x^{5/2} +  x^{8/3}$. Then $\psi= \eta(x^{1/6})$, with 
         $\eta(t) = t^{15} +   t^{16} $. This defines a primitive representation of $\psi$. 
         Writing now 
       $\psi= \eta_1(x^{1/12})$ with $\eta_1(t) = t^{30} + t^{32}$, 
       one gets a non-primitive representation.
        Let us consider a $6$-th root of unity $\rho \in \C^*$. Then:  
         $$ \psi_\rho = \eta(\rho \: x^{1/6}) = 
             \rho^{15}  x^{5/2} + \rho^{16} x^{8/3} = 
                \rho^{3}  x^{5/2} + \rho^{4} x^{8/3}.$$
     \end{example}
          
Among the exponents of a Newton-Puiseux series, several are distinguished by looking 
at the way they may be written as quotients of coprime integers: 

\begin{definition}  \label{DEFcharexp}
    Let  $\psi \in \C[[ x^{1/ \N} ]]$ be a nonzero Newton-Puiseux series with zero constant term.  
    The set $ \cE(\psi)$ of \textbf{characteristic exponents} of $\psi$ 
    consists of the elements of the 
    support of $\psi$ which, when written as quotients of integers,  need a denominator 
    strictly bigger than the lowest common denominator of the previous exponents. That is:
     \[ 
     \cE(\psi) := \left\{ l \in \Supp(\psi) \:  | \:  N_{l} \cdot l \notin \Z \right\},
   \mbox{ where }  N_{l} := \min \left\{N \in \N^* \: |  \:  
         \left( \Supp(\psi) \cap [0, l) \right) \subset \dfrac{1}{N} \Z   \right\}.
      \]
   The {\bf sequence of characteristic exponents} of $\psi \in \C[[ x^{1/ \N} ]]$ is defined by 
   writing the elements of $ \cE(\psi)$ in increasing order.
\end{definition}

 \begin{example} \label{exchar}
     Both Newton-Puiseux series $x^{5/2} + x^{8/3}$ and 
         $2x - x^{5/2} + x^{8/3} - 3 x^{7/2} + x^{23/6}$
   have  the same set $\{5/2, 8/3\}$  of characteristic exponents.
   \end{example}
   
   \begin{remark} \label{rem:chardef}
   According to Enriques  and Chisini \cite[page 375]{EC 17}, it was Smith \cite{S 73} and 
Halphen \cite{H 76} who discovered in the years 1870 that  special exponents of a 
Newton-Puiseux series are particularly important if one wants to compute the 
intersection number of two plane branches starting from their Newton-Puiseux series. 
This information was repeated by Zariski \cite[Ch. 1]{Z 35}, 
but without citing anymore their precise papers. Those special exponents were 
called \emph{characteristic} by Halphen in \cite[Sect. 1.1]{H 76}, which is also the paper in which 
he stated his inversion theorem for both exponents and coefficients. This denomination 
remained, but with slightly variable meanings (see also Remarks \ref{charseq} and  \ref{remterm}). 
Let us mention that Smith did not name those special exponents (which he defined in 
\cite[Sect. 8]{S 73}). 
  \end{remark}

The set $\cE(\psi)$ of characteristic exponents of $\psi$
is necessarily finite, even if the series has infinite support. 
More precisely, if $\psi \in \C[[ x^{1/ n} ]]$, then $\cE(\psi)$ 
has at most as many elements as the number of factors of the prime 
factorisation of $n$. The set $ \cE(\psi)$ may also be characterized 
using the Galois action, as the set of orders $\mbox{ord}_x(\psi_{\rho} - \psi)$, 
when $\rho$ varies in $G_n \setminus \{ 1 \}$ (see for instance \cite[Prop. 4.13]{W 04}). 
Note that, because all the conjugates $\psi_{\rho}$  have the same support 
(by the explicit description of the Galois action recalled in Remark \ref{Galact}), 
they also have the same set of characteristic exponents, a fact implicitly used in Definition 
\ref{defcharexpbranch} below.

\medskip

One has the following particular case of the \emph{Weierstrass preparation theorem} 
(see for instance \cite[Chap. 3.2]{DJP 00} or \cite[Chap. 6]{F 01}): 

\begin{theorem}  \label{thmunimon}

Let $f \in \C [[ x, y]]$ be a series such that $\mathrm{ord}_y ( f(0,y) ) = n \in \N^*$. 
Then, there exist a unique monic polynomial $ F \in \C [[x]][y]$ of degree $n$ 
and a unique unit $\epsilon \in \C [[x, y]]$ such that: 
\begin{equation} \label{eq:W}
       f = \epsilon \cdot F.
\end{equation} 
In addition, $f$ is irreducible in $\C [[ x, y]]$ if and only if the polynomial $F$ is irreducible 
in $\C [[x]][y]$. 
\end{theorem}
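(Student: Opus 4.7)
The plan is to deduce the theorem from the \emph{Weierstrass Division Theorem}, which I would establish first as a lemma: for every $g \in \C[[x,y]]$, there exist unique $q \in \C[[x,y]]$ and $r \in \C[[x]][y]$ with $\deg_y r < n$ such that $g = q f + r$. I would prove this lemma by a contraction-mapping argument in the $(x)$-adic topology on $\C[[x,y]]$. The hypothesis $\mathrm{ord}_y(f(0,y)) = n$ gives $f(0,y) = u(y)\cdot y^n$ with $u(y) \in \C[[y]]$ a unit, so the division problem can be solved modulo $x$ by a truncation-and-inversion procedure; the resulting recursion converges in the complete ring $\C[[x,y]]$, and provides both existence and uniqueness of $(q,r)$.

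Granted the division lemma, existence in (\ref{eq:W}) follows by dividing $y^n$ by $f$: one obtains $y^n = q f + r$ with $q \in \C[[x,y]]$ and $r \in \C[[x]][y]$ of $y$-degree $<n$. Setting $F := y^n - r$ gives $q f = F$, a monic polynomial of degree $n$ in $y$. Specializing at $x=0$ yields $y^n = q(0,y)\, u(y)\, y^n + r(0,y)$; comparing $y$-degrees, the contribution $r(0,y)$ has degree strictly less than $n$ while the other summand has $y$-order at least $n$, which forces $r(0,y)=0$ and $q(0,y) u(y) = 1$. In particular $q(0,0) = u(0)^{-1} \neq 0$, so $q$ is a unit; taking $\epsilon := q^{-1}$ yields the required factorization, and $F$ is a Weierstrass polynomial since $F(0,y) = y^n - r(0,y) = y^n$. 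For uniqueness, any decomposition $f = \epsilon' F'$ with $\epsilon'$ a unit and $F'$ a monic polynomial of degree $n$ with $F'(0,y)=y^n$ produces $y^n = (\epsilon')^{-1} f + (y^n - F')$ with $\deg_y(y^n - F') < n$; the uniqueness clause of the division lemma then identifies this with the previous decomposition, forcing $\epsilon' = \epsilon$ and $F' = F$.

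For the irreducibility equivalence, I would argue in both directions using the uniqueness just proved. A non-trivial factorization $F = G_1 G_2$ in $\C[[x]][y]$ with both $G_i$ monic of positive $y$-degree gives $f = (\epsilon G_1)\, G_2$ in $\C[[x,y]]$; each $G_i$ is a non-unit there, since $G_1(0,y) G_2(0,y) = F(0,y) = y^n$ together with monicity forces $G_i(0,y) = y^{n_i}$ with $n_i > 0$, hence $G_i(0,0) = 0$. Conversely, any non-trivial factorization $f = h_1 h_2$ in $\C[[x,y]]$ satisfies $\mathrm{ord}_y(h_i(0,y)) = n_i > 0$ with $n_1 + n_2 = n$; applying the preparation part of the theorem to each $h_i$ produces Weierstrass polynomials $F_i$ of degree $n_i$ whose product is again a Weierstrass polynomial of degree $n$ satisfying $f = (\epsilon_1 \epsilon_2)(F_1 F_2)$, so by uniqueness $F = F_1 F_2$ is reducible.

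The main obstacle is the proof of the division lemma itself; everything afterwards is bookkeeping from uniqueness. The delicate point there is the convergence of the formal $(x)$-adic iteration, which relies essentially on the regularity hypothesis $\mathrm{ord}_y(f(0,y)) = n$: this is precisely what allows the leading $y$-coefficient to be inverted at every stage of the recursion, so that each increment of the approximation is well-defined and lies in a strictly higher power of $(x)$.
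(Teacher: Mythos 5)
The paper does not give a proof of this theorem; it is stated as a particular case of the Weierstrass preparation theorem with references to standard textbooks (de Jong--Pfister, Fischer). Your proposal follows exactly the classical route used in those references --- establish Weierstrass division, then obtain the preparation by dividing $y^n$ by $f$, and deduce the irreducibility equivalence from the uniqueness clause --- and the argument is correct throughout.
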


This theorem  allows us to introduce the following vocabulary:

\begin{definition} \label{defcharexpbranch}  
  Let  $f \in \C [[ x, y]]$ be an irreducible series such that $\mathrm{ord}_y f(0,y) = n \in \N^*$. 
  The polynomial $F \in \C [[x]][y]$ provided by Theorem \ref{thmunimon} is called the 
   \textbf{Weierstrass polynomial} associated to the series $f \in \C [[ x, y]]$ relative to $x$.
    Then, the {\bf Newton-Puiseux series of $f$ relative to $x$} are the roots of the associated 
    Weierstrass polynomial $F \in \C[[x]][y]$, in the ring $ \C[[x^{1/\N}]]$.
   If $f$ is irreducible, then its  {\bf characteristic exponents} relative to $x$ are the 
   characteristic exponents of any one of those roots. 
\end{definition}

\begin{remark} \label{charseq}
Let us explain how the previous algebraic notions apply in the geometrical setting of a branch, 
an irreducible  germ  of complex analytic plane curve $C$ 
on a germ $S$ of  smooth complex analytic surface. 
Choose a local system of coordinates $(x,y)$ on $S$. 
Then, the branch $C$ is defined by an irreducible series $f  \in \mathbb{C} [[ x, y ]]$.
If the $y$-axis is transversal to the tangent line of the branch $C$, then one may show 
that the characteristic exponents 
of $f$ relative to $x$ are always the same (see for instance 
\cite[Thm. 3.5.6]{W 04}). One speaks in this case about {\em generic 
characteristic exponents}. The notations used for them by Zariski in 
\cite[Ch. 1]{Z 35}, \cite[Sect. 3]{Z 68} 
and \cite[Sect. II.3]{Z 86} are common nowadays: 
$\left(\frac{m_1}{n_1}, \frac{m_2}{n_1 n_2}, \dots , \frac{m_g}{n_1 \cdots n_g}\right)$. 
At least since \cite[Sect. 3]{Z 68},  Zariski uses also a 
\emph{characteristic sequence} $(\beta_0, \beta_1, ..., \beta_g)$ of natural 
numbers instead of the sequence of generic characteristic exponents, 
which may then be reconstructed as  $\left(\frac{\beta_1}{\beta_0}, ..., 
\frac{\beta_g}{\beta_0} \right)$ (here   $ \beta_0$ is the multiplicity of 
the branch $C$, that is, the minimal degree of the monomials of $f(x,y)$). 
We do not use the previous notations in this paper for two reasons: on one side 
we never need a genericity hypothesis on the coordinate system relative to $C$ 
and on the other side we find the related notion of essential exponent relative to $1$ 
(see Definition \ref{essell}) better suited to a simple formulation of the 
Halphen-Stolz inversion theorem (see Remark \ref{compcharess}). 
\end{remark}

\section{A calculus for the irreducible terms of invertible entire series}
\label{sec:irred}

In this section we introduce several notions allowing to study the supports of 
Newton-Puiseux series and the semigroups generated by them: their 
\emph{irreducible elements} (see Definition \ref{defindec}) 
and their \emph{essential elements} (see Definition \ref{essell}) relative to 
an arbitrary natural number. We concentrate then on the entire series with non-zero  
constant terms.  If $\phi$ is such a series, we introduce its \emph{dual} $\widecheck{\phi}$ 
 and we show that the coefficients of the monomials  with \emph{irreducible} exponents 
 in the positive integral powers of $\phi$ and in the dual 
 $\widecheck{\phi}$ may be deduced from those of $\phi$ 
 by simple formulae (see Proposition \ref{esspowers}). 
 In Section \ref{sect:applinv} we will apply those formulae to the \emph{essential} 
 exponents relative to well-chosen natural numbers, in order to prove the 
 Halphen-Stolz inversion theorem. 
  \medskip

Next definition introduces vocabulary about the sets of rational numbers
which may appear as supports of Newton-Puiseux series:
  
  \begin{definition} \label{defnumsem}
      A {\bf set with bounded denominators} is  a non-empty (possibly infinite) 
      set $E \subset \Q$ such that there exists $n \in \N^*$ with $E \subset \dfrac{1}{n} \N$. 
     We denote 
      by $\N^* \: E \subset \Q_+$ the {\bf semigroup generated by $E$}, 
     that is, the set of \emph{non-empty} finite 
     sums of elements of $E$. Analogously, we denote by $\Z \: E \subset \Q$ 
     the {\bf group  generated by $E$}.  
  \end{definition}
  
  Note that the semigroup $\N^* \: E$ contains $0$ (that is, it is a monoid for addition)
  if and only if $E$ does.

  Given a set with bounded denominators, we will be interested 
  in its \emph{irreducible elements}:
     
  \begin{definition} \label{defindec}   
     Assume that $E \: \subset \Q_+$ is a set with bounded denominators. 
     We denote by $\Irr(E)$ the set of {\bf irreducible elements} of 
     $E$, that is, the subset of $E$ formed by those elements which cannot 
     be written as sums of at least two elements of $E\setminus \{0\}$. The elements 
     of $E$ which are not irreducible are called {\bf reducible}. \index{reducible elements}
      If $E$ is the support $\Supp(\psi)$ of a Newton-Puiseux series $\psi$, then we 
        write also $\Irr(\psi) := \Irr(\Supp(\psi))$, and we call it the {\bf set of irreducible  
        exponents of $\psi$}.  
  \end{definition}
  
  \begin{remark} \label{minirr} 
     Let $E  \subset \Q_+$ be a set with bounded denominators.
     Notice that if 
     $E$ contains $0$, then $0$ is by definition an irreducible element of $E$. 
     More generally, the minimum of $E$ is always irreducible in $E$. 
  \end{remark}
  
  \begin{example}  \label{exirred}
      Assume that $E = \{6, 15, 16, 21, 23 \}$. Then  $ \Irr(E) = \{ 6 , 15, 16, 23 \}$.
       Note that $21$ is reducible in $E$, because it is equal to the sum $6 +  15$ and $6, 15 \in E$. 
  \end{example}
  
  The sets of irreducible elements of $E$ and of the semigroup it generates coincide: 

\begin{lemma} \label{twoind} 
    Assume that $E \subset \Q_+$ 
    is a set with bounded denominators. Then: 
      \begin{enumerate} 
           \item $\Irr(E) = \Irr(\N^*\: E)$ and this set is the minimal 
    generating set of the semigroup $\N^* \: E$, relative to the inclusion partial order 
    on the set of its generating sets. 
           \item The set $\Irr(E)$ is finite. 
       \end{enumerate}
\end{lemma}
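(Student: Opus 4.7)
The plan is to handle parts (1) and (2) sequentially. For (1), I first verify the set equality $\Irr(E) = \Irr(\N^* E)$ by a symmetric analysis of reducibility, then derive that $\Irr(\N^* E)$ simultaneously generates $\N^* E$ and is contained in every generating set of $\N^* E$, yielding minimality. For (2), I rescale to reduce to the case $E \subseteq \N$ and then bound $|\Irr(E)|$ via residue classes modulo the smallest positive element of $E$.

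For the equality $\Irr(E) = \Irr(\N^* E)$, the key observation is that every $s \in \N^* E$ is by definition a non-empty sum of elements of $E$, so any length-$\geq 2$ decomposition of $s$ in $\N^* E$ can be refined into a length-$\geq 2$ decomposition of $s$ in $E \setminus \{0\}$ by expanding each summand into its $E$-decomposition and discarding zero terms (the case $s = 0$ is trivial, since $0$ is always irreducible by Remark \ref{minirr}). This transfers reducibility in both directions, and in particular forces $\Irr(\N^* E) \subseteq E$. For minimality, I would (i) show $\Irr(\N^* E)$ generates $\N^* E$ by strong induction on the integer $n s$ after clearing a common denominator $n \in \N^*$ with $E \subseteq \frac{1}{n} \N$: a reducible $s \in \N^* E$ splits as $s = a + b$ with $a, b \in \N^* E \setminus \{0\}$ and hence $n a, n b < n s$, so $a$ and $b$, and therefore $s$, are sums of irreducibles by the induction hypothesis; and (ii) show any generating set $G \subseteq \N^* E$ contains $\Irr(\N^* E)$ by observing that any expression of $s \in \Irr(\N^* E)$ as a non-empty sum of elements of $G$ must, after discarding zeros, reduce to a single-term sum, so $s \in G$.

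For finiteness in (2), the map $e \mapsto n e$ gives a bijection $E \to n E \subseteq \N$ that intertwines semigroup structures and sends $\Irr(E)$ onto $\Irr(n E)$, so I may assume $E \subseteq \N$. Disposing of the trivial case $E \subseteq \{0\}$, set $m := \min(E \setminus \{0\})$. For each residue $r \in \{0, 1, \ldots, m-1\}$ such that $S_r := \{s \in \N^* E : s \equiv r \pmod{m}\}$ is nonempty, let $s_r := \min S_r$; any strictly larger $s \in S_r$ satisfies $s = s_r + k m$ for some $k \geq 1$, and since $s_r$ admits a non-empty decomposition in $E \setminus \{0\}$ whenever $s_r > 0$, concatenating it with $k$ copies of $m$ yields a decomposition of $s$ of length at least $2$, so $s$ is reducible. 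Thus $\Irr(\N^* E)$ is contained in the finite set $\{s_r : S_r \neq \emptyset\} \cup \{m\}$, proving $|\Irr(\N^* E)| \leq m + 1$. The main obstacle is the careful bookkeeping of the $s = 0$ and $s_r = 0$ edge cases throughout, but no deeper conceptual difficulty arises.
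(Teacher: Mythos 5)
Your proof is correct, and while it covers the same ground as the paper's, it follows a genuinely different route in part (2) and is actually more thorough in part (1). For the equality $\Irr(E)=\Irr(\N^*E)$ and the minimality inclusion, your argument is essentially the paper's (the paper dismisses the two inclusions as ``immediate'' and then runs exactly your argument (ii) that every generating set must contain $\Irr(\N^*E)$). What you add is step (i), the strong induction on $ns$ showing that $\Irr(\N^*E)$ actually \emph{is} a generating set; the paper omits this, implicitly treating it as standard, but your explicit argument is a welcome completion, since ``minimal generating set'' presupposes membership in the family of generating sets. For finiteness in (2), the two approaches diverge sharply. The paper first normalizes $E$ to a set of globally coprime natural numbers so that $\N^*E$ is a numerical semigroup with finite conductor $c$, and then bounds $\Irr(\N^*E)\subset\{0,\dots,2c-1\}$ by splitting any $l\geq 2c$ as $c+(l-c)$ with both parts in the semigroup. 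Your argument instead works residue class by residue class modulo $m=\min(E\setminus\{0\})$ (an Ap\'ery-set style argument): each class contains at most one irreducible, namely its least element, so $|\Irr(\N^*E)|\leq m+1$. Your route is more elementary in that it avoids invoking the finite-conductor theorem and does not need the coprimality normalization at all, only the clearing of denominators; the paper's route, in exchange, gives a bound expressed in terms of the conductor, a quantity more directly tied to the structure of numerical semigroups that reappears elsewhere in the subject.
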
  
  
  \begin{proof}
      Multiplying $E$ by a convenient rational number, we may 
      restrict to the sets $E \subset \N$ whose elements are globally coprime. 
      
      \begin{enumerate}
          \item Both inclusions between $\Irr(E)$ and $\Irr(\N^*\: E)$ are immediate 
       to check, therefore we will assume from now on that the two sets are equal. 
       
      Let us prove the minimality property of $\Irr(\N^*\: E)$. 
      Consider another generating set $A$ of $\N^*\: E$ 
       and $a \in \Irr(\N^*\: E)$. As $A$ is generating, $a$ may be written 
       as a sum of elements of $A$. 
       If this sum were non-trivial, then $a$ would not be irreducible in $\N^*\: E$. Therefore 
       $a \in A$, which shows the desired inclusion $\Irr(\N^*\: E) \subset A$. 
       
          \item In order to show that $\Irr(E)$ is finite, it is enough to show that 
       $ \Irr(\N^*\: E)$ is finite. The semigroup $\N^*\: E$ being generated by globally 
       coprime elements, it has \emph{finite conductor}, that is, there exists 
       $c \in \N$ such that all natural numbers greater than or equal to $c$ belong to $ \N^*\: E$ 
       (see \cite[page 82]{W 04}; in this case, the smallest such $c$ is called the \emph{conductor} 
       of the numerical semigroup $\N^*\: E$). 
       But this implies that $\Irr(\N^*\: E) \subset \{0, 1, ..., 2c-1 \}$. Indeed, 
       any element $l \geq 2c$ of the semigroup may be written in the form 
       $c  + d$, with  $d \geq c$, that is, as a non-trivial sum 
       of elements of the semigroup.   
      \end{enumerate}  
  \end{proof}

  We will be especially interested in particular sequences of irreducible elements of a 
  given set $E$ with bounded denominators:
  
  \begin{definition}  \label{essell} 
    Let us consider a set $E \subset \Q_+$ with bounded denominators and 
     an integer $p \in \N^*$. Then the {\bf sequence 
     $\Ess(E,p) := \left(\Ess(E,p)_l \right)_{l}$
     of essential elements of $E$ relative to $p$}  is defined 
     inductively by:
         \begin{itemize}
             \item $\Ess(E,p)_0 := \min E$. 
             \item If $l \geq 1$, then the term $\Ess(E,p)_l$ is defined if and only if 
                 $ E$ is not included in the group \linebreak 
                     $\Z\{p, \Ess(E,p)_0, ...,  \Ess(E,p)_{l -1} \}$. In this case: 
               \[  \Ess(E,p)_l := \min \left(E \setminus \Z\{p, \Ess(E,p)_0, \dots, \Ess(E,p)_{l -1}  \} \right).  \] 
         \end{itemize}
    \end{definition}
  
  The following basic property of this notion is a direct consequence of the definition:
  
  \begin{lemma}  \label{ess-P}
        Assume that $E \subset \Q_+$ has bounded denominators and take $p,q \in \N^*$. Then:
              $$ q \:  \Ess(E,p) = \Ess \left( q  E,  qp \right).$$
  \end{lemma}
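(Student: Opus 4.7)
The plan is to argue by induction on the index $l$, exploiting the fact that the multiplication-by-$q$ map $\mu_q \colon \mathbb{Q} \to \mathbb{Q}$ is a bijection (since $q \in \mathbb{N}^*$) and that it is strictly increasing (since $q > 0$). These two properties imply respectively that $\mu_q$ commutes with taking set-differences and with taking minima of subsets of $\mathbb{Q}$, which are the only operations involved in Definition \ref{essell}.

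First I would check the base case: $\mathrm{Ess}(qE, qp)_0 = \min(qE) = q \min(E) = q\, \mathrm{Ess}(E,p)_0$, using that $q > 0$. For the inductive step, suppose $\mathrm{Ess}(qE, qp)_k = q\, \mathrm{Ess}(E,p)_k$ for every $k < l$. The key algebraic identity I would use is
\[
\mathbb{Z}\{qp,\, q\,\mathrm{Ess}(E,p)_0,\, \dots,\, q\,\mathrm{Ess}(E,p)_{l-1}\} \: = \: q\, \mathbb{Z}\{p,\, \mathrm{Ess}(E,p)_0,\, \dots,\, \mathrm{Ess}(E,p)_{l-1}\},
\]
which is immediate since $\mathbb{Z}$-linear combinations commute with scalar multiplication by the integer $q$. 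Combining this with the injectivity of $\mu_q$ (which gives $qA \setminus qB = q(A \setminus B)$ for any $A, B \subset \mathbb{Q}$) and with the fact that $\mu_q$ preserves minima, the right-hand side of the inductive clause in Definition \ref{essell} applied to $(qE, qp)$ transforms exactly into $q$ times the right-hand side applied to $(E, p)$, yielding $\mathrm{Ess}(qE, qp)_l = q\, \mathrm{Ess}(E,p)_l$.

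It remains to verify that the two sequences have the same length, i.e., that $\mathrm{Ess}(qE, qp)_l$ is defined if and only if $\mathrm{Ess}(E,p)_l$ is. By the inductive hypothesis and the same identity above, the non-inclusion
\[
qE \: \not\subset\: \mathbb{Z}\{qp,\, q\,\mathrm{Ess}(E,p)_0,\, \dots,\, q\,\mathrm{Ess}(E,p)_{l-1}\}
\]
is equivalent, after dividing by $q$, to
\[
E \: \not\subset\: \mathbb{Z}\{p,\, \mathrm{Ess}(E,p)_0,\, \dots,\, \mathrm{Ess}(E,p)_{l-1}\},
\]
so both termination conditions coincide.

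There is no substantial obstacle here; the statement is essentially a compatibility of the definition with homothety, and the only point requiring a modicum of care is to keep track of the two parallel observations that $\mu_q$ respects set-differences (used injectivity) and minima (used positivity), and to handle the case where one sequence might \emph{a priori} terminate before the other — which the inductive step rules out.
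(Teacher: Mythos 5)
Your proof is correct, and it takes the approach the paper intends: the paper gives no argument, remarking only that the lemma is ``a direct consequence of the definition,'' and your induction on $l$ — using that multiplication by $q$ is a strictly increasing bijection, hence commutes with $\min$, with set-difference, and with taking the $\Z$-span — is precisely the direct verification that remark alludes to.
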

  
  One has also: 
  
  \begin{lemma}  \label{finess}
       Assume that $E \subset \Q_+$ has bounded denominators and that $p \in \N^*$. 
       Then the sequence of essential exponents $ \Ess(E,p)$ is finite. 
  \end{lemma}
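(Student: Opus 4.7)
The plan is to use the fact that ascending chains of subgroups of a finitely generated abelian group must stabilize.

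First, I would fix $n \in \N^*$ such that $E \subset \frac{1}{n}\Z$; this is possible precisely because $E$ has bounded denominators. Since $p$ is a positive integer, we also have $p \in \frac{1}{n}\Z$. Therefore, for every index $l$ for which $\Ess(E,p)_l$ is defined, the group
\[
G_l := \Z\{p,\, \Ess(E,p)_0,\, \dots,\, \Ess(E,p)_l\}
\]
is a subgroup of $\frac{1}{n}\Z$.

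Next, I would observe that the defining condition of $\Ess(E,p)_l$ forces $\Ess(E,p)_l \notin G_{l-1}$. Consequently $G_{l-1} \subsetneq G_l$, and the sequence $G_0 \subsetneq G_1 \subsetneq G_2 \subsetneq \cdots$ is strictly increasing. But $\frac{1}{n}\Z$ is isomorphic to $\Z$ as a $\Z$-module, hence Noetherian, so no strictly ascending chain of subgroups of $\frac{1}{n}\Z$ can be infinite. This forces the indexing of essential elements to terminate after finitely many steps.

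I expect no serious obstacle here: the only subtlety is to make sure that $p$ is really inserted into the generating set of $G_l$ for every $l$ (otherwise, for instance for $l=0$ the group might be too small to be meaningful), and to keep track of the fact that the strict containment $G_{l-1} \subsetneq G_l$ is automatic from the definition, even though the new element $\Ess(E,p)_l$ is merely required to lie outside the \emph{previous} group. One could also prove finiteness more concretely by writing each subgroup of $\frac{1}{n}\Z$ in the form $\frac{k}{n}\Z$ with $k \in \N^*$ and noting that strict inclusion $G_{l-1} \subsetneq G_l$ corresponds to a strict divisor relation $k_l \mid k_{l-1}$ with $k_l < k_{l-1}$, which cannot continue indefinitely; but the Noetherian argument is the cleanest.
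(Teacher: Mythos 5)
Your proof is correct and follows essentially the same route as the paper's: both rely on the strict ascending chain $G_{l-1} \subsetneq G_l$ of subgroups of a rank-one free $\Z$-module and invoke the Noetherian property to conclude. The only cosmetic difference is that the paper first scales $E$ (via Lemma \ref{ess-P}) to reduce to subgroups of $\Z$ itself, whereas you work directly inside $\frac{1}{n}\Z \cong \Z$.
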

  
  \begin{proof}
         Lemma \ref{ess-P} implies that it is enough to consider the case where 
         $E \subset \Z$. Then one has by definition the strict inclusions: 
              $$\Z\{p, \Ess(E,p)_0, \dots, \Ess(E,p)_{l  -1}  \}  \subsetneq 
                    \Z\{p, \Ess(E,p)_0, \dots, \Ess(E,p)_{l}  \}, \:  \mbox{ for all } l \geq 1 $$
          (for which the term $\Ess(E,p)_l $ is defined). Any ascending chain of subgroups 
           of $\Z$ being stationary, we deduce that the sequence of essential exponents 
           is finite. 
  \end{proof}

  \begin{example}  \label{exess}
     Let us consider again the set $E = \{6, 15, 16, 21, 23 \}$ from Example \ref{exirred}. Here are 
     its sequences of essential elements relative to the numbers $p \in \{1, ..., 12 \}$:
         $$ \left\{  \begin{array}{l}
                              \Ess(E, 1) = \Ess(E,5) = \Ess(E, 7) = \Ess(E, 11) = (6),  \\
                              \Ess(E,2) = \Ess(E,4) = \Ess(E, 8) = \Ess(E, 10) = (6, 15),   \\
                              \Ess(E,3) = \Ess(E, 9) = (6, 16),  \\
                              \Ess(E,6) = \Ess(E, 12) = (6, 15, 16). 
                          \end{array}  \right. $$
       Lemma \ref{ess-P} implies that: 
           $      \Ess \left( \left\{1, \frac{5}{2}, \frac{8}{3}, \frac{7}{2}, \frac{23}{6} \right\},1\right) 
                     = \frac{1}{6} \Ess(E, 6) = \left(1, \frac{5}{2}, \frac{8}{3} \right). $
       The set $\left\{1, \frac{5}{2}, \frac{8}{3}, \frac{7}{2}, \frac{23}{6} \right\}$ is 
       precisely the support of the second series considered in Example 
       \ref{exchar}, whose sequence of characteristic exponents 
       is $\left(\frac{5}{2}, \frac{8}{3} \right)$. 
       Note that its sequence of essential exponents relative to $1$ 
       may be obtained from the characteristic sequence by adjoining to it as initial term 
       the order of this series (which is in this case equal to $1$). 
       We will see in Lemma \ref{charess} that this is a general fact. 
  \end{example}

  The following lemma shows that the non-zero essential elements of a set $E$ 
  relative to any positive integer are irreducible elements of $E$:
  
  \begin{lemma}  \label{essind} 
     The essential elements of a set $E \subset \Q_+$ with bounded denominators 
     relative to a number $p \in \N^*$ are irreducible elements of $E$. 
  \end{lemma}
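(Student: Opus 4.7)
The plan is to argue by induction on the index $l$ of the essential element. The base case is handled immediately by Remark \ref{minirr}, which says that the minimum of a set with bounded denominators is always irreducible: since $\Ess(E,p)_0 = \min E$ by definition, it is automatically in $\Irr(E)$.

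For the inductive step, I would proceed by contradiction. Suppose $\Ess(E,p)_l$ is reducible for some $l \geq 1$, so that there exist $e_1, \dots, e_k \in E \setminus \{0\}$ with $k \geq 2$ satisfying
\[
\Ess(E,p)_l \;=\; e_1 + e_2 + \cdots + e_k.
\]
Because $E \subset \Q_+$ and each $e_i \neq 0$, every $e_i$ is strictly positive, so $e_i < \Ess(E,p)_l$ for every $i$ (the remaining summands contribute a strictly positive amount).

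Now apply the defining minimality property: $\Ess(E,p)_l$ is the \emph{smallest} element of $E$ that does not belong to the group $G_{l-1} := \Z\{p, \Ess(E,p)_0, \dots, \Ess(E,p)_{l-1}\}$. Since each $e_i$ lies in $E$ and is strictly smaller than $\Ess(E,p)_l$, it must belong to $G_{l-1}$. But $G_{l-1}$ is a group (hence closed under addition), so the sum $e_1 + \cdots + e_k = \Ess(E,p)_l$ also lies in $G_{l-1}$, contradicting the fact that $\Ess(E,p)_l \in E \setminus G_{l-1}$. Hence $\Ess(E,p)_l$ is irreducible in $E$.

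There is no real obstacle here; the only subtle point is ensuring $e_i < \Ess(E,p)_l$ strictly, which relies crucially on $k \geq 2$ together with positivity of the summands, and on the fact that the comparison set $G_{l-1}$ is a group, so sums of its elements remain inside it. The argument works uniformly for every $l \geq 1$, giving the full claim.
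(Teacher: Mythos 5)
Your argument is correct and is essentially the paper's own proof: the base case uses that $\min E$ is irreducible, and for $l \geq 1$ a nontrivial decomposition would consist of elements strictly smaller than $\Ess(E,p)_l$, hence lying in the group $\Z\{p, \Ess(E,p)_0, \dots, \Ess(E,p)_{l-1}\}$, forcing $\Ess(E,p)_l$ into that group as well, a contradiction. One cosmetic remark: although you frame the step as an induction, the inductive hypothesis (irreducibility of earlier $\Ess(E,p)_j$) is never used; the argument is a direct one for each $l \geq 1$, exactly as in the paper.
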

  
  \begin{proof} 
     If $\Ess(E,p) = \left( \epsilon_0, \dots, \epsilon_d \right) $, then  we have that 
  $\epsilon_0 = \min E$, which is an irreducible element of $E$. 
  
 \noindent Let us show that the property is also true for 
     $\epsilon_l$, where $l \geq 1$. If $\epsilon_l$ was reducible, 
     then it would be a non-trivial sum of elements of $E$, which would therefore be strictly 
     less than $\epsilon_l$. By the definition of $\epsilon_l$, the terms of this sum  
     would belong to the  group  $\Z\{p, \epsilon_0, \dots, \epsilon_{l-1} \}$.  
     This would imply that  $\epsilon_l$ belongs also to this group, which 
     contradicts Definition \ref{essell}. 
  \end{proof}

\begin{lemma} \label{eqess1}

Assume that $E \subset \Q_+$ has bounded denominators and that $p \in \N^*$. 
       Then we have the following equality of essential sequences:
       \[
        \Ess (E,p) = \Ess ( \Irr(E), p).
        \]
\end{lemma}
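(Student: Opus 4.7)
The plan is to prove the equality of the two essential sequences term by term, by induction on the index $l$, showing simultaneously that both sequences have the same length.

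For the base case $l=0$, we have $\Ess(E,p)_0 = \min E$. By Remark \ref{minirr}, $\min E$ belongs to $\Irr(E)$, so $\min E = \min \Irr(E) = \Ess(\Irr(E),p)_0$. For the inductive step, assume that $\Ess(E,p)_j = \Ess(\Irr(E),p)_j$ for $j = 0, \dots, l-1$, and denote the common value by $\epsilon_j$. Set $G := \Z\{p, \epsilon_0, \dots, \epsilon_{l-1}\}$. We must show that $E \setminus G$ is nonempty if and only if $\Irr(E) \setminus G$ is, and that in this case the two sets have the same minimum.

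For the statement about nonemptyness, the implication $\Irr(E) \not\subset G \Rightarrow E \not\subset G$ is immediate from $\Irr(E) \subset E$. Conversely, suppose $\Irr(E) \subset G$. By Lemma \ref{twoind}, every element of $E \subset \N^*E$ is a nonempty finite sum of elements of $\Irr(E)$; since $G$ is a subgroup of $\Q$, it is closed under such sums, hence $E \subset G$. This shows that the two sequences are defined at step $l$ for exactly the same values of $l$.

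Now assume both sets $E \setminus G$ and $\Irr(E) \setminus G$ are nonempty. The inclusion $\Irr(E) \setminus G \subset E \setminus G$ gives $\min(E \setminus G) \leq \min(\Irr(E) \setminus G)$. For the reverse inequality, let $m := \min(E \setminus G) = \Ess(E,p)_l$. By Lemma \ref{essind}, $m \in \Irr(E)$, and of course $m \notin G$, so $m \in \Irr(E) \setminus G$, hence $\min(\Irr(E) \setminus G) \leq m$. Combining both inequalities gives $\Ess(E,p)_l = \Ess(\Irr(E),p)_l$, completing the induction.

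The only subtle step is the containment $\Irr(E) \subset G \Rightarrow E \subset G$, which is what makes Lemma \ref{twoind} essential to the argument; everything else follows directly from Definition \ref{essell}, Remark \ref{minirr} and Lemma \ref{essind}.
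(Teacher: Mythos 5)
Your proof is correct and follows the same inductive scheme as the paper: at each step, the inequality $\min(E\setminus G)\leq\min(\Irr(E)\setminus G)$ comes from the trivial inclusion $\Irr(E)\subset E$, and the reverse inequality comes from Lemma~\ref{essind}, which says that the essential exponent $\Ess(E,p)_l$ is irreducible and therefore lies in $\Irr(E)\setminus G$.

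The one place you diverge from the paper is in handling the equal length of the two sequences. You invoke Lemma~\ref{twoind} to get the implication $\Irr(E)\subset G\Rightarrow E\subset G$ (since $\Irr(E)$ generates the semigroup $\N^*E\supset E$ and $G$ is a group). This is correct but a bit heavier than necessary. The paper avoids Lemma~\ref{twoind} entirely: it shows $d\leq d'$ during the induction (because $\epsilon_l\in\Irr(E)\setminus G$ forces $\epsilon'_l$ to be defined whenever $\epsilon_l$ is), and then gets $d'\leq d$ from the trivial inclusion $\Irr(E)\subset E\subset\Z\{p,\epsilon_0,\dots,\epsilon_d\}$. In fact the implication you want from Lemma~\ref{twoind} is already a consequence of Lemma~\ref{essind} alone: if $E\not\subset G$, then $\min(E\setminus G)$ is an essential exponent of $E$, hence irreducible, hence lies in $\Irr(E)\setminus G$, so $\Irr(E)\not\subset G$. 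So your argument can be streamlined to match the paper's, dropping the dependence on Lemma~\ref{twoind}.
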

\begin{proof}  

If $\Ess (E, p) = (\epsilon_0, \dots, \epsilon_d)$ and 
$\Ess( \Irr(E), p) = (\epsilon_0', \dots, \epsilon_{d'}')$, 
then Remark \ref{minirr} and Definition \ref{essell} imply that:  
\[
\epsilon_0 = \min E = \min \Irr (E) = \epsilon_0'.
\]
Assume by induction that $\epsilon_0 = \epsilon_0', \dots, \epsilon_{l-1} =\epsilon_{l-1}'$, 
for $1 \leq l <d$. Then we get:  
\[ \epsilon_{l}' =  \min (\Irr(E) \setminus \Z \{ p, \epsilon_0, \dots, \epsilon_{l-1} \}) \leq \epsilon_l =  
\min (E \setminus \Z \{ p, \epsilon_0, \dots, \epsilon_{l-1} \}), \]
since $\epsilon_l$ is an irreducible element of $E$  by Lemma \ref{essind}. 
The inclusion $\Irr(E) \subset E$ implies also that $\epsilon_l  \leq \epsilon_{l}'$, hence 
$\epsilon_l = \epsilon_l'$. This proves that $\epsilon_l = \epsilon_l'$ for $0 \leq l \leq d$ and 
$d\leq d'$. By Lemma \ref{essind} 
and the definition of the essential exponents,  one has the inclusions
$   \Irr (E) \subset E \subset \Z \{ p, \epsilon_0, \dots, \epsilon_{d} \}$, which imply that $d' = d$.
\end{proof}

  \begin{definition} \label{essell-2}
            If  $\psi  \in \C[[x^{1/n}]] $ is  a non-zero series and $p \in \N^*$, then we will 
        write: 
            \[\Ess(\psi, p) := \Ess(\Supp(\psi), p),\]
          and we will speak about the 
        {\bf sequence of essential exponents of $\psi$ relative to $p$}.  
  \end{definition}
  
 The characteristic exponents of a Newton-Puiseux series   
 are intimately related to its essential exponents relative to $1$: 
    
   \begin{lemma}  \label{charess}
        Let $(\alpha_1,  \dots,  \alpha_g)$ be the sequence of characteristic 
        exponents of  a series $\psi \in \C[[ x^{1/n}]]$. It may be obtained from  
        the sequence $(\epsilon_0, \epsilon_1, ..., \epsilon_d)$ of essential exponents 
        of $\psi$ \emph{relative to $1$} in the following way: 
           \begin{itemize}
               \item If $\epsilon_0 \notin \Z$, then $g = d + 1$ and 
                    $\alpha_i = \epsilon_{i-1}$ for all 
                    $i \in \{1, ..., d+1\}$.
                \item If $\epsilon_0 \in \Z$, then $g = d$ and $\alpha_i = \epsilon_i $ for all 
                    $i \in \{1, ..., d\}$.
           \end{itemize}
    \end{lemma}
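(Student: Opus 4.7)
The plan is to show that the essential condition (non-membership in $\Z\{1, \epsilon_0, \dots, \epsilon_{l-1}\}$) and the characteristic condition ($N_l \cdot l \notin \Z$) coincide once one identifies the auxiliary groups appearing in the two definitions. Writing $(\epsilon_0, \dots, \epsilon_d) := \Ess(\psi, 1)$, I would introduce for each $l \in \{0, \dots, d+1\}$ the group $G_l := \Z\{1, \epsilon_0, \dots, \epsilon_{l-1}\}$, with the convention $G_{d+1} := \Z\{1, \epsilon_0, \dots, \epsilon_d\}$. Since each $G_l$ is a finitely generated subgroup of $\Q$ containing $\Z$, it has the form $\frac{1}{M_l}\Z$ for a unique $M_l \in \N^*$.

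The central computation would then be the equality
\[
G_l \; = \; \Z + \sum_{e \in \Supp(\psi) \cap [0, \epsilon_l)} \Z \cdot e
\]
for each $l \in \{1, \dots, d\}$ (and with $\epsilon_{d+1} := +\infty$ when $l = d+1$). The inclusion $\supseteq$ is forced by the defining minimality of $\epsilon_l$: every support element strictly below $\epsilon_l$ must already lie in $G_l$. The inclusion $\subseteq$ is immediate, since $\epsilon_0, \dots, \epsilon_{l-1}$ all belong to $\Supp(\psi) \cap [0, \epsilon_l)$. The consequence is that $M_l = N_{\epsilon_l}$, because both are characterized as the smallest $N \in \N^*$ for which $\Supp(\psi) \cap [0, \epsilon_l) \subset \frac{1}{N}\Z$.

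For $l \geq 1$ the essential condition $\epsilon_l \notin G_l$ is therefore literally the characteristic condition $N_{\epsilon_l} \cdot \epsilon_l \notin \Z$, so every such $\epsilon_l$ is a characteristic exponent of $\psi$. For $\epsilon_0$, the set $\Supp(\psi) \cap [0, \epsilon_0)$ is empty, hence $N_{\epsilon_0} = 1$, and $\epsilon_0$ is characteristic precisely when $\epsilon_0 \notin \Z$; this is the origin of the dichotomy in the statement. For the reverse inclusion, given any $e \in \Supp(\psi)$ distinct from every $\epsilon_l$, I would pick the largest index $l$ with $\epsilon_l < e$ and argue by the same two-inclusion strategy that the group generated by $\{1\} \cup (\Supp(\psi) \cap [0, e))$ coincides with $G_{l+1}$; then $e$ lies in $G_{l+1}$ (for $l < d$ because $e < \epsilon_{l+1}$, and for $l = d$ because the essential sequence terminates exactly when $\Supp(\psi) \subset G_{d+1}$), so $N_e \cdot e \in \Z$ and $e$ is not characteristic.

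Since both the characteristic and the essential exponents are listed in increasing order, this bijection translates directly into the two formulas of the statement, and the index shift between the cases $g = d+1$ and $g = d$ comes down entirely to whether $\epsilon_0$ itself survives the characteristic test. I do not see any serious obstacle in this plan; the only delicate point is the set-theoretic bookkeeping around $\epsilon_0$ and around the terminal index $d$.
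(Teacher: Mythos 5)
Your proposal is correct and follows essentially the same approach as the paper's proof: both arguments translate the non-membership condition $\epsilon_l \notin \Z\{1,\epsilon_0,\dots,\epsilon_{l-1}\}$ of Definition~\ref{essell} into the condition $N_{\epsilon_l}\cdot\epsilon_l\notin\Z$ of Definition~\ref{DEFcharexp} by identifying $\Z\{1,\epsilon_0,\dots,\epsilon_{l-1}\}$ with $\frac{1}{N_{\epsilon_l}}\Z$. Your version makes explicit the group-theoretic bookkeeping (the groups $G_l$, the equality $M_l=N_{\epsilon_l}$, and the verification that non-essential support elements fail the characteristic test) that the paper's terse inductive proof leaves implicit.
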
 
    
    \begin{proof}   $\: $
    
       $\bullet$ \emph{Consider first the case in which $\epsilon_0 \notin \Z$}. 
        As the first characteristic exponent 
          is the minimal non-integral exponent in the support of $\psi$, we deduce that 
          $\alpha_1 = \epsilon_0$. 
          Assume by induction that $\alpha_i = \epsilon_{i -1}$ for all $i \in \{0, ..., l\}$. 
          Definition \ref{essell} implies that  
           $\epsilon_l $ is the first exponent of $\Supp(\psi)$ which 
           is strictly greater than $\epsilon_{l-1}$ and which cannot be written as a fraction 
           whose denominator is the least common denominator of the previous 
           exponents in the support of $\psi$.
        By Definition \ref{DEFcharexp}, we get that  $\alpha_{l+1}= \epsilon_{l}$. 
                     
           \medskip
           
    $\bullet$ \emph{Consider now the case in which $\epsilon_0 \in \Z$.} 
   By definition, $\epsilon_0 \in \N$ cannot be a characteristic exponent of 
        $\psi$ and  $\alpha_1 = \epsilon_{1}$. The result follows by induction, 
         using the same argument as in the first case.    
        \end{proof}

  In the rest of this section we will be especially interested in entire series
  with non-zero constant term, that is, in invertible elements of the multiplicative  
  monoid $(\C[[t]], \cdot)$. They form a multiplicative group, which we will denote by
  $(\C[[t]]^*, \cdot)$.

 Note that the entire series of the form $t \: \phi(t)$ for $\phi \in \C[[t]]^*$, that is, the 
 entire series of order $1$, form the 
  (non-commutative) group under \emph{composition} of series which admit a 
  \emph{reciprocal} (an inverse for composition). We denote by  $(t \ \C[[t]]^*, \circ)$
  this group. Division by $t$ transforms it bijectively into $(\C[[t]]^*, \cdot)$, but  is not a 
  morphism of groups. What is essential for us is that 
  \emph{the inversion for composition becomes an 
  involution of the set $\C[[t]]^*$ which has special properties with 
  respect to the terms whose exponents are irreducible} (see Proposition \ref{esspowers} 
  (\ref{essdual})). We use the following vocabulary for this involution:

  \begin{definition} \label{dualser}
      If $\phi \in \C[[t]]^*$, then its {\bf dual} \index{dual!of a series} 
      is the unique entire series 
      $\widecheck{\phi} \in \C[[u]]^*$ such that $u \: \widecheck{\phi}(u)$ and 
      $t \: \phi(t)$ are reciprocal. 
  \end{definition}

\begin{remark} 
If $\phi \in \C[[t]]^*$, then setting $u= t  \: \phi(t)$ defines a \textit{change of variable} in the ring 
$\C[[t]]$. Notice that  $\C[[t]] = \C[[u]]$ and  by Definition \ref{dualser} the following equivalence holds:
\begin{equation} \label{f:eq}
u= t  \: \phi(t) \Leftrightarrow t = u \: \widecheck{\phi} (u).
\end{equation}
\end{remark}

We use two variables $t$ and $u$ in our notations for a dual pair of series, in order 
to relate them easily from the notational point of view  to the two sets of 
Newton-Puiseux series of an irreducible $f(x,y) \in \C[[x,y]]$, which depend 
on the two variables $x$ and $y$ (see Section \ref{sect:applinv}). 

\medskip

The following proposition expresses the coefficients of the positive integral powers and of the dual 
of an entire series $\phi \in \C[[t]]^*$  in terms of those of $\phi$. 
It may be deduced from the statement and the proof of Wall's \cite[Lemma 3.5.4]{W 04}. 
But as we could not find it formulated in the literature and as it lies at the core of our first proof of 
the Halphen-Stolz theorem, we give a detailed proof of it.

 \begin{proposition} \label{esspowers}
          Let $\phi \in \C[[t]]^*$ and $N \in \N^*$. Then: 
            \begin{enumerate}
               \item \label{esspow} 
                    $\Irr(\phi^N) = \Irr(\phi)$. Moreover $[\phi^N]_0 = [\phi]_0^N$ and    \,  
                        $[\phi^N]_r = N\:  [\phi]_0^{N-1} [\phi]_r$, for all \,    $r \in \Irr(\phi) \setminus \{0\}$.
               \item \label{essdual} 
                      $\Irr(\widecheck{\phi}) = \Irr(\phi)$. Moreover 
                     $[\widecheck{\phi}]_0 = [\phi]_0^{-1}$ \mbox{ and } \,   
                     $[\widecheck{\phi}]_r = -  [\phi]_0^{-r -2}  [\phi]_r$, 
                          for all \,    $r \in \Irr(\phi) \setminus \{0\}$.
            \end{enumerate}
  \end{proposition}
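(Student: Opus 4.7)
For part (\ref{esspow}), I would expand
\[ [\phi^N]_r \;=\; \sum_{k_1+\cdots+k_N=r,\; k_i\in\N} [\phi]_{k_1}\cdots[\phi]_{k_N}, \]
and note that only tuples with every $k_i\in\Supp(\phi)$ can contribute. Since $r\in\Irr(\phi)\setminus\{0\}$ cannot be written as a sum of at least two positive elements of $\Supp(\phi)$, the surviving tuples are exactly the $N$ ones with a single coordinate equal to $r$ and all others equal to $0$; each contributes $[\phi]_0^{N-1}[\phi]_r$, which yields the asserted formula. The equality $[\phi^N]_0=[\phi]_0^N$ is immediate.

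For part (\ref{essdual}), I would appeal to the Lagrange inversion theorem (Theorem~\ref{Lagrinv}) in the shape
\[ [\widecheck{\phi}]_r \;=\; \frac{1}{r+1}\,[\phi^{-(r+1)}]_r, \]
and then expand $\phi^{-(r+1)}=[\phi]_0^{-(r+1)}\sum_{k\geq 0}(-1)^k\binom{r+k}{k}\bar\phi^k$ with $\bar\phi:=\phi/[\phi]_0-1$. The same irreducibility argument as in (\ref{esspow}) kills every term with $k\geq 2$, since such a term would write $r$ as a non-trivial sum of positive elements of $\Supp(\phi)$, leaving only the contribution from $k=1$. This gives $[\phi^{-(r+1)}]_r=-(r+1)[\phi]_0^{-r-2}[\phi]_r$; dividing by $r+1$ yields the announced formula, and the constant term $[\widecheck{\phi}]_0=[\phi]_0^{-1}$ follows from the $r=0$ case of the same Lagrange formula.

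For the set equalities $\Irr(\phi^N)=\Irr(\phi)=\Irr(\widecheck{\phi})$, I would use Lemma~\ref{twoind} to reduce them to the corresponding equalities of generated semigroups $\N^*\Supp(\cdot)$. One inclusion in each case is visible from the expansions above: every element of $\Supp(\phi^N)$ or $\Supp(\widecheck{\phi})$ sits in $\N^*\Supp(\phi)$. The reverse inclusion comes from the symmetric manipulation, namely expanding $\phi=(\phi^N)^{1/N}$ via Newton's binomial series (using that $\C$ is algebraically closed) for the first equality, and invoking the involutive character of the dual for the second. I expect the main subtlety to lie exactly here: the three supports may \emph{a priori} differ by cancellations that erase individual elements of $\Supp(\phi)$ or by new exponents produced by semigroup closure, so passing to $\N^*\Supp(\cdot)$ via Lemma~\ref{twoind} is what washes out these cancellations and delivers a uniform conclusion.
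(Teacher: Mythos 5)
Your argument is correct, but it follows a genuinely different route from the paper's. Part~(\ref{esspow}) is essentially identical to the paper's proof. For part~(\ref{essdual}), however, you invoke the Lagrange inversion formula of Theorem~\ref{Lagrinv} in the form $(r+1)[\widecheck{\phi}]_r=[\phi^{-(r+1)}]_r$ and then expand $\phi^{-(r+1)}$ by Newton's binomial series, killing all terms of degree $\geq 2$ in $\bar\phi$ by irreducibility. The paper instead works directly with the composition identity $t=(t\phi(t))\,\widecheck{\phi}(t\phi(t))$, extracts from it the coefficient-wise vanishing relations (equation~(\ref{vanish})), and proves the semigroup equality $\N^*\Supp(\phi)=\N^*\Supp(\widecheck{\phi})$ by a minimal-counterexample argument before deducing the coefficient formula from the case $k\in\{0,r\}$. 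For the set equalities, both of you reduce via Lemma~\ref{twoind} to semigroups, but you get the inclusion $\Supp(\widecheck{\phi})\subset\N^*\Supp(\phi)$ for free from the binomial expansion of $\phi^{-(r+1)}$ and close the argument by involutivity, while the paper argues both inclusions by contradiction.

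Your route is shorter and, I think, more transparent in the one-variable setting. Two observations on the trade-off: first, your appeal to Theorem~\ref{Lagrinv} is a forward reference within the paper's own architecture (the proposition sits in Section~\ref{sec:irred}, while Lagrange inversion is introduced in Section~\ref{sect:applinv}); this causes no logical circularity since Lagrange inversion is classical and independent, but it means the proof is no longer self-contained at its location. Second, and more substantively, the paper's direct manipulation of the composition identity is chosen precisely because it transports verbatim to the several-variables statement, Proposition~\ref{esspowersgen}, where the one-variable Lagrange formula has no immediate analogue for duality in only the first coordinate. So the paper's proof is less slick but more portable. One small cosmetic remark: you do not actually need $\C$ algebraically closed to extract $(\phi^N)^{1/N}$ — you already have the distinguished root with constant term $[\phi]_0$ — only characteristic zero, which is needed anyway for the binomial series.
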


  \begin{proof}
     One has:
         \begin{equation} \label{explicite} 
              \phi (t)= \sum_{j \in \Supp(\phi)} [\phi]_j \:  t^j. 
         \end{equation}
      The hypothesis $\phi \in \C[[t]]^*$ translates into $0 \in \Supp(\phi)$, 
      that is, $[\phi]_0 \neq 0$. 
       
       \medskip
       \noindent
          (\ref{esspow}) \emph{Consider first the case of $\phi^N$}.  
      By equation (\ref{explicite}), 
          we have:
             \begin{equation} \label{explicitepow} 
                   \phi^N (t)= \sum_{j_{1}, ..., j_{N} \in \Supp(\phi)} [\phi]_{j_{1}} \cdots  [\phi]_{j_{N}} 
                      \:   t^{j_{1} + \cdots + j_{N}}.   
             \end{equation}   
             
          $\bullet$ \emph{Let us show first that $\Irr(\phi) \subset \Irr(\phi^N)$ and that 
              one has the stated equalities between coefficients.} 
             Consider $r \in \Irr(\phi)$. If $r =0$, one has obviously $r \in \Irr(\phi^N)$ and 
              $[\phi^N]_0 = [\phi]_0^N$.  Assume therefore 
             that $r > 0$. The only way to write  $r$ as 
         a sum of $N$ elements of $\Supp(\phi)$, is that one of them be equal to $r$, and the 
         other ones vanish. There are $N$ different positions in the sum for the non-vanishing 
         one, therefore:
              \[  [\phi^N]_r = N\:  [\phi]_0^{N-1} [\phi]_r, \]
          which is the desired formula. 
          
          In particular, $[\phi^N]_r \neq 0$, which shows that $r \in \Supp(\phi^N)$. If $r$ 
          was reducible in $\Supp(\phi^N)$, it could be written as a non-trivial sum of elements 
          of $\Supp(\phi^N)$. By formula (\ref{explicitepow}), it would also be a non-trivial 
          sum of elements of $\Supp(\phi)$, which would contradict the fact that it is an 
          irreducible element of $\Supp(\phi)$. Therefore $r \in \Irr(\phi^N)$. 
          
          \medskip
          $\bullet$ \emph{Let us show now the reverse inclusion $\Irr(\phi^N) \subset \Irr(\phi)$.}  
  Consider an element $r \in \Irr(\phi^N)$. By formula (\ref{explicitepow}), we know that 
               it may be written as a sum of $N$ elements of $\Supp(\phi)$. In particular, it may 
               be  written as a sum of irreducible elements of $\Supp(\phi)$. By the previous 
               point, we know that those elements are also irreducible in $\Supp(\phi^N)$. 
               Our hypothesis $r \in \Irr(\phi^N)$ implies that there is only one non-zero term in 
               this sum, which proves the desired membership $r \in \Irr(\phi)$. 
       \medskip

         \noindent
         (\ref{essdual})  \emph{Consider now the case of $\widecheck{\phi}$}. 
          Write the analogue
          of (\ref{explicite}) for $\widecheck{\phi}$:
              \begin{equation} \label{explicitedual} 
                    \widecheck{\phi}(u) = \sum_{k \in \Supp(\widecheck{\phi})} [\widecheck{\phi}]_k \:  u^k. 
              \end{equation}
         As $t \: \phi(t) \in \C [[t]]$ and $u \:  \widecheck{\phi}(u) \in \C [[u]]$ are reciprocal series, 
         one has by definition the identity: 
            \[t = (t \: \phi(t)) \:  \widecheck{\phi}( t\:  \phi(t) ), \]
        which, after division by $t$ and combination with the expansion (\ref{explicitedual}), gives:
          \begin{equation} \label{onexp} 
                 1 =  \sum_{k \in \Supp(\widecheck{\phi})} [\widecheck{\phi}]_k \:  t^k \phi(t)^{k+1}.   
          \end{equation}
         Expand now the powers $ \phi(t)^{k+1}$ using equation (\ref{explicitepow}). We get:
           \[ 1 = \sum_{\small{\begin{array}{c}
                                      k \in \Supp(\widecheck{\phi})\\
                                      j_{1}, ..., j_{k+1} \in \Supp(\phi)
                                  \end{array} }}    
                          [\widecheck{\phi}]_k  \:  [\phi]_{j_{1}} \cdots [\phi]_{j_{k+1}} \: 
                             t^{k + j_{1} + \cdots + j_{k+1}}.   \]
          Therefore:
           \begin{equation}  \label{vanish}
                \sum_{\small{\begin{array}{c}
                                      k \in \Supp(\widecheck{\phi})\\
                                      j_{1}, ..., j_{k+1} \in \Supp(\phi) \\
                                      k + j_{1} + \cdots + j_{k+1} = p
                                  \end{array} }}    
                          [\widecheck{\phi}]_k  \:  [\phi]_{j_{1}} \cdots [\phi]_{j_{k+1}}    =0 , \:  
                             \hbox{\rm for all } \: p \in \N^*.
           \end{equation}

           \medskip
           $\bullet$ \emph{Let us show first that $\Irr(\phi) =  \Irr(\widecheck{\phi})$}. 
            By Lemma \ref{twoind} (1), the irreducible elements of a set are determined 
           by the semigroup it generates. In order to show that the sets $\Irr(\phi)$ and 
           $\Irr(\widecheck{\phi})$ coincide, \emph{it is therefore enough to prove that:}          
              \begin{equation} \label{equalsg} 
                  \N^* (\Supp(\phi)) = \N^* (\Supp(\widecheck{\phi})).
               \end{equation} 
           
           The situation being symmetric between $\phi$ and $\widecheck{\phi}$, we may 
           prove only the inclusion: 
              \begin{equation} \label{inclsem} 
                     \N^* ( \Supp(\widecheck{\phi})) \: \subset \:  \N^* (\Supp(\phi)).
             \end{equation}
           
           We will argue by contradiction, assuming that the previous inclusion is false. 
           Consider then: 
                \begin{equation} \label{contrhyp}    
                        r \in  \N^* ( \Supp(\widecheck{\phi})) \: \setminus \:  \N^* (\Supp(\phi)),
               \end{equation}
           which is \emph{minimal} with this property. As $ 0 \in \Supp(\phi)$ , we have $r > 0$. 
                           
             Apply equation (\ref{vanish}) to $p =r$. Consider a tuple: 
                \begin{equation} \label{belonging}
                   (k,  j_{1}, ..., j_{k+1}) \in \Supp(\widecheck{\phi}) \times \Supp(\phi)^{k+1} 
                \end{equation}
           such that: 
                \begin{equation} \label{suml}
                        k + j_{1} + \cdots + j_{k+1} = r. 
                \end{equation}
            Let us show that this implies the equality $k=r$.  Reasoning again by contradiction, 
            assume that $k <r$. As 
            $k \in \Supp(\widecheck{\phi}) \subset \N^* (  \Supp(\widecheck{\phi}))$,  
             the minimality of $r$ shows that $k \in \N^* (  \Supp(\phi))$. 
             Combining condition (\ref{belonging})
           and equation (\ref{suml}), we deduce that $r \in \N^* (  \Supp(\phi))$, which contradicts 
           the assumption (\ref{contrhyp}). 
           
           Therefore, if both (\ref{belonging}) and (\ref{suml}) are true, then $k =r$, which implies 
           that $j_{1} = \cdots = j_{r+1} = 0$. 
           Hence there is only one term in the sum of the left-hand side 
           of equation  (\ref{vanish}) for $p =r$, and we get: 
               \[  [\widecheck{\phi}]_{r} \:  [\phi]_0^{r+1} =0, \]
            which contradicts the assumption that both coefficients $[\widecheck{\phi}]_{r}$ 
            and $[\phi]_0$ are non-zero (as they are associated to elements of the supports 
            of $\phi$ and $\widecheck{\phi}$). 
            
            Our proof of the inclusion (\ref{inclsem})  is finished. Therefore, as explained 
            above, we get the desired equality $\Irr(\widecheck{\phi}) = \Irr(\phi)$.
            
                       \medskip
                       
           $\bullet$ \emph{Let us prove the identities relating the coefficients 
              associated to the irreducible exponents of $\phi$ and $\widecheck{\phi}$.}
            Consider $r \in \Irr(\phi) = \Irr(\widecheck{\phi})$. Look again at the tuples 
         $ (k,  j_{1}, ..., j_{k+1})$ satisfying the conditions (\ref{belonging}) and (\ref{suml}) above. 
         
         If $k \notin \{0, r\}$, then  at least one of the numbers $j_{1}, ..., j_{k+1}$ would not 
         vanish. Equation (\ref{suml}) gives a non-trivial 
         decomposition of $r$ inside 
         $\Supp(\phi)  \cup \Supp(\widecheck{\phi}) \subset  \N^*( \Supp(\phi)) 
         \stackrel{ (\ref{equalsg}) }{=}  \N^*( \Supp(\widecheck{\phi}))$,  
          which shows that $r \notin \Irr(\N^* \: \Supp(\phi))$. 
         This contradicts Lemma \ref{twoind} (1). 
         
         Therefore, one has necessarily $k=0$ or $k = r$. Both possibilities determine 
         completely $( j_{1}, ..., j_{k+1})$ through equation (\ref{suml}). Applying equation 
         (\ref{vanish}) to $p =r$, we get: 
            \begin{equation} \label{eqcoefsym} 
                  [\widecheck{\phi}]_0 \:  [\phi]_r +  [\widecheck{\phi}]_r \:   [\phi]_0^{r+1} =0.  
            \end{equation}
       The equalities (\ref{eqcoefsym}) and $[\widecheck{\phi}]_0 = [\phi]_0^{-1}$ 
          imply the formula 
          for $[\widecheck{\phi}]_r$ written in the statement of the proposition.       
  \end{proof}

  Combining Proposition \ref{esspowers} with Lemma  \ref{eqess1}, we get:

  \begin{corollary} \label{coress}
       Let $\phi \in \C[[t]]^*$ and $N, p \in \N^*$. Then the sequences of essential exponents 
       of $\phi, \phi^N$ and $\widecheck{\phi}$ relative to $p$ coincide. 
  \end{corollary}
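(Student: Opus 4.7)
The plan is to combine the two results whose names appear in the corollary: Proposition \ref{esspowers} tells us that the sets of irreducible exponents are preserved under both taking powers and taking duals, i.e.\ $\Irr(\phi) = \Irr(\phi^N) = \Irr(\widecheck{\phi})$, while Lemma \ref{eqess1} tells us that the essential sequence depends only on the irreducible part of the support, namely $\Ess(\Supp(\psi), p) = \Ess(\Irr(\psi), p)$ for any series $\psi$.

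Concretely, I would argue as follows. By Definition \ref{essell-2}, $\Ess(\phi, p) = \Ess(\Supp(\phi), p)$, and similarly for $\phi^N$ and $\widecheck{\phi}$. Applying Lemma \ref{eqess1} to each of these three supports yields
\[
\Ess(\phi, p) = \Ess(\Irr(\phi), p), \quad
\Ess(\phi^N, p) = \Ess(\Irr(\phi^N), p), \quad
\Ess(\widecheck{\phi}, p) = \Ess(\Irr(\widecheck{\phi}), p).
\]
By Proposition \ref{esspowers}, parts (\ref{esspow}) and (\ref{essdual}), the three sets $\Irr(\phi)$, $\Irr(\phi^N)$ and $\Irr(\widecheck{\phi})$ are equal, hence the three sequences on the right-hand sides coincide, and the corollary follows.

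There is no genuine obstacle here: all the work has already been done in Proposition \ref{esspowers} (which is the substantive computation involving the dual and powers) and in Lemma \ref{eqess1} (which reduces essential sequences to irreducible supports). The corollary is essentially a one-line consequence of stringing those two facts together, so the proof should be very short and require no further calculation.
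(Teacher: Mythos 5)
Your argument is exactly the paper's: the corollary is stated there as an immediate consequence of combining Proposition \ref{esspowers} (equality of the sets of irreducible exponents) with Lemma \ref{eqess1} (essential sequences depend only on the irreducible exponents), which is precisely what you do. Correct and identical in approach.
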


In the next section we apply Proposition \ref{esspowers}  and its Corollary \ref{coress}
  in  order to relate the 
\emph{essential exponents relative to $1$} and their \emph{coefficients} 
for the Newton-Puiseux series of an irreducible series $f(x,y) \in \C [[x,y]]$.

\section{Applications to inversion formulae for Newton-Puiseux series}
\label{sect:applinv}

 Let $f(x,y)\in \C[[x,y]]$ be an irreducible formal power series. 
 One has therefore associated Newton-Puiseux series relative to both coordinates $x$ and $y$. 
In this section we prove in two ways 
the \emph{Halphen-Stolz theorem} (Corollary \ref{invPuiseux}), which relates the coefficients 
of the terms with essential exponents relative to $1$ in both series. The first proof, 
summarized in the flow-chart (\ref{reasoning}),
applies directly the results of the previous section. The second one passes through a 
more general result, allowing to compute recursively \emph{all} the coefficients of one series 
in terms of those of the other one (see Proposition \ref{invexpl}). In turn, this proposition 
is a consequence of a version of the classical Lagrange inversion formula 
(see Theorem \ref{Lagrinv}).

\medskip

\subsection{The first proof of the Halphen-Stolz theorem} 
\label{sect:firstproof}
$\:$ 
\medskip

There is no natural bijection between the Newton-Puiseux series of a formal power series 
$f(x,y)$ relative to $x$ and $y$,  
for the simple reason that their numbers are in general different. We want to explain 
first that if one takes adequate roots of them, then one gets two sets  which are 
naturally in a bijective correspondence (see  Proposition \ref{propbij}). 

 Let us denote by  $\eta (x^{1/n}) \in  \C[[x^{1/n}]]$ a Newton-Puiseux series of $f(x,y)$  
 with respect to $x$, where $\eta \in \C[[ t]]$. 
 We assume that the representation of this Newton-Puiseux power series is primitive  
(see Definition \ref{def: prim}). The series  $\eta$  is of the form:
     $$\eta =  a \cdot t^{m} + \mbox{higher order terms},$$
  with $m >0$ and $a \in \C^*$. 
 Let us choose an $m$-th root $\tilde{a}\in \C^*$ of $a$.
 Then, we have a unique $m$-th root $t \: \tilde{\eta}(t) \in \C[[t]]$ of $\eta$:
    \begin{equation}  \label{power}
        \eta(t) = (t \:  \tilde{\eta}(t))^m, 
    \end{equation}
 such that the series $\tilde{\eta}$ has constant term $[\tilde{\eta}]_0 = \tilde{a} \ne 0$.

 \begin{example}  \label{ex:varnot}
     Start from a branch $C$ with Newton-Puiseux series:
       $\psi=  x^{3/2} + c \cdot x^{7/4}$,
     where $c \in \C^*$. Therefore, we get $a =1$, $n =4$ and $\psi = \eta( x^{1/4})$, where
        ${\eta}(t) = t^6 + c \cdot t^7 = t^6(1 + c \cdot t).$
     This shows that $m = 6$ and if $\tilde{a} = 1$ then 
         $\tilde{\eta}(t) = (1 + c\cdot t)^{1/6} $. By (\ref{binom}) below
         we have the expansion $\tilde{\eta}(t) = 
          1 + \sum_{k \in \N^*} \binom{1/6}{k} c^k \cdot t^k$.        
 \end{example}
 
 Let us come back to the general case. 
 
 \medskip

 Denote by $\tilde{\xi}(u) \in \C[[u]]^*$ the dual series of $\tilde{\eta}(t)$ (see Definition \ref{dualser}).  
Hence, one has the following equivalence (see (\ref{f:eq})): 
    \begin{equation} \label{invser}
           u = t \:  \tilde{\eta}(t) \:  \Leftrightarrow \:   t = u \: \tilde{\xi}(u). 
    \end{equation}  
    
    \medskip
    
As $\eta(x^{1/n})$ is a Newton-Puiseux-series of $f(x,y)$ relative to $x$, we have:
   \[f(x, \eta(x^{1/n})) =0. \]
 Replacing $x$ by $t^n$ and using the equality (\ref{power}), we get: 
        \begin{equation}   \label{eqsm1}
              f(t^n, (t \: \tilde{\eta}(t))^m) =0. 
         \end{equation} 
By the equivalence (\ref{invser}), we deduce: 
     \begin{equation}  \label{eqsm2} 
            f((u\: \tilde{\xi}(u))^n, u^m) =0. 
     \end{equation}
 Consequently, if one defines: 
    \begin{equation}  \label{powerbis}
        \xi(u)  = (u \:  \tilde{\xi}(u))^n  
    \end{equation} 
 (an equation which is analogous to (\ref{power})), then one sees that:
      \[   f(\xi(y^{1/m}), y) =0, \]
 that is,  $ {\xi}(y^{1/m}) $
  is a Newton-Puiseux series of $f(x, y) $ with respect 
 to the variable $y$. 
 
 In fact, one has the following proposition: 
    
    \begin{proposition}  \label{propbij}
        $\: $ 
       
       \begin{enumerate} 
          \item \label{algbij} 
               The map  $$x^{1/n} \tilde{\eta}(x^{1/n}) \longrightarrow y^{1/m} \tilde{\xi}(y^{1/m})$$ 
              induced by the duality involution on $\C[[t]]^*$, gives a bijection from  the 
              set of $m$-th roots of the Newton-Puiseux series of $f(x,y)$ relative to  $x$ 
              to the set of  $n$-th roots of those relative to  $y$. 
          \item \label{geobij}  
          
                 If $\tilde{f}(t,u) : = f(t^n, u^m) \in \C [[t, u]]$ then: 
          \begin{itemize}
              \item[-]  the series of the form $t \:  \tilde{\eta}(t) \: \in \C [[t]]$ 
                 are the Newton-Puiseux series of $\tilde{f}$ relative to  $t$.
              \item[-] the series of the form $u \: \tilde{\xi}(u) \:  \in \C [[u]]$ 
                 are the Newton-Puiseux series of $\tilde{f}$ relative to $u$.
          \end{itemize}
         \end{enumerate}
         
    \end{proposition}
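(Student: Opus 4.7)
Plan: I would prove part (2) first, since it gives concrete normal forms for the Newton-Puiseux series of $\tilde{f}$, and then derive (1) as a straightforward consequence by computing how the duality on $\C[[t]]^*$ acts on those normal forms. The key identity driving both parts is that, via $\eta(s)=(s\tilde{\eta}(s))^m$ and the substitution $x=t^n$, the relation $f(x,\eta(\rho x^{1/n}))=0$ becomes $\tilde{f}(t,\rho t\tilde{\eta}(\rho t))=0$; multiplying the $u$-argument by any $\zeta\in G_m$ does not change $u^m$ and therefore leaves $\tilde{f}(t,u)=f(t^n,u^m)$ unaffected, producing $\tilde{f}(t,\zeta\rho t\tilde{\eta}(\rho t))=0$ for all $(\zeta,\rho)\in G_m\times G_n$.

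For (2), I would count the Newton-Puiseux series of $\tilde{f}$ relative to $t$: the associated Weierstrass polynomial $\tilde{F}\in\C[[t]][u]$ has degree $\mathrm{ord}_u\tilde{f}(0,u)=m\cdot\mathrm{ord}_y f(0,y)=mn$. The computation above produces $mn$ candidate roots $\zeta\rho t\tilde{\eta}(\rho t)$, indexed by $(\zeta,\rho)\in G_m\times G_n$. I would establish distinctness using the primitivity of the representation $\eta(x^{1/n})$: if two such series agreed, raising to the $m$-th power would yield $\eta(\rho t)=\eta(\rho' t)$, forcing $\rho=\rho'$ by the distinctness of the conjugates in Theorem \ref{NPthm} for primitive $\eta$, and then $\zeta=\zeta'$ immediately from the original identity. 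With $mn$ distinct roots of the monic polynomial $\tilde{F}$ of degree $mn$, they exhaust its roots. The statement for $\tilde{f}$ relative to $u$ follows by the symmetric argument applied to $\xi(v)=(v\tilde{\xi}(v))^n$ and to the Newton-Puiseux series $\xi(\sigma y^{1/m})$ of $f$ relative to $y$.

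For (1), the substitution $t=x^{1/n}$ identifies the set of $m$-th roots of the Newton-Puiseux series of $f$ relative to $x$ with the list of Newton-Puiseux series of $\tilde{f}$ relative to $t$ obtained in (2), and symmetrically $u=y^{1/m}$ identifies the $n$-th roots relative to $y$ with the NP-series of $\tilde{f}$ relative to $u$. It therefore suffices to show that the duality on $\C[[t]]^*$ restricts to a bijection between these two lists. A direct computation using the equivalence (\ref{invser}), writing $\phi(t)=\zeta\rho\tilde{\eta}(\rho t)$ and setting $s=\rho t$, $v=u/\zeta$, gives $\widecheck{\phi}(u)=(\zeta\rho)^{-1}\tilde{\xi}(\zeta^{-1}u)$, so the image series $u\widecheck{\phi}(u)=\rho^{-1}\zeta^{-1}u\tilde{\xi}(\zeta^{-1}u)$ is precisely the entry of the $u$-list indexed by $(\rho^{-1},\zeta^{-1})\in G_n\times G_m$. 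The map $(\zeta,\rho)\mapsto(\rho^{-1},\zeta^{-1})$ is a bijection $G_m\times G_n\to G_n\times G_m$, which yields the desired bijection.

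The principal obstacle I anticipate is bookkeeping: correctly tracking how the two independent actions (by $\rho\in G_n$ on the argument of $\tilde{\eta}$ and by $\zeta\in G_m$ scaling the $m$-th root) interact under composition-inversion, and verifying that no $m$-th root of a Newton-Puiseux series of $f$ escapes the ring $\C[[x^{1/n}]]$. The latter is automatic because $\tilde{\eta}\in\C[[t]]^*$ already, so the identity $\eta(s)=(s\tilde{\eta}(s))^m$ places every $m$-th root of every conjugate $\eta(\rho x^{1/n})$ inside $\C[[x^{1/n}]]$, and the substitution $t=x^{1/n}$ introduces no further ramification.
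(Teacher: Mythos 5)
Your proof is correct and follows essentially the same line as the paper's, with the main difference being that you flesh out the details that the paper leaves implicit. The paper's proof of part (1) is a one-liner: both sets have $mn$ elements, the duality $\tilde{\eta}\mapsto\tilde{\xi}$ is an involution (hence injective), so the map is bijective; and its proof of (2) simply cites equation (\ref{eqsm1}) as showing the roots of $\tilde{f}$ are "exactly" those of the form $t\tilde{\eta}(t)$. You make both steps rigorous: you prove (2) first by the explicit count $\mathrm{ord}_u\tilde{f}(0,u)=mn$, exhibit the $mn$ candidates $\zeta\rho\,t\,\tilde{\eta}(\rho t)$, and establish their distinctness via primitivity of the representation (a point the paper does not address); and for (1), instead of appealing abstractly to involutivity, you compute the dual of $\phi(t)=\zeta\rho\,\tilde{\eta}(\rho t)$ explicitly as $u\widecheck{\phi}(u)=\rho^{-1}\zeta^{-1}u\,\tilde{\xi}(\zeta^{-1}u)$ and read off the bijection $(\zeta,\rho)\mapsto(\rho^{-1},\zeta^{-1})$ between $G_m\times G_n$ and $G_n\times G_m$. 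This buys you more information (the explicit permutation underlying the bijection, which can be useful when tracking conjugates), at the cost of more bookkeeping; the paper's argument, by contrast, gets by with the abstract involutivity of duality and the cardinality count, at the cost of leaving the distinctness of the $mn$ roots and the "exactly" in (2) to the reader. Both are sound; yours is the more self-contained version.
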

    
   \begin{proof}  $\: $
     
    (\ref{algbij}) The two sets have both $mn$ elements 
          and the given map is injective 
          because the map $\tilde{\eta} \to \tilde{\xi}$ is an involution. 
          Therefore the given map is bijective. 
        
        \medskip
      
      (\ref{geobij})
       Equation  
           (\ref{eqsm1}) shows that the Newton-Puiseux 
           series of $\tilde{f}$ relative to the variable $t$ are exactly those of the form 
           $t \:  \tilde{\eta}(t)$. 
           The situation is analogous for the series of the form 
           $u \: \tilde{\xi}(u)$. 
   \end{proof}

    The following lemma relates special sequences of essential exponents  
    of the series ${\eta}(t)$ and $\tilde{\eta}(t)$ on one side, and of  the series ${\xi}(u)$ and $\tilde{\xi}(u)$ 
    on another side:

    \begin{lemma} \label{divpower}
        Denote:
           $$     \left \{   \begin{array}{l}
                                    \Ess(\eta, n) = (m , \epsilon_1,  ..., \epsilon_d), \\
                                    \Ess(\xi, m) = (n, \epsilon'_1, ..., \epsilon'_{d'}). 
                             \end{array}   \right. $$ 
        Then: 
           \[   \left \{   \begin{array}{l}
                                    \Ess(\tilde{\eta}, \gcd (n,m)) = 
                                           (0, \epsilon_1 - m, ..., \epsilon_d - m), \\
                                    \Ess(\tilde{\xi}, \gcd (n,m)) = 
                                        (0, \epsilon'_1 - n, ..., \epsilon'_{d'} - n). 
                             \end{array} \right.  \]
    \end{lemma}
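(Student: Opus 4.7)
My plan is to prove the first identity by induction on the index $l$, and then obtain the second one by the symmetric argument with $\xi, \tilde\xi, n$ in the roles of $\eta, \tilde\eta, m$. The starting observation is the identity (\ref{power}), which reads $\eta(t) = t^m \, \tilde\eta(t)^m$ and so at the level of supports becomes
\[
\Supp(\eta) \;=\; m + \Supp(\tilde\eta^{\,m}).
\]
For the base case, $\Ess(\eta,n)_0 = \min \Supp(\eta) = m$ because $\eta = a t^m + \cdots$ with $a\neq 0$, while $\Ess(\tilde\eta, \gcd(n,m))_0 = \min \Supp(\tilde\eta) = 0$ since $\tilde\eta$ is invertible, i.e.\ $[\tilde\eta]_0 = \tilde a \neq 0$; these are consistent with the claimed shift by $m$.

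For the inductive step, suppose the identity holds at indices $0, 1, \ldots, l-1$, set $\epsilon_0 := m$, and consider the subgroup
\[
G_l \;:=\; \Z\{n, m, \epsilon_1, \ldots, \epsilon_{l-1}\}.
\]
The key group-theoretic observation is that $\Z\{n,m\} = \gcd(n,m)\,\Z$, so adding or subtracting integer multiples of $m$ inside $G_l$ is free; this yields
\[
G_l \;=\; \Z\{\gcd(n,m), \epsilon_1 - m, \ldots, \epsilon_{l-1} - m\}.
\]
In particular $m \in G_l$, hence $G_l - m = G_l$, and combining this with $\Supp(\eta) = m + \Supp(\tilde\eta^{\,m})$ gives
\[
\min\bigl(\Supp(\eta)\setminus G_l\bigr) \;=\; m + \min\bigl(\Supp(\tilde\eta^{\,m})\setminus G_l\bigr),
\]
that is $\epsilon_l = m + \Ess(\tilde\eta^{\,m}, \gcd(n,m))_l$. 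Applying Corollary \ref{coress} to identify $\Ess(\tilde\eta^{\,m}, \gcd(n,m))_l$ with $\Ess(\tilde\eta, \gcd(n,m))_l$ completes the induction. The equality of the total lengths $d$ of the two sequences follows from the fact that the essential sequence terminates exactly when the remaining support lies in the current subgroup, a condition preserved under the shift by $m$ (since $m \in G_l$).

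The second identity is obtained in the same manner, starting from $\xi(u) = u^n \, \tilde\xi(u)^n$ from (\ref{powerbis}) and interchanging the roles of $n$ and $m$. The only real obstacle is the bookkeeping of subgroups at each inductive step; the substantive input is Corollary \ref{coress}, which lets us pass freely between $\tilde\eta$ and $\tilde\eta^{\,m}$ (respectively $\tilde\xi$ and $\tilde\xi^{\,n}$) when reading off essential exponents.
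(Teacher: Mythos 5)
Your proof is correct and follows essentially the same route as the paper's: both rest on the support shift $\Supp(\tilde\eta^{\,m}) = \Supp(\eta) - m$ coming from $\eta = (t\tilde\eta)^m$, the group identity $\Z\{n,m,\epsilon_1,\dots,\epsilon_{k-1}\} = \Z\{\gcd(n,m),\epsilon_1-m,\dots,\epsilon_{k-1}-m\}$, and the passage from $\tilde\eta^{\,m}$ to $\tilde\eta$ via Corollary~\ref{coress} (which the paper applies once at the top via Proposition~\ref{esspowers} and Lemma~\ref{eqess1}, whereas you invoke it at each inductive step). The induction is just a different way of organizing the same direct verification against Definition~\ref{essell}.
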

    
    \begin{proof}
         By symmetry, it is enough to treat the case of the series $\tilde{\eta}$.
       Since $\tilde{\eta} \in \C[[t]]^*$, 
      Proposition \ref{esspowers} implies that 
     $\Irr( \tilde{\eta}) = \Irr ( \tilde{\eta}^m )$. 
      Then, by Lemma \ref{eqess1},  for any integer $p \in \N^*$ one has: 
   \[
   \Ess ( \tilde{\eta}, p) = \Ess ( \Irr( \tilde{\eta}), p ) =  \Ess (  \Irr ( \tilde{\eta}^m), p ) 
         =  \Ess ( \tilde{\eta}^m, p).
   \]
 Thus,  it is enough to prove that $(0, \epsilon_1 - m, ..., \epsilon_d - m)$ is 
       the sequence of essential exponents 
        of $\tilde{\eta}^m$ relative to $\gcd(n,m)$.
       By formula (\ref{power}), we have that $\tilde{\eta}^m =  t^{-m} {\eta}$, therefore: 
         $ \Supp(\tilde{\eta}^m) = \Supp({\eta}) - m$.
       Using Definition \ref{essell}, we see that we have to prove that:
           \begin{itemize}
             \item $\min (\Supp({\eta}) - m) = 0$. 
             \item For all $k \in \{1, ..., d\}$: 
               $ \epsilon_k - m = \min \left((\Supp({\eta}) - m) \setminus 
                     \Z\{ \, \gcd{(n,m)}, 0,  \epsilon_1 - m, ..., \epsilon_{k-1} - m \} \right)$.   
             \item $\Supp({\eta}) - m \subset 
                   \Z\{\, \gcd{(n,m)}, 0, \epsilon_1 - m, ..., \epsilon_d - m \}$. 
        \end{itemize}
        But all these facts are immediate from the definition of the essential exponents 
            $\epsilon_i$, because:
        \[
              \Z\{ \, \gcd{(n,m)}, 0,  \epsilon_1 - m, ..., \epsilon_{k-1} - m \}  =  
         \Z\{ n , m,  \epsilon_1 , ..., \epsilon_{k-1}  \},
        \]
         for all $1 \leq k \leq d$, an equality which is immediate to check by double inclusion.
    \end{proof}

    We are ready to deduce an \emph{inversion formula}, expressing the sequence of essential 
    exponents of ${\xi}$ relative to $m$ and the associated coefficients 
    in terms of the sequence of essential exponents of ${\eta}$ relative to $n$ 
    and their associated coefficients. We chose to inverse also the order of presentation, 
    by starting from any pair of dual series $(\tilde{\eta}, \tilde{\xi})$ and any pair 
    of positive integers $(m,n)$, and by associating to them the series $(\eta, \xi)$ by the 
    formulae (\ref{power}) and (\ref{powerbis}). In this way, we emphasize only 
    univalued maps, in contrast to their  reciprocals, which involve taking roots.

    \begin{theorem}   \label{inversioncoef}
       Let $\tilde{\eta}\in \C[[t]]^* $ and $\tilde{\xi} \in \C[[u]]^*$ be dual of each other 
       and $m,n \in \N^*$. Let $\tilde{a}$ be the constant term of $\tilde{\eta}$. Denote:
            \[ \left\{ \begin{array}{l} 
                    \eta(t) = (t \: \tilde{\eta}(t))^m, \\
                     \xi(u) = (u\:  \tilde{\xi}(u))^n, 
                \end{array} \right. \]
        and:  
         \[  \left \{   \begin{array}{l}
                                    \Ess(\eta, n) = (m , \epsilon_1, ..., \epsilon_d), \\
                                    \Ess(\xi, m) = (n, \epsilon'_1, ..., \epsilon'_{d'}). 
                             \end{array}   \right.  \]
   Then one has the following inversion formulae for exponents and coefficients: 
   \begin{equation}  \label{eq: inv1}
          d' = d, 
    \end{equation}
  \begin{equation}  \label{eq: inv2}
   \epsilon_k' + m = \epsilon_k + n, \: \hbox{\rm for all } \:  k \in \{1, ..., d \}, 
 \end{equation}
         \begin{equation} \label{eq: inv3}
                  [{\xi}]_{n} =  {\tilde{a}}^{-n} 
            \quad 
            \mbox{ and }  \quad 
              [{\xi}]_{\epsilon_k'} = - \frac{n}{m} \:  {\tilde{a}}^{-n - \epsilon_k} 
              [{\eta}]_{\epsilon_k}, \: \hbox{\rm for all } \:  k \in \{1, ..., d \}. 
          \end{equation}
    \end{theorem}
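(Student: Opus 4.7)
My plan is to derive the theorem by a two-step reduction that pushes the statement from the pair $(\eta,\xi)$ to the dual pair $(\tilde\eta,\tilde\xi)$, where the machinery of Section 3 applies directly.

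\emph{First step: the exponent identities.} I would apply Lemma \ref{divpower} to both series, which converts the essential sequences of $\eta$ and $\xi$ (relative to $n$ and $m$) into essential sequences of $\tilde\eta$ and $\tilde\xi$ relative to the common integer $\gcd(n,m)$:
\[
\Ess(\tilde\eta,\gcd(n,m)) = (0,\epsilon_1-m,\dots,\epsilon_d-m),\qquad
\Ess(\tilde\xi,\gcd(n,m)) = (0,\epsilon_1'-n,\dots,\epsilon_{d'}'-n).
\]
Because $\tilde\eta$ and $\tilde\xi$ are dual, Corollary \ref{coress} forces these two essential sequences to coincide. Comparing entry by entry yields $d'=d$ and $\epsilon_k-m=\epsilon_k'-n$, which is exactly \eqref{eq: inv2}.

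\emph{Second step: the coefficient identities.} The idea is to navigate the diagram $\eta \leftrightarrow \tilde\eta \leftrightarrow \tilde\xi \leftrightarrow \xi$ using Proposition \ref{esspowers}, valid at each arrow because all relevant exponents are irreducible: $\epsilon_k\in\Irr(\eta)$ and $\epsilon_k'\in\Irr(\xi)$ by Lemma \ref{essind}, and the shifted exponents $\epsilon_k-m=\epsilon_k'-n$ are irreducible in $\tilde\eta$ and $\tilde\xi$ by the first step together with Lemma \ref{essind}. Concretely, from $\xi(u)=u^n\tilde\xi(u)^n$ I get
\[
[\xi]_n = [\tilde\xi^n]_0 = [\tilde\xi]_0^n,\qquad
[\xi]_{\epsilon_k'} = [\tilde\xi^n]_{\epsilon_k'-n} = n\,[\tilde\xi]_0^{n-1}\,[\tilde\xi]_{\epsilon_k'-n},
\]
by Proposition \ref{esspowers}(1). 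Duality (Proposition \ref{esspowers}(2)) transports this to $\tilde\eta$: $[\tilde\xi]_0=\tilde a^{-1}$, and, using $\epsilon_k'-n=\epsilon_k-m$,
\[
[\tilde\xi]_{\epsilon_k'-n} = -\,\tilde a^{-(\epsilon_k-m)-2}\,[\tilde\eta]_{\epsilon_k-m}.
\]
Finally, $\eta=t^m\tilde\eta^m$ together with Proposition \ref{esspowers}(1) gives $[\tilde\eta]_{\epsilon_k-m} = [\eta]_{\epsilon_k}/(m\,\tilde a^{m-1})$. Substituting everything back yields $[\xi]_n=\tilde a^{-n}$ directly, and for the remaining coefficient a routine accounting of the exponents of $\tilde a$, namely $(n-1)+(\epsilon_k-m)+2+(m-1)=n+\epsilon_k$, produces \eqref{eq: inv3}.

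\emph{Main obstacle.} The one point that truly needs care is membership in the irreducible sets: Proposition \ref{esspowers} only gives the clean coefficient formulae at irreducible exponents, so I must verify at each of the four arrows that the exponent I am tracking is irreducible in the relevant series. This is where Lemma \ref{essind} is indispensable; without it, the chain of substitutions breaks. Everything else is bookkeeping of exponents of $\tilde a$, and no auxiliary lemma beyond those already established in Sections 2 and 3 is needed.
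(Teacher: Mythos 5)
Your proposal is correct and matches the paper's proof essentially step for step: both derive the exponent identities by comparing $\Ess(\tilde\eta,\gcd(n,m))$ and $\Ess(\tilde\xi,\gcd(n,m))$ via Lemma \ref{divpower} and Corollary \ref{coress}, and both obtain the coefficient identities by chaining Proposition \ref{esspowers} through $\xi \to \tilde\xi \to \tilde\eta \to \eta$, with Lemma \ref{essind} supplying the needed irreducibility at each stage. The only difference is cosmetic ordering of the two lemmas in the first step, and your exponent accounting $(n-1)+(\epsilon_k-m)+2+(m-1)=n+\epsilon_k$ checks out.
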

    
    \begin{proof}
          The entire series $\tilde{\eta}$ and $\tilde{\xi}$ being dual  in the sense 
          of Definition \ref{dualser}, Corollary \ref{coress} shows  that they have the same 
          sequences of essential exponents relative to  $\gcd(n,m)$. Then, Lemma \ref{divpower} 
          allows us to deduce the desired formulae (\ref{eq: inv1}) and (\ref{eq: inv2})  
          relating the two sequences 
          $(\epsilon_k)_k$ and $(\epsilon'_k)_k$.

          \medskip
          Let us pass to the proof of the inversion formula (\ref{eq: inv3}) for the coefficients. 
          
          Equation (\ref{powerbis}) implies that ${\xi} = u^n ({\tilde{\xi}})^n$. Therefore:
              \begin{equation} \label{step1} 
                   [{\xi}]_{\epsilon_k'} = [({\tilde{\xi}})^n]_{\epsilon_k' - n}. 
             \end{equation}
          Combining Proposition \ref{esspowers} and Lemma \ref{essind}, we get: 
             \begin{equation} \label{step2} 
                   [ (\tilde{\xi})^n]_{\epsilon_k' - n} = n \:  [\tilde{\xi}]_0^{n-1} 
                        [\tilde{\xi}]_{\epsilon_k' - n} = n \: \tilde{a}^{-n+1} [\tilde{\xi}]_{\epsilon_k' - n}. 
             \end{equation}
            The same proposition, combined with the equivalent form $\epsilon_k' - n = 
               \epsilon_k - m$ of the equality (\ref{eq: inv2}),  implies that: 
                \begin{equation} \label{step3} 
                        [\tilde{\xi}]_{\epsilon_k' - n} = 
                           - [\tilde{\eta}]_0^{-\epsilon_k + m -2} [\tilde{\eta}]_{  \epsilon_k - m}  = 
                            -\tilde{a}^{-\epsilon_k + m -2} [\tilde{\eta}]_{  \epsilon_k - m} . 
             \end{equation}
             Combining the equalities (\ref{step1}), (\ref{step2}) and (\ref{step3}), we obtain: 
                \begin{equation} \label{step4} 
                     [{\xi}]_{\epsilon_k'} = 
                           - n \:  \tilde{a}^{-\epsilon_k +  m -n -1} [\tilde{\eta}]_{  \epsilon_k - m}. 
             \end{equation}
             Now, from the analogues of equations (\ref{step1}) and (\ref{step2}) for $\tilde{\eta}$, 
             we get:
              \begin{equation} \label{step5} 
                          [\tilde{\eta}]_{  \epsilon_k - m}  
                          = \frac{1}{m} \:  [\tilde{\eta}]_0^{-m +1} [\tilde{\eta}^m]_{\epsilon_k - m}
                          = \frac{1}{m} \:  [\tilde{\eta}]_0^{-m +1} [{\eta}]_{\epsilon_k} 
                          = \frac{1}{m} \:  \tilde{a}^{-m +1} [{\eta}]_{\epsilon_k} . 
             \end{equation}
            Combining formulae (\ref{step4}) and (\ref{step5}), we deduce the
            inversion formula for the coefficients $[{\xi}]_{\epsilon_k'}$, for $ k \in \{1, ..., d \}$. 
     \end{proof}

   Dividing by $n$ all the terms of the sequence $(m, \epsilon_1, ..., \epsilon_d)$, one gets 
    the sequence of essential exponents of $\eta$ relative to $1$ (see Remark  \ref{ess-P}). 
    Similarly, 
    dividing by $m$ all the terms of the sequence $(n, \epsilon'_1, ..., \epsilon'_d)$, one gets 
    the sequence of essential exponents of $\xi$ relative to $1$. Theorem 
    \ref{inversioncoef} translates therefore in the following inversion formula for the 
    Newton-Puiseux series of $f(x,y)$ relative to $x$ and to $y$, 
    which is the theorem of Halphen-Stolz presented in the introduction: 
    
    \begin{corollary}[{\bf The Halphen-Stolz inversion theorem}]   \label{invPuiseux}
        $\,$
        
        \noindent
          Let  $\eta (x^{1/n}) $ and $\xi(y^{1/m}) $ be Newton-Puiseux series of 
          an irreducible formal power series $f(x,y)\in \C[[x,y]]$ relative to $x$ and $y$ 
          respectively. As before, we assume that ${\eta}(t) = (t \: {\tilde{\eta}}(t))^m$  and 
          ${\xi}(u) = (u \: {\tilde{\xi}}(u))^n$, where  
          $\tilde{\eta}(t), \tilde{\xi}(u)$ are dual series and  $[\tilde{\eta}]_0 = \tilde{a}$. 
          Denote:
             \[  \left \{   \begin{array}{l}
                                  \Ess(\eta (x^{1/n}), 1) = \displaystyle{(m/ n, e_1, \dots, e_d)}, \\
                                  \Ess(\xi(y^{1/m}), 1) = (n / m, e_1', \dots, e_{d'}'). 
                           \end{array}   \right.  \]            
          Then one has the following inversion formulae for exponents and coefficients: 
               \begin{equation} \label{eq: invP0}
                 \displaystyle{ d'=d  }. 
           \end{equation} 
           \begin{equation} \label{eq: invP1}
                 \displaystyle{ m (1 + e_k') = {n}(1 + e_k)  } \,  \mbox{  for  all } \,  k \in \{1, ..., d\}. 
           \end{equation} 
           \begin{equation} \label{eq: invP2}
                 [\xi (y^{1/m}) ]_{n/m} = \tilde{a}^{-n} 	
                 \,                   
                 \mbox{ and }
                \,   \,  
                 [\xi (y^{1/m}) ]_{e_k'} = - \dfrac{n}{m} \:  \tilde{a}^{-( 1  + e_k)n} 
                  [\eta (x^{1/n})]_{e_k} \,  \mbox{  for  all } \,  k \in \{1, ..., d\}. 
             \end{equation} 
    \end{corollary}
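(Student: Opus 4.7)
The plan is to obtain Corollary \ref{invPuiseux} as a direct translation of Theorem \ref{inversioncoef}, using the normalization Lemma \ref{ess-P} to pass between essential exponents of the entire series $\eta, \xi$ relative to $n$, $m$ on the one hand, and essential exponents of the Newton-Puiseux series $\eta(x^{1/n})$, $\xi(y^{1/m})$ relative to $1$ on the other. Beyond this bookkeeping, the mathematical content is already contained in Theorem \ref{inversioncoef}.

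First I would invoke Lemma \ref{ess-P} with the rational factor $q = 1/n$, applied to $E = \Supp(\eta) \subset \N$ and $p = n$. Since $qE = \Supp(\eta(x^{1/n}))$ and $qp = 1$, this gives
\[
\Ess(\eta(x^{1/n}), 1) \;=\; \tfrac{1}{n}\, \Ess(\eta, n) \;=\; \bigl(\tfrac{m}{n},\, \tfrac{\epsilon_1}{n},\, \ldots,\, \tfrac{\epsilon_d}{n}\bigr),
\]
so that $e_k = \epsilon_k/n$ for $k = 1, \ldots, d$. The symmetric argument with $q = 1/m$ applied to $\xi$ yields $e_k' = \epsilon_k'/m$ for $k = 1, \ldots, d'$.

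Next I would translate the three conclusions of Theorem \ref{inversioncoef} one by one. The equality $d' = d$ is immediate. The exponent relation $\epsilon_k' + m = \epsilon_k + n$ becomes, after substituting $\epsilon_k = n e_k$ and $\epsilon_k' = m e_k'$ and rearranging, the identity $m(1 + e_k') = n(1 + e_k)$. For the coefficients I would use the identifications $[\xi(y^{1/m})]_{n/m} = [\xi]_n$, $[\xi(y^{1/m})]_{e_k'} = [\xi]_{m e_k'} = [\xi]_{\epsilon_k'}$ and $[\eta(x^{1/n})]_{e_k} = [\eta]_{\epsilon_k}$, which follow directly from Definition \ref{def:NPseries}. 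The exponent $-n - \epsilon_k$ of $\tilde{a}$ in Theorem \ref{inversioncoef} then rewrites as $-(1 + e_k)\,n$, producing exactly the formula in \eqref{eq: invP2}.

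There is no serious obstacle in the proof of the corollary itself. The only point to verify before appealing to Theorem \ref{inversioncoef} is that the hypotheses on the dual pair $(\tilde{\eta}, \tilde{\xi})$ and on $m$, $n$ are indeed realized in the geometric setting of an irreducible $f(x,y) \in \C[[x,y]]$. This is precisely the construction carried out earlier in the section: the primitivity of the representation $\psi = \eta(x^{1/n})$ fixes $n$, the multiplicity $m = \mathrm{ord}_t \eta$ fixes the $m$-th root $\tilde{\eta}$ with $[\tilde{\eta}]_0 = \tilde{a}$, and Proposition \ref{propbij} together with the identities \eqref{power} and \eqref{powerbis} guarantees that $\xi(y^{1/m})$ built from the dual $\tilde{\xi}$ is a Newton-Puiseux series of $f$ relative to $y$.
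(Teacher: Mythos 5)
Your proof is correct and takes exactly the same route as the paper: the paper derives Corollary \ref{invPuiseux} from Theorem \ref{inversioncoef} precisely by dividing the sequences $\Ess(\eta, n)$ and $\Ess(\xi, m)$ by $n$ and $m$ respectively (via Lemma \ref{ess-P}) and then substituting into the three conclusions of the theorem. Your closing paragraph verifying that the hypotheses of Theorem \ref{inversioncoef} are realized by the construction around Proposition \ref{propbij} is slightly more explicit than the paper's treatment but matches its intent.
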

    
    In the case in which $\tilde{a} = 1$, the inversion formula for 
    the coefficients stated in  Corollary \ref{invPuiseux} may be written 
    in a more symmetric way, easier to remember:
      
    \begin{corollary}  \label{corone}
        Assume moreover that the constant coefficient $\tilde{a}$ of $\tilde{\eta}$ is equal to $1$.
        Then: 
            \[  [\xi (y^{1/m}) ]_{n/m} = 1 = [\eta (x^{1/n}) ]_{m/n} \quad \mbox{ and }  \quad 
     m [\xi (y^{1/m}) ]_{e_k'} + n  [\eta  (x^{1/n}) ]_{e_k} =0  \mbox{ for all } k \in \{1, ..., d\}. \]
    \end{corollary}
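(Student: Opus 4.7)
The plan is to obtain Corollary \ref{corone} as a direct specialization of Corollary \ref{invPuiseux}, setting $\tilde{a} = 1$ in the inversion formulae \eqref{eq: invP2}. There is essentially no new content to prove; the main task is to observe that the asymmetry in \eqref{eq: invP2} (the presence of $\tilde{a}$ only on the right-hand side) disappears in this case, and to supply the one extra computation needed to see that $[\eta(x^{1/n})]_{m/n}$ also equals $1$.

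First, I would invoke \eqref{eq: invP2} with $\tilde{a}=1$: the first equality there gives at once $[\xi(y^{1/m})]_{n/m} = \tilde{a}^{-n} = 1$. To obtain the companion identity $[\eta(x^{1/n})]_{m/n} = 1$, I would go back to the defining relation $\eta(t) = (t\,\tilde{\eta}(t))^m$ from the hypothesis of Theorem \ref{inversioncoef}. Since $\tilde{\eta}$ has constant term $\tilde{a}$, the series $t\,\tilde{\eta}(t)$ has $\tilde{a}\,t$ as its lowest-order term, and therefore $\eta(t)$ has $\tilde{a}^m\, t^m$ as its lowest-order term. Hence $[\eta]_m = \tilde{a}^m$, and substituting $x^{1/n}$ for $t$ yields $[\eta(x^{1/n})]_{m/n} = \tilde{a}^m = 1$.

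Second, for the relation between the coefficients of the non-trivial essential exponents, I would plug $\tilde{a}=1$ into the second formula of \eqref{eq: invP2}. The factor $\tilde{a}^{-(1+e_k)n}$ collapses to $1$, so the identity reduces to
\[
[\xi(y^{1/m})]_{e_k'} \;=\; -\,\frac{n}{m}\,[\eta(x^{1/n})]_{e_k}.
\]
Multiplying both sides by $m$ and rearranging gives $m\,[\xi(y^{1/m})]_{e_k'} + n\,[\eta(x^{1/n})]_{e_k} = 0$, which is exactly the desired identity for every $k \in \{1,\dots,d\}$.

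Main obstacle: there is none of substance; the statement is a cosmetic but useful reformulation of Corollary \ref{invPuiseux} whose only purpose is to display the symmetry between the two Newton–Puiseux series when the leading coefficient is normalised to $1$. The only point to verify carefully is the computation of the dominating coefficient $[\eta]_m = \tilde{a}^m$, which is immediate from $\eta = (t\,\tilde{\eta})^m$.
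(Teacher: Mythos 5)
Your proof is correct and matches the paper's (unstated, implicit) reasoning exactly: Corollary \ref{corone} is obtained from Corollary \ref{invPuiseux} simply by substituting $\tilde{a}=1$ into \eqref{eq: invP2}, and the only extra observation needed is that $[\eta(x^{1/n})]_{m/n}=\tilde{a}^m$, which follows directly from $\eta=(t\,\tilde{\eta})^m$, as you note.
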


  \medskip
 \noindent 
  {\bf Summary of the previous arguments.}
  In order to understand better the line of reasoning we followed till now, 
  the reader may find helpful  the following flow-chart, in which  $f(x,y) \in \C[[x,y]]$ 
  is irreducible: 
    
    \begin{equation} \label{reasoning}
    \xymatrix{
                    & \overset{  \Ess(\eta(x^{1/n}), 1)  }{   \{ \eta(x^{1/n}) \}    }
                    \ar@{<->}[rr]_(.52){(\mathrm{Lm.} \ref{ess-P})}^(.52){x=t^n}      &  & 
                    \overset{ \Ess(\eta(t), n)}{ \{ \eta(t) \}  }    &  & 
                     \overset{ \Ess(\tilde{\eta}(t), \gcd(m,n) )}{    \{ \tilde{\eta}(t) \}  }\ar[ll]^(.55){\begin{psmallmatrix}
                                    ( \mathrm{Prop.} \ref{esspowers} (\ref{esspow}),  \\
                                     \mathrm{Lm.} \ref{divpower})
                                     \end{psmallmatrix}}_(.55){\eta=(t \tilde{\eta})^m} 
                             \ar@{<->}[dd]_{(  \mathrm{Prop.} \ref{esspowers} ( \ref{essdual})  )}^{\begin{array}{c}
                                                  u = t \:  \tilde{\eta}(t)  \\  
                                                  \Updownarrow \\
                                                  t = u \:  \tilde{\xi}(u)
                                             \end{array} }\\
           f(x,y) \; 
           \ar[ur] \ar[dr]        &     &     &   \\
                    & \underset{\Ess(\xi(y^{1/m}), 1)} {\{ \xi(y^{1/m}) \}   }
                          \ar@{<->}[rr]^(.52){(\mathrm{Lm.} \ref{ess-P})}_(.52){y=u^m}      &      &  
                      \underset{ \Ess(\xi(u), m)}{  \{ \xi(u) \} } &   & 
                 \underset{ \Ess(\tilde{\xi}(u), \gcd(m,n) )}{ \{ \tilde{\xi}(u) \} } \ar[ll]_(.55){\begin{psmallmatrix}
                                     (\mathrm{Prop.} \ref{esspowers} (\ref{esspow}),  \\
                                    \mathrm{Lm.} \ref{divpower})
                                     \end{psmallmatrix}}^(.55){\xi=(u \tilde{\xi})^n} 
               }
    \end{equation}
    
Let us explain this diagram: 
   \begin{itemize}
          \item    From the irreducible series $f(x,y) \in \C[[x,y]]$,    
          one gets symmetrically two sets of Newton-Puiseux series 
                    $ \{ \eta(x^{1/n}) \} $ and $ \{ \xi(y^{1/m}) \}$. The first one has 
                    $n$ and the second one $m$ elements. 
           \item Follow now two analogous sequences of transformations of those sets, 
              indicated in the diagram horizontally. We describe them only for the upper 
              line of the diagram. 
           \item  The change of variables $x=t^n$, indicated above the corresponding 
           doubly-arrowed horizontal segment, puts 
              the set $ \{ \eta(x^{1/n}) \} $ in bijection with 
              the set of entire series $\{ \eta(t) \}$.  
            \item Lemma \ref{ess-P}, mentioned below the same  
               arrow, allows to pass from $\Ess(\eta(x^{1/n}), 1)$ to $\Ess(\eta(t), n)$. 
               The corresponding coefficients are unchanged. 
           \item One extracts in all possible ways the $m$-th roots of the series 
                $\eta(t)$. Then  one divides the result by $t$, arriving at a set 
                $\{ \tilde{\eta}(t) \}$ with $mn$ elements. The composition of the 
                two operations is expressed by the formula $\eta=(t \:  \tilde{\eta})^m$, 
                written above the corresponding arrow. 
           \item Combining Proposition \ref{esspowers} (\ref{esspow}) with Lemma 
              \ref{divpower}, one passes from the sequence $\Ess(\eta(t), n)$ to 
              $  \Ess(\tilde{\eta}(t), \gcd(m,n) )  $ and one relates also the 
              corresponding coefficients. 
            \item There is a canonical bijection between the two sets $\{ \tilde{\eta}(t) \}$ 
                and $\{ \tilde{\xi}(u) \}$, indicated by the left vertical double-arrowed segment. 
                This bijection associates two series $\tilde{\eta}(t)$ and $ \tilde{\xi}(u)$ 
                whenever $\tilde{\eta}(t)$ and   $ \tilde{\xi}(u)$ are dual of each other, 
                which may be expressed by the two equivalent equalities marked at 
                the right of the vertical segment. 
            \item Proposition \ref{esspowers} (\ref{essdual}), indicated to the left of 
               the same segment, shows that the two sequences are equal, and allows 
               to relate the corresponding coefficients. Note that this proposition allows 
               in fact to relate the coefficients corresponding to \emph{all} the irreducible  
               exponents of the two dual series, not only those which are essential 
               relative to $\gcd(m,n)$. This is understandable if one thinks that, 
               reading now the diagram from right to left, 
               one may start from \emph{any} pair $(\tilde{\eta}(t), \tilde{\xi}(u))$ 
               of dual series and only \emph{afterwards} choose the pair 
               of positive integers $(m,n)$, independently of the choice of the two dual series. 
               One arrives then at the series $f(x,y)$ by taking either the minimal polynomial 
               of $\eta(x^{1/n})$ or that of $\xi(y^{1/m})$, and multiplying it with an invertible 
               element of the ring $\C[[x,y]]$.
    \end{itemize}

        \begin{remark} \label{compcharess}
        Using Lemma \ref{charess}, the Halphen-Stolz inversion theorem 
        (Corollary \ref{invPuiseux})  may also be expressed 
        in terms of the characteristic exponents of $\eta$ and $\xi$. 
        That lemma shows that the sequences of characteristic exponents 
        of $\eta$ and of $\xi$ do not have necessarily the same lengths, which has as 
        consequence the fact that the elements of the two series which are related 
        have not necessarily the same position in both sequences. For this reason, 
        it is easier to express the inversion formulae as we have done in Theorem 
        \ref{inversioncoef} and in Corollary \ref{invPuiseux}, in terms of the essential exponents. 
        \end{remark}
    
    \begin{remark}  \label{remterm}
       The part of Corollary \ref{invPuiseux} concerning the exponents 
       is usually expressed nowadays 
       in terms of the characteristic exponents and is sometimes attributed to Abhyankar's paper  
       \cite{A 67} of 1967 or to Zariski's paper \cite{Z 68} of 1968. 
       In fact, it was already stated precisely in terms of the sequences of 
       essential exponents relative to $1$ (called there ``\emph{exposants caract\'eristiques}"
       from their second term on) 
       by Halphen \cite[page 91]{H 76} in 1876. But Halphen stated also 
       the previous formulae (of course, with different notations) for the inversion 
       of the corresponding \emph{coefficients}. He did not prove those formulae, and 
       as far as we know, the unique proof was provided by Stolz 
       \cite[page 133]{S 79} in 1879. We searched new proofs 
       because we were not fully convinced by Stolz' arguments and because we wanted 
       to extend the theorem to higher dimensions. 
    \end{remark}

    \medskip
    \subsection{The second proof of the Halphen-Stolz theorem} 
\label{sect:secondproof}
$\:$ 
\medskip
    
    Let us pass now to our second proof of Theorem \ref{inversioncoef}. 
    Corollary \ref{invPuiseux} concerns only the terms of the two Newton-Puiseux series 
    whose exponents are essential relative to $1$. 
    We explain now a way to get formulae for \emph{all} the coefficients of $\xi$ 
    as rational fractions of those of $\eta$. 
    
     \medskip
     
  We recall first a form of {\em Lagrange's inversion formula} which, given two reciprocal 
  entire series  $X$ and $Y$, 
 allows to express the coefficients of the integral powers of $X$  in terms of those of $Y$. 
 Several proofs of it may be found in \cite[Theorem 5.4.2]{S 99}, and historical explanations 
 in \cite[pages 67-68]{S 99}. Let us mention only that the founding result for 
 this kind of formulae was stated by Lagrange in \cite[Par. 16]{L 70}.

  \begin{theorem}[{\bf Lagrange inversion formula}] \label{Lagrinv} 
    $\,$
    
    \noindent
    Let $X(u) \in u \: \C[[u]]^*$ and $Y(t) \in t \: \C[[t]]^*$ be two reciprocal series. 
     For any $p, q \in \Z$, one has: 
       \[p \cdot [X(u)^q]_p = q \cdot [Y(t)^{-p}]_{-q}. \]
  \end{theorem}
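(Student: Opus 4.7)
My plan is to prove this via the standard device of formal residues. The edge cases $p = 0$ or $q = 0$ are immediate (both sides vanish), so I assume $pq \neq 0$. The first step is to rewrite both coefficients as formal residues: for any $h \in \C((u))$, one has $[h(u)]_p = [u^{-p-1} h(u)]_{-1}$. Applied to $h = X^q$, this reduces the left-hand side of the identity to a residue computation in the variable $u$.

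The second step is to perform a change of variables in this residue via $u = Y(t)$. Since $Y(t) \in t\,\C[[t]]^*$ has order exactly one, the substitution $u \mapsto Y(t)$ induces a well-defined automorphism of $\C((t))$ carrying $X(u)$ to $t$, and the residue transforms according to the formal change-of-variables identity
\[
[h(u)]_{-1} = [h(Y(t)) \cdot Y'(t)]_{-1} \quad \text{for all } h \in \C((u)).
\]
By $\C$-linearity, verifying this rule reduces to checking $[Y(t)^n Y'(t)]_{-1} = \delta_{n,-1}$: for $n \neq -1$ one has $Y^n Y' = \tfrac{1}{n+1}(Y^{n+1})'$, and the residue of any formal derivative vanishes; for $n = -1$, writing $Y(t) = t\,\phi(t)$ with $\phi \in \C[[t]]^*$ gives $Y'/Y = 1/t + \phi'/\phi$, and $\phi'/\phi \in \C[[t]]$ has zero residue. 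Applying the rule with $h(u) = u^{-p-1} X(u)^q$ and using $X(Y(t)) = t$ yields
\[
[X(u)^q]_p = [Y(t)^{-p-1} \cdot t^q \cdot Y'(t)]_{-1}.
\]

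The third step is formal integration by parts, which is valid because the derivative of any Laurent series has vanishing residue: for all $f, g \in \C((t))$, $[fg']_{-1} = -[f'g]_{-1}$. Since $\tfrac{d}{dt} Y(t)^{-p} = -p\,Y(t)^{-p-1} Y'(t)$, this gives
\[
p \cdot [Y(t)^{-p-1} t^q Y'(t)]_{-1} = -\bigl[t^q \cdot \tfrac{d}{dt}(Y(t)^{-p})\bigr]_{-1} = \bigl[q\, t^{q-1} Y(t)^{-p}\bigr]_{-1} = q \cdot [Y(t)^{-p}]_{-q},
\]
and combined with the second step this yields the Lagrange inversion identity.

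The main obstacle, and the only step requiring genuine care, is the rigorous justification of the change-of-variables formula for formal residues. The potential worry is that several of the exponents $n$ appearing in the verification $[Y(t)^n Y'(t)]_{-1} = \delta_{n,-1}$ are negative; but since $Y$ has order exactly one, each $Y^n$ is a well-defined element of $\C((t))$ whose lowest exponent is precisely $n$, so every manipulation above stays inside $\C((t))$. Apart from this bookkeeping, the argument is a short algebraic calculation that mirrors the classical complex-analytic residue proof of Lagrange inversion.
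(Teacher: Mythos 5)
Your proof is correct and complete. The paper itself does not prove Theorem \ref{Lagrinv}; it simply states the result and refers the reader to Stanley's book for proofs, so there is no proof in the paper to compare against.

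That said, your argument via formal residues is one of the standard approaches found in the literature (including in the reference the paper cites), and every step is sound: the edge cases $pq=0$ are trivial; the reduction of $[h]_p$ to a residue is immediate; the change-of-variables rule $[h(u)]_{-1}=[h(Y(t))Y'(t)]_{-1}$ is correctly reduced, by linearity and by noting that only finitely many terms contribute to any fixed $t$-degree, to the monomial check $[Y^nY']_{-1}=\delta_{n,-1}$, which you verify correctly in both cases (derivative of $Y^{n+1}/(n+1)$ when $n\neq-1$, and the logarithmic derivative computation $Y'/Y = t^{-1} + \phi'/\phi$ when $n=-1$); and the final integration by parts, using that derivatives in $\C((t))$ have vanishing residue, closes the argument cleanly. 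Your remark about the bookkeeping for negative powers $Y^n$ is also right: since $\mathrm{ord}_t Y = 1$, each $Y^n$ lies in $\C((t))$ with order exactly $n$, so all manipulations stay inside $\C((t))$.

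One small presentational point you could add for completeness: when you write $h(Y(t))$ for $h\in\C((u))$, it is worth saying explicitly (as you implicitly use) that this series is well-defined because for each fixed $m$ the coefficient of $t^m$ receives contributions from only finitely many $n$ in $\sum_n a_n Y(t)^n$, precisely because $\mathrm{ord}_t Y^n = n$ and $h$ has only finitely many negative-degree terms. You gesture at this in your final paragraph, and it is enough, but stating it where the change-of-variables rule is introduced would make the "by $\C$-linearity" step fully rigorous at the spot where it is invoked.
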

  
  Note that the lifting to a negative integral power produces a meromorphic series which 
  has a finite number of terms with negative exponents. 
  
  \medskip
  
 Let us apply Theorem \ref{Lagrinv} in our context.

Recall that $\tilde{a} \in \C^*$ is the  constant term of $\tilde{\eta}$,  hence 
by formula (\ref{power}), we may write: 
    \begin{equation}  \label{factor}
        {\eta}(t) = \tilde{a}^m t^m \left( 1 + \sum_{k > m} c_k t^{k-m} \right).  
    \end{equation}
 Therefore: 
     \begin{equation}  \label{formtilde}
            \tilde{\eta}(t) = \tilde{a} \left(1 + \sum_{k > m} c_k t^{k-m}\right)^{1/m}, 
    \end{equation}
  in which the right-hand-side can be computed using the generalized binomial expansion:
    \begin{equation} \label{binom}
        (1+x)^{r} := 1 + \sum_{k \in \N^*} \binom{r}{k} x^k , \:  \forall \:  r \in \R,
    \end{equation}
 where:
   \begin{equation} \label{bindef}
        \binom{r}{k} = \dfrac{r(r-1) \cdots (r - k+1)}{k !}, \:  \mbox{ for all } r \in \R.
   \end{equation}

Using formula (\ref{formtilde}), one gets the following consequence of Theorem \ref{Lagrinv}, 
which allows to compute the coefficients 
of the Newton-Puiseux series $\xi$ of the irreducible power series $f(x,y)\in \C[[x,y]]$ 
relative to $y$ in terms of those $\eta$ 
relative to $x$. Note that one gets rational fractions whose numerators are polynomials 
with rational coefficients 
in the coefficients $c_k$ and whose denominators are positive integral powers of $\tilde{a}$: 
     
 \begin{proposition} \label{invexpl}
     Assume that:
        \[ \eta (x^{1/n})  = \tilde{a}^m  x^{m/n} \left(  1+ \sum_{k > m}c_k x^{\frac{k-m}{n}} \right).\]
     Then one has the following formula for the coefficients $ [\xi]_{\frac{q}{m}} $  of 
     the corresponding Newton-Puiseux series $\xi(y^{1/m}) \in \C [[y^{1/m}]]$, 
     for all integer $q \geq n$: 
         \[ \displaystyle{  [\xi]_{\frac{q}{m}} = \frac{n}{q} \tilde{a}^{-q} \left[ 1 + \sum_{i \geq 1}
              \binom{-q/m}{i}  \left( \sum_{s> m} c_s x^{\frac{s-m}{n}}\right)^i \right]_{ -1 + \frac{q}{n}}. }\]
 \end{proposition}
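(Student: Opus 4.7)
The plan is to apply the Lagrange inversion formula (Theorem \ref{Lagrinv}) to the pair of reciprocal entire series provided by the duality construction, and then to unpack the right-hand side by means of the generalised binomial expansion (\ref{binom}).

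First, I would set $X(u) := u\,\tilde{\xi}(u) \in u\,\C[[u]]^*$ and $Y(t) := t\,\tilde{\eta}(t) \in t\,\C[[t]]^*$. By the equivalence (\ref{invser}) these series are reciprocal, and by (\ref{powerbis}) one has $\xi(u) = X(u)^n$. Consequently,
\[ [\xi]_{q/m} \;=\; [\xi(u)]_q \;=\; [X(u)^n]_q. \]
Applying Theorem \ref{Lagrinv} with its two integer parameters specialised to $q$ and $n$ respectively yields
\[ [X(u)^n]_q \;=\; \frac{n}{q}\,[Y(t)^{-q}]_{-n}. \]

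Next, I would combine $Y(t) = t\,\tilde{\eta}(t)$ with the formula (\ref{formtilde}) for $\tilde{\eta}$ and the generalised binomial expansion (\ref{binom}) to write
\[ Y(t)^{-q} \;=\; t^{-q}\,\tilde{a}^{-q}\left(1 + \sum_{k>m} c_k\,t^{k-m}\right)^{-q/m} = t^{-q}\,\tilde{a}^{-q}\left[1 + \sum_{i\geq 1}\binom{-q/m}{i}\Big(\sum_{s>m} c_s\,t^{s-m}\Big)^{i}\right]. \]
The explicit factor $t^{-q}$ shifts coefficient extraction: extracting the coefficient of $t^{-n}$ in $Y(t)^{-q}$ amounts to extracting that of $t^{q-n}$ in the bracketed expression. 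Translating $t$ back into $x^{1/n}$, so that $t^{q-n} = x^{-1 + q/n}$ and $c_s\,t^{s-m} = c_s\,x^{(s-m)/n}$, yields precisely the identity of the proposition.

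The argument is very short, so the main point to watch is the book-keeping of exponents: one must check that the meromorphic series $Y(t)^{-q}$ has only finitely many terms of negative exponent, so that $[Y(t)^{-q}]_{-n}$ is well defined in the sense required by Theorem \ref{Lagrinv}, and that for each fixed $q \geq n$ only finitely many values of $i$ contribute to the coefficient of $x^{-1 + q/n}$ in the bracketed sum. Both facts follow at once from the factorisation $Y(t)^{-q} = \tilde{a}^{-q}\,t^{-q}\cdot(1 + W)^{-q/m}$ with $W := \sum_{k>m} c_k\,t^{k-m} \in t\,\C[[t]]$, together with the observation that the order of $W^i$ in $t$ grows at least linearly with $i$. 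I do not expect any other obstacle.
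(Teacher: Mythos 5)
Your proof is correct and follows essentially the same route as the paper's: the same specialisation $(p,q)\mapsto(q,n)$ of Theorem \ref{Lagrinv} applied to the reciprocal pair $u\,\tilde{\xi}(u)$ and $t\,\tilde{\eta}(t)$, followed by the factorisation $Y(t)^{-q}=t^{-q}\tilde{a}^{-q}(1+W)^{-q/m}$ via (\ref{formtilde}) and the binomial expansion (\ref{binom}), and finally the change of variable $t\to x^{1/n}$. The short verification you add at the end (that $Y(t)^{-q}$ is genuinely meromorphic and that only finitely many $i$ contribute to each coefficient) is not spelled out in the paper but is a sound and welcome check.
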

 
 \begin{proof}
    By Theorem \ref{Lagrinv} applied after replacing  the pair $(p,q)$ by $(q,n)$,  
    we have:   
    \[ q \cdot [u^n \tilde{\xi}(u)^n]_q  = n \cdot  [ t^{-q} \tilde{\eta}(t)^{-q}]_{-n}.\]
    We get: 
      \begin{equation} \label{Lag-2}
        \begin{array}{lcl} 
             q \cdot [u^n \tilde{\xi}(u)^n]_q 
                 &    \stackrel{(\ref{formtilde})}{=}   
                   & \displaystyle{ n \tilde{a}^{-q} \left[\left( t^{-q}
                       \left(1 + \sum_{s > m} c_s t^{s-m}\right)^{-q/m} \right)
                         \right]_{-n} },
           \end{array} 
         \end{equation}
         and then by (\ref{binom}): 
         \begin{equation} \label{Lag}
           q \cdot [\xi (u) ]_q =  q \cdot [u^n \tilde{\xi}(u)^n]_q  = 
                 n \tilde{a}^{-q} \left[\left( 1 + \sum_{i \geq 1}
              \binom{-q/m}{i}  \left( \sum_{s > m} c_s t^{s-m}\right)^i \right) \right]_{q-n}.
         \end{equation}
       It is enough now to divide by $q$, to replace $u$ by $y^{1/m}$ and $t$ by $x^{1/n}$ in 
       order to get the desired formula. 
 \end{proof}

\medskip 

As a corollary, we obtain:

\medskip

\noindent
\textbf{Second proof of Theorem \ref{inversioncoef}.}
Recall the notation $\Ess(\eta, n) = (m, \epsilon_1, \dots, \epsilon_d)$.
We set then:
\[
\hat{\epsilon}_0 = {n}, \quad  \hat{\epsilon}_1= 
      \epsilon_1 -m +n, \quad  \dots,  \quad \hat{\epsilon}_{d} = \epsilon_d - m+ n.
\] 
We prove  first by induction on the integer $q \geq n$ that if 
\begin{equation} \label{eq:k}
\hat{\epsilon}_{k} \leq q < \hat{\epsilon}_{k+1} \mbox{ for some } k \in \{0, \dots, dÊ\}, 
\end{equation}
then the terms of the sequence $\Ess(\xi, m)$ 
which are lower than or equal to $q$ are precisely 
$\hat{\epsilon}_0 = {n}, \hat{\epsilon}_1, \dots, \hat{\epsilon}_k$. 
Here the case $k=d$ in (\ref{eq:k}) means simply that  $\hat{\epsilon}_{d} \leq q$.

\medskip

If $q = n$,  we get from (\ref{Lag}) that $[\xi]_n =  \tilde{a}^{-n} \ne 0 $ is the dominant term of 
the series $\xi$, hence  the assertion (\ref{eq:k}) holds by Definition \ref{essell-2}.  
Assume that (\ref{eq:k}) holds for some $q>n$. We distinguish two cases: 

\medskip

$\bullet$ \emph{Assume that  $\hat{\epsilon}_{k} <  q$, so $\hat{\epsilon}_k  \leq q-1$.} 
Then, by the induction hypothesis applied to $q-1$, we have that 
the terms of the sequence $\Ess(\xi, m)$ 
which are lower than or equal to 
$q-1$ are precisely $\hat{\epsilon}_0 = {n}, \hat{\epsilon}_1, \dots, \hat{\epsilon}_k$. 
If $[\xi (u) ] _q = 0$, then there is nothing to prove. Assume that $[\xi (u) ] _q \ne 0$.
  Since $  q - n  < \epsilon_{k+1} - m$ by (\ref{eq:k}), the exponent of 
a term appearing in the polynomial $(  \sum_{ m < s < \epsilon_{k+1}} c_s t^{s- m } )^i $
must belong to  the group $\Z\{ n, m, \epsilon_1, \dots, \epsilon_{k}  \}$,
by the definition of the essential exponents $\Ess(\eta, n)$. 
We deduce from this and the right hand side of the equality (\ref{Lag}) 
that  $ q $ must belong to the subgroup  $\Z\{ n, m, \epsilon_1, \dots, \epsilon_{k}  \}$.
Since by definition we have the equality:  
\begin{equation} \label{eq:groups}
\ZÊ\{ n, m, \epsilon_1, \dots, \epsilon_{k}  \} =  \Z\{ n, m, \hat{\epsilon}_1, \dots, \hat{\epsilon}_{k}  \}, 
\end{equation}
$q$ cannot be an essential exponent of $\xi$ with respect to $n$ (see Definition \ref{essell-2}). 

\medskip 

$\bullet$ \emph{Assume that $\hat{\epsilon}_{k} =  q$}.  
Then, by the induction hypothesis applied to $q-1$, we have that 
the terms of the sequence $\Ess(\xi, m)$ 
which are lower or equal to  $q-1$ are precisely 
$\hat{\epsilon}_0 = {n}, \hat{\epsilon}_1, \dots, \hat{\epsilon}_{k-1}$.
We have to prove that the coefficient $[\xi(u)]_q$ does not vanish.  
Notice that there is a term  with exponent equal to $q- n$ 
appearing in the polynomial $(  \sum_{ m < s \leq \epsilon_{k}} c_s t^{s- m } )^i $ 
if and only if $i =1$ and then this term is equal to ${c_{\epsilon_k}}  t^{q- n}$.
Indeed, arguing as in the previous case, we see that 
any other term would provide an expansion of $  \epsilon_{k}  = q- n +m $ in the group 
$\Z\{ n, m, \epsilon_1, \dots, \epsilon_{k-1}  \}$, contradicting the definition of the 
essential exponent $\epsilon_k$.  
Notice that $c_{\epsilon_k} = \tilde{a}^{-m} [\eta]_{\epsilon_k}$, by (\ref{factor}). 
It follows from (\ref{Lag}) that:
\[ 
q \cdot [\xi]_{q} = 
 n \tilde{a}^{-\epsilon_k + m -n}  \left(\frac{- q}{m}\right) \tilde{a}^{-m} [\eta]_{\epsilon_k}  = 
 - q \frac{n}{m}  \tilde{a}^{-\epsilon_k  -n}  [\eta]_{\epsilon_k} ,
 \]
thus $[\xi]_{q} = - \frac{n}{m} \tilde{a}^{-\epsilon_k -n}[\eta]_{\epsilon_k}$ is nonzero.
This finishes the proof of the assertion. 

Theorem \ref{inversioncoef} is proved, since we have also proved the 
inversion formula (\ref{eq: inv3}) for the coefficients.  

\hfill $\Box$

\medskip

 \begin{example}  \label{exinv} 
       Let us consider again the Newton-Puiseux series $\eta(x^{1/4})$ of Example \ref{ex:varnot}. 
           Denote by $\xi(y^{1/6})$ a Newton-Puiseux series 
      corresponding by inversion to $\eta$, through the bijection described 
      in Proposition \ref{propbij}. 
      Applying Proposition \ref{invexpl}, we get:
        $$[\xi(y^{1/6})]_{p/6} = \frac{4}{p} \left[  1 + \sum_{i \geq 1} 
            \binom{-p/6}{i}  (  c x^{\frac{1}{4}})^i \right]_{ -1 + \frac{p}{4}} = 
            \frac{4}{p}   \binom{-p/6}{p-4}    c^{p-4}. $$
      That is: 
          $$\xi(y^{1/6}) = \sum_{p \geq 4} \frac{4}{p}   \binom{-p/6}{p-4}    c^{p-4} y^{p/6}.$$
       The first two exponents in the support $\Supp(\xi(y^{1/6}))$ are therefore $4/6 = 2/3$ 
       and $5/6$, which shows that they constitute the characteristic sequence of 
       $\xi$. The corresponding terms of $\xi$ are, according to the previous formula: 
          $ 1 \cdot y^{2/3}, \:   (-\frac{2}{3} c) \cdot y^{5/6}$.
       One may verify then immediately the correcteness of the formulae stated 
       in the Halphen-Stolz inversion theorem (Corollary \ref{invPuiseux}).         
 \end{example}

\begin{remark}
    In order to compute recursively the coefficients of $\xi(y)$ starting from those of $\eta(x)$, 
     one could also use the method explained by Borodzik \cite{B 12}.  
\end{remark}

\begin{remark}
    We believe that one can use Abhyankar's \cite[First Inversion Theorem, page 111]{A 10} 
    in order to obtain a third proof of the Halphen-Stolz inversion theorem. The approach of 
    that paper seems to be similar in spirit to our first approach. 
 \end{remark}

\section{Generalization to an arbitrary number of variables} \label{arbnumb}

In this section we generalize our first proof of the Halphen-Stolz theorem to an 
arbitrary number of variables. We formulate the needed generalizations of the 
definitions and propositions used in that proof. We only sketch their proofs, 
insisting in the differences with respect to the one-variable case.  
Finally, we explain how our result generalizes  Lipman's inversion theorem for the 
\emph{characteristic exponents} of \emph{quasi-ordinary branches}.

\medskip 

Throughout the section, we consider a fixed number $h \in \N^*$ and we work with the $\Q$-vector 
space $\Q^h$  and various free subgroups of it of rank  $h$, which we will call 
briefly {\bf lattices} of $\Q^h$. We denote by 
$(\nu_1, \nu_2, \dots , \nu_h)$ the canonical basis of $\Q^h$.

\medskip

\subsection{Irreducible exponents of subsets of $\Q^h$ with bounded denominators}
\label{irrexpmany}
$\,$ 
\medskip

\medskip

The notions of \emph{set with bounded denominators} (Definition \ref{defnumsem}) 
and of its \emph{irreducible elements} (Definition \ref{defindec}) extend immediately 
from subsets of $\Q_+$ to subsets of $\Q_+^h$.
If $E$ is such a set, it generates again a semigroup $\N^* \: E \subset \Q_+^h$ 
and a group $\Z \: E \subset \Q^h$. 

Lemma \ref{twoind} (1) remains true in this setting:

\begin{lemma} \label{twoind-S}
If $E \subset \Q_+^h$ is a set with bounded denominators, then 
$\Irr(E) = \Irr(\N^*\: E)$ and this set is the minimal 
    generating set of the semigroup $\N^* \: E$, relative to the inclusion partial order 
    between its generating sets. 
\end{lemma}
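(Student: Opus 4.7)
The plan is to follow step-by-step the proof of Lemma \ref{twoind}(1), adapting only the one place where the one-dimensional argument relied on a well-ordering of $\N$. As in that proof, I would begin by clearing denominators: choose $N \in \N^*$ with $E \subset \frac{1}{N} \N^h$, and replace $E$ by $N E$. Both $\Irr(\cdot)$ and $\N^*(\cdot)$ are equivariant under scaling by a positive integer, so this reduction is harmless.

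Once $E \subset \N^h$, the set-theoretic equality $\Irr(E) = \Irr(\N^* E)$ follows from the same two formal inclusions as in the scalar case: an irreducible element of $E$ cannot become reducible in $\N^* E$ because each $\N^* E$-summand can be further expanded into $E$-summands, producing a nontrivial $E$-decomposition; conversely, an element of $\Irr(\N^* E)$ must itself lie in $E$ (otherwise, its defining representation as an $E$-sum is a nontrivial $\N^* E$-decomposition), and it stays irreducible in $E$ since $E \subset \N^* E$. Next, to check that $\Irr(\N^* E)$ actually generates $\N^* E$, I would strong-induct on the total-degree map $|v| = v_1 + \cdots + v_h \in \N$: if $s \in \N^* E \setminus \{0\}$ is not irreducible, write $s = s_1 + s_2$ with $s_i \in \N^* E \setminus \{0\}$, whence $|s_1|, |s_2| < |s|$, and the induction decomposes each $s_i$ into elements of $\Irr(\N^* E)$. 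Finally, minimality is transferred verbatim from the one-variable case: given another generating set $A$ and $e \in \Irr(\N^* E)$, any expression $e = a_1 + \cdots + a_k$ with $a_i \in A$ and all $a_i \neq 0$ must have $k = 1$, forcing $e \in A$.

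The only point at which the scalar proof does not directly transfer is the generation step: in dimension one the well-ordering of $\N$ is automatic, while $\N^h$ carries no canonical total order. I expect this to be the main (and really the only) obstacle, and the fix is to pull the induction back along the total-degree homomorphism $\N^h \to \N$, which is a semigroup map strictly positive on $\N^h \setminus \{0\}$; this provides the well-founded measure that the higher-dimensional setting lacks, without altering the structure of the argument.
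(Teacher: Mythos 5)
Your proof is correct and follows essentially the same approach the authors intend, namely a direct transcription of the proof of Lemma~\ref{twoind}(1): clear denominators, check the two inclusions $\Irr(E)\subset\Irr(\N^*E)$ and $\Irr(\N^*E)\subset\Irr(E)$ formally, and show minimality by observing that any $a\in\Irr(\N^*E)$ must appear in any generating set. You have in fact been slightly more careful than the paper: the one-variable proof of Lemma~\ref{twoind}(1) silently uses that $\Irr(\N^*E)$ \emph{does} generate $\N^*E$ (otherwise ``minimal generating set'' is not well-defined), and you correctly observe that in $\N^h$ with $h\geq 2$ this requires replacing the well-ordering of $\N$ by a well-founded measure such as the total degree $|v|=v_1+\cdots+v_h$; that is exactly the right fix and completes the argument.
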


Notice that point (2) of  Lemma \ref{twoind}  is not necessarily true 
for $h \geq 2$, as shown by the following standard example: 

\begin{example}  \label{exnonfin} 
   Take $E = (\N^*)^2$. Then $\Irr(E) = \left(\N^* \times \{1\} \right)  \cup 
      \left( \{1\} \times \N^* \right).$ Therefore $\Irr(E)$ is infinite. 
\end{example}

When $h \geq 2$, we will need also to use special order relations 
on the group $(\Q^h, +)$: 

\begin{definition} \label{monord}
   An {\bf additive order} on $\Q^h$ is a partial order relation $\preceq$ on 
   $\Q^h$ satisfying:
     \begin{enumerate}
          \item  $\preceq$ is a total order; 
          \item  if  $ \alpha, \beta, \gamma \in \Z^h$ and $\alpha \preceq \beta$, then 
                      $\alpha + \gamma \preceq \beta + \gamma$.
     \end{enumerate}
The additive order $\preceq$ \textbf {dominates} a set $\theta \subset \Q^h$ if 
           any non empty subset of  $\theta$ with bounded denominators
             has a minimum relative to $\preceq$.              
\end{definition}

\begin{remark} 
If $\preceq$ is an additive order of $\Q^h$,  then there exist an integer $s \in [1, h]$, 
linear forms $u_1, \dots,  u_s \in (\R^h)^*$, and an increasing injective group morphism:
\[
(\Q^h , \preceq) \to (\R^h, \leq_{\mathrm{lex}}), \quad v \to (u_1( v), \dots, u_s (v) ), 
\]
where $\leq_{\mathrm{lex}}$ denotes the lexicographical order (see \cite[Theorem 2.5]{R 86}). 
The lexicographical order is  additive and dominates $\Z_+^h$. 
More generally, if $\preceq$ dominates $\Z_+^h$, then  $\preceq$ defines 
a well-order on $\Z_+^h$, hence 
Definition \ref{monord} is a generalization of the notion of \emph{term order} explained in 
\cite[Chap. 2.2]{CLO 07}. 
\end{remark}

Definition \ref{monord} allows to generalize the notion of \emph{sequence of 
essential elements relative to an integer $p$} (Definition \ref{essell}) in the following way: 

 \begin{definition}  \label{essellgen} 
    Let us consider a set $E \subset \Q_+^h$ with bounded denominators. 
    Let $M$ be a lattice of $(\Q^h, +)$  and $\preceq$ be an additive
    order on $\Q^h$ dominating its subset  $\Q^h_+$.  Then the {\bf sequence} 
    \linebreak 
     $\Ess(E,M, \preceq) := \left(\Ess(E,M, \preceq)_l \right)_{l}$
     {\bf of essential elements of $E$ relative to $M$}  is defined 
     inductively by:
         \begin{itemize}
             \item $\Ess(E,M, \preceq)_0 := \min E$. 
             \item If $l \geq 1$, then the term $\Ess(E,M, \preceq)_l$ is defined if and only if 
                 $ E$ is not included in the group \linebreak 
                     $M + \Z\{\Ess(E,M, \preceq)_0, ...,  \Ess(E,M, \preceq)_{l -1} \}$. In this case: 
               \[  \Ess(E,M, \preceq)_l := \min \left(  E \setminus \left( 
                      M + \Z\{\Ess(E,M, \preceq)_0, \dots, \Ess(E,M, \preceq)_{l -1}  \} \right) \right).  \] 
         \end{itemize}
    \end{definition}

    One gets Definition \ref{essell} by taking $h =1$, $M = p\,  \Z $ and 
    $\preceq$ to be the unique additive order on ($\Q$, +) which dominates $\Q_+$, 
    that is, the usual order. Indeed, then 
      the sequence $\Ess(E, p \, \Z, \preceq)$ defined according to Definition \ref{essellgen} is 
      precisely the sequence $\Ess(E, p)$ defined according to Definition \ref{essell}.

  Lemma \ref{finess} about the finiteness of the sequences $\Ess(E, M)$ 
  holds also in our larger context:

  \begin{lemma}  \label{finessgen}
       Assume that the subset $E \subset \Q_+^h$ has bounded denominators, that $M$ is a 
      lattice of $\Q^h$ and that $\preceq$ is an additive order 
       dominating $\Q^h_+$. 
       Then the sequence of essential exponents $\Ess(E,M, \preceq)$ of $E$ 
       relative to $M$ is finite. 
  \end{lemma}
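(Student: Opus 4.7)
The plan is to adapt the argument of Lemma \ref{finess}, replacing the ambient Noetherian group $\Z$ (in which ascending chains of subgroups are stationary) by an appropriate finitely generated subgroup of $\Q^h$. The key observation will be that all essential elements, together with a $\Z$-basis of $M$, fit inside a common lattice of $\Q^h$.

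First I would exploit the bounded-denominators hypothesis on $E$: there exists $N \in \N^*$ with $E \subset \frac{1}{N}\Z^h$. Since each essential element $\epsilon_l := \Ess(E, M, \preceq)_l$ (for $l$ at which it is defined) belongs to $E$, we have $\Z\{\epsilon_0, \dots, \epsilon_l\} \subset \frac{1}{N}\Z^h$. Setting $L := M + \frac{1}{N}\Z^h$, the group $L$ is a finitely generated torsion-free abelian subgroup of $\Q^h$ (as the sum of two finitely generated subgroups of $\Q^h$), hence free of finite rank. In particular $L$ is a Noetherian $\Z$-module.

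Next I would consider the chain $G_l := M + \Z\{\epsilon_0, \dots, \epsilon_l\} \subset L$. By Definition \ref{essellgen} the element $\epsilon_l$ is chosen in $E \setminus G_{l-1}$, so $\epsilon_l \in G_l \setminus G_{l-1}$ and therefore $G_{l-1} \subsetneq G_l$. Thus $(G_l)_l$ is a strictly ascending chain of subgroups of the Noetherian $\Z$-module $L$, and hence must terminate. This proves that the sequence $\Ess(E, M, \preceq)$ is finite.

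The main subtlety that did not arise in the one-variable case is that ascending chains of subgroups of $\Q^h$ are not in general stationary (for instance, $\frac{1}{n}\Z$ as $n$ grows). The bounded-denominators hypothesis on $E$ is precisely what confines the whole chain to a lattice, restoring Noetherianity — this is the only genuinely new input needed. The additive order $\preceq$ itself plays no direct role in the finiteness argument; its sole function (via the domination property) is to guarantee that the minima appearing in Definition \ref{essellgen} exist at each step, so that the sequence $(\epsilon_l)_l$ is well defined.
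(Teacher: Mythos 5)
Your proof is correct and follows essentially the same route as the paper: confine the strictly ascending chain $M + \Z\{\epsilon_0,\dots,\epsilon_l\}$ inside a lattice of $\Q^h$ (you exhibit $M + \frac{1}{N}\Z^h$ directly, whereas the paper argues the union of the chain is itself a lattice) and invoke the ascending chain condition for subgroups of a free abelian group of finite rank. Your closing remark on the role of $\preceq$ (needed only so the minima exist) is also accurate.
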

  \begin{proof}
  For every integer $l \geq 0$ for which $\Ess(E,M, \preceq)_{l}$ is defined, 
   let us denote by $M_l$ the abelian group 
     $M+ \Z\{\Ess(E,M, \preceq)_0, \dots, \Ess(E,M, \preceq)_{l}  \}$.
  Since $(M_l)_l$ is an increasing sequence of abelian groups, the union 
  $ \bigcup_l M_l$ is also an abelian group. 
  The hypothesis that $E$ has bounded denominators implies that this  
  group $\bigcup_l M_l$ is a lattice of $\Q^h$. 
  Any ascending chain of 
  subgroups of a free abelian group of finite rank being stationary,  
  the sequence $(M_l)_l$ must be finite. 
  Therefore, the sequence $\Ess(E,M, \preceq)$ is also finite.
  \end{proof}

    \medskip
    
     If $q \in \mbox{GL}(h, \Q)$  and if $\preceq$ is a additive order  on $\Q^h$, 
        we denote by $\preceq_{q}$ the additive order defined by:  
        \[
        \alpha \preceq_{q} \beta \Leftrightarrow q(\alpha) \preceq q(\beta).
        \]
       By using this notion,  Lemma \ref{ess-P} extends immediately into: 
  
  \begin{lemma}  \label{ess-Pgen}
        Assume that $E \subset \Q_+^h$ has bounded denominators and 
        that $M$ is a lattice of $\Q^h$. Take $q \in \mbox{GL}(h, \Q)$ 
        such that $q(\Q_+^h) \subset \Q_+^h$ and let $\preceq$ be a additive order  
        dominating  $\Q^h_+$.
        Then:
              $$ q \:  (\Ess(E,M, \preceq_{q})) = \Ess \left( q  (E),  q (M) , \preceq \right).$$
  \end{lemma}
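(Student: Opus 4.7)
My plan is to mimic the one-variable proof of Lemma \ref{ess-P}, the induction structure of which transfers cleanly once one checks that the hypotheses travel correctly under $q$. I will first verify that $\preceq_q$ is itself an additive order dominating $\Q^h_+$, so that the symbol $\Ess(E, M, \preceq_q)$ on the left-hand side is defined according to Definition \ref{essellgen}. Additivity of $\preceq_q$ follows from linearity of $q$: if $\alpha \preceq_q \beta$, then $q(\alpha)\preceq q(\beta)$, so $q(\alpha+\gamma)=q(\alpha)+q(\gamma)\preceq q(\beta)+q(\gamma)=q(\beta+\gamma)$, hence $\alpha+\gamma\preceq_q\beta+\gamma$. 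Totality and reflexivity are likewise pulled back via $q$. For domination: given a non-empty $S \subset \Q^h_+$ with bounded denominators, the image $q(S)$ is still contained in $\Q^h_+$ (by the hypothesis $q(\Q^h_+) \subset \Q^h_+$) and has bounded denominators (since $q$ has rational entries); thus $q(S)$ has a $\preceq$-minimum $q(s_0)$, and the order-reversing-free definition of $\preceq_q$ makes $s_0$ the $\preceq_q$-minimum of $S$.

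The core of the proof is then an induction on the index $l$ showing that $q(\Ess(E,M,\preceq_q)_l) = \Ess(q(E),q(M),\preceq)_l$, including the assertion that the left-hand term is defined if and only if the right-hand term is. For the base case $l=0$, the equality $q(\min_{\preceq_q} E) = \min_{\preceq} q(E)$ is exactly the order-preserving property of $q$ applied to the non-empty subset $E$ of $\Q^h_+$ with bounded denominators. For the inductive step, the key is that $q$ is a group isomorphism of $(\Q^h,+)$, so for any subset $T$ of $\Q^h$,
\[
q\bigl(M + \Z\{\epsilon_0,\dots,\epsilon_{l-1}\}\bigr) = q(M) + \Z\{q(\epsilon_0),\dots,q(\epsilon_{l-1})\},
\]
and, using the inductive hypothesis, the right-hand side coincides with $q(M) + \Z\{\Ess(q(E),q(M),\preceq)_0,\dots,\Ess(q(E),q(M),\preceq)_{l-1}\}$. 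Hence the condition for $\Ess(E,M,\preceq_q)_l$ to exist (non-containment of $E$ in the above group) is transported bijectively to the analogous condition for $\Ess(q(E),q(M),\preceq)_l$. When both exist, the complement set on the left maps bijectively to the complement set on the right, and the argument of the base case applied to these complements yields the equality of the $l$-th essential elements.

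The step I expect to require the most care is the preliminary verification that $\preceq_q$ dominates $\Q^h_+$, because it is the only place where the hypothesis $q(\Q^h_+) \subset \Q^h_+$ is essential; without it one cannot guarantee that subsets of $\Q^h_+$ get sent to subsets of $\Q^h_+$, and hence cannot invoke the domination property of $\preceq$ to extract a minimum for $\preceq_q$. Once this is in hand, the rest of the proof is a transparent induction, and termination of both sequences follows either from Lemma \ref{finessgen} (applied on both sides independently) or, redundantly, from the bijective correspondence of the defining conditions established in the induction.
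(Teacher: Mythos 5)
Your proof is correct and is exactly the argument the paper implicitly leaves to the reader: Lemma \ref{ess-Pgen} is stated in the text with the remark that Lemma \ref{ess-P} ``extends immediately'' via the notion of the transported order $\preceq_q$, and no proof is given. Your careful verification that $\preceq_q$ is an additive order dominating $\Q^h_+$ (where the hypothesis $q(\Q_+^h)\subset\Q_+^h$ is indeed the crucial one — it is also needed simply for the right-hand side $\Ess(q(E),q(M),\preceq)$ to make sense, since $q(E)$ must lie in $\Q_+^h$), followed by the induction on $l$ using that $q$ is a group isomorphism carrying $M+\Z\{\epsilon_0,\dots,\epsilon_{l-1}\}$ onto $q(M)+\Z\{q(\epsilon_0),\dots,q(\epsilon_{l-1})\}$, is precisely the intended unwinding of Definition \ref{essellgen}.
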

  
  Lemmas \ref{essind} and  \ref{eqess1} also extend immediately to our more general context:

   \begin{lemma}  \label{essindgen} 
     The essential elements of a set $E \subset \Q^h_+$ with bounded denominators 
     relative to any lattice $M$  of $\Q^h$ and an additive order $\preceq$ dominating $\Q^h_+$
     are irreducible elements of $E$. 
  \end{lemma}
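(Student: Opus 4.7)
The plan is to mimic almost verbatim the proof of Lemma \ref{essind}, replacing the usual order on $\Q_+$ by the additive order $\preceq$ and the subgroup $p\,\Z$ by the lattice $M$. Write $\Ess(E,M,\preceq) = (\epsilon_0, \epsilon_1, \dots, \epsilon_d)$. The case $l=0$ is handled by Remark \ref{minirr}, since $\epsilon_0 = \min E$ with respect to $\preceq$ is the $\preceq$-minimum of $E$, and any minimum of $E$ is irreducible in $E$ (a non-trivial decomposition in $E$ would have at least one summand strictly smaller than $\epsilon_0$ in $\preceq$, after verifying that nonzero elements of $\Q^h_+$ are $\preceq 0$-positive; see below).

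For $l \geq 1$, I will argue by contradiction. Suppose $\epsilon_l$ is reducible in $E$, so that we can write $\epsilon_l = a_1 + \cdots + a_k$ with $k \geq 2$ and each $a_i \in E \setminus \{0\}$. The first key observation, which requires a brief preliminary lemma, is that every nonzero element of $\Q^h_+$ is strictly $\preceq$-positive. This follows from the dominance hypothesis: if we had $v \prec 0$ for some $v \in \Q^h_+ \setminus \{0\}$, then by additivity $\dots \prec 3v \prec 2v \prec v \prec 0$, so the set $\{nv : n \in \N^*\} \subset \Q^h_+$, which has bounded denominators, would have no $\preceq$-minimum, contradicting dominance.

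From this preliminary observation, each $a_i$ satisfies $a_i \succ 0$, and hence $a_j = \epsilon_l - \sum_{i \neq j} a_i \prec \epsilon_l$ by additivity of $\preceq$ (the sum of finitely many $\preceq$-positive elements is $\preceq$-positive). By the defining minimality property of $\epsilon_l$ in Definition \ref{essellgen}, every element of $E$ strictly $\preceq$-less than $\epsilon_l$ belongs to $M + \Z\{\epsilon_0, \dots, \epsilon_{l-1}\}$; in particular each $a_i$ belongs to this group. But then their sum $\epsilon_l$ also lies in $M + \Z\{\epsilon_0, \dots, \epsilon_{l-1}\}$, contradicting the very definition of $\epsilon_l$ as an element of $E \setminus (M + \Z\{\epsilon_0, \dots, \epsilon_{l-1}\})$.

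The main (and essentially only) new point compared with the one-variable case is the preliminary step establishing that dominance of $\preceq$ over $\Q^h_+$ forces nonzero elements of $\Q^h_+$ to be $\preceq$-positive. Once this is noted, the argument is a direct transcription of the proof of Lemma \ref{essind}; no issue arises from passing from the cyclic subgroup $p\,\Z$ to the general lattice $M$, since the proof only uses the fact that $M + \Z\{\epsilon_0, \dots, \epsilon_{l-1}\}$ is a subgroup of $(\Q^h, +)$ closed under addition.
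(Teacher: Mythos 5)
Your proof is correct and follows the route the authors intend: the paper states Lemma~\ref{essindgen} without proof, saying only that Lemma~\ref{essind} ``extends immediately,'' and your argument is exactly that transcription. The one non-trivial point you correctly isolate and prove -- that dominance of $\preceq$ over $\Q^h_+$ forces every nonzero $v \in \Q^h_+$ to satisfy $v \succ 0$ (else $\{nv : n \in \N^*\}$ would be a bounded-denominator subset of $\Q^h_+$ with no $\preceq$-minimum) -- is precisely the verification the paper leaves implicit, and once it is in place the step ``each summand $a_j$ is $\prec \epsilon_l$, hence lies in $M + \Z\{\epsilon_0,\dots,\epsilon_{l-1}\}$, hence so does $\epsilon_l$'' goes through verbatim as in the one-variable case.
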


\begin{lemma} \label{eqessgen}
Let  $E \subset \Q_+^h$ be a set with bounded denominators, $M$ be a lattice of $\Q^h$ 
and $\preceq$ be an additive order dominating $\Q^h_+$. Then we have the following 
equality of essential sequences:
       \[
        \Ess (E,M, \preceq) = \Ess ( \Irr(E), M, \preceq).
        \]
\end{lemma}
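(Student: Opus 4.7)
The plan is to mimic the proof of Lemma \ref{eqess1} by induction on the index $l$, relying on two ingredients: the inclusion $\Irr(E)\subset E$, and Lemma \ref{essindgen}, which guarantees that the essential elements of $E$ relative to $(M,\preceq)$ are irreducible in $E$. The base case requires an analogue of Remark \ref{minirr}, namely that $\min E$ is irreducible in $E$; this is the only step where the domination hypothesis on $\preceq$ plays a direct role, and I would dispose of it first.

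To obtain the base case, the key subclaim is that $0 \prec \gamma$ for every $\gamma \in \Q_+^h\setminus\{0\}$. I would prove this by contradiction: the set $\{k\gamma : k\in\N\}\subset \Q_+^h$ has bounded denominators, so by domination it admits a $\preceq$-minimum; if one had $\gamma\prec 0$, additivity would yield the strictly decreasing chain $0\succ\gamma\succ 2\gamma\succ\cdots$, which has no minimum. Once this subclaim is available, a putative decomposition $\min E = \beta+\gamma$ with $\beta,\gamma\in E\setminus\{0\}$ would yield $\beta = \beta+0 \prec \beta+\gamma = \min E$ by additivity, contradicting the minimality of $\min E$; hence $\min E$ is irreducible. (If $E$ contains $0$, then $0=\min E$ is irreducible by definition.)

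For the induction step, set $\epsilon_l := \Ess(E,M,\preceq)_l$, $\epsilon_l' := \Ess(\Irr(E),M,\preceq)_l$, and $G_l := M + \Z\{\epsilon_0,\dots,\epsilon_{l-1}\}$, and assume inductively that $\epsilon_i=\epsilon_i'$ for $i<l$. The inclusion $\Irr(E)\setminus G_l \subset E\setminus G_l$ yields $\epsilon_l \preceq \epsilon_l'$, while Lemma \ref{essindgen} places $\epsilon_l$ in $\Irr(E)\setminus G_l$, whence $\epsilon_l'\preceq \epsilon_l$. Thus $\epsilon_l=\epsilon_l'$.

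Finally, for the equality of lengths: if $\Ess(E,M,\preceq)$ terminates at index $d$, meaning $E\subset G_{d+1}$, then $\Irr(E)\subset E\subset G_{d+1}$, so $\Ess(\Irr(E),M,\preceq)$ also terminates at index $d$. Conversely, suppose $\Irr(E)\subset G_{d+1}$; since $G_{d+1}$ is a subgroup of $\Q^h$ and, by Lemma \ref{twoind-S}, $\Irr(E)$ generates the semigroup $\N^*E$, we obtain $E\subset \N^*E \subset G_{d+1}$, so $\Ess(E,M,\preceq)$ terminates as well. I expect the main obstacle to be precisely the preliminary fact that $0\prec\gamma$ on $\Q_+^h\setminus\{0\}$: it has no one-variable counterpart and is the sole point at which the domination hypothesis must be activated, whereas the rest of the argument is a direct transcription of the proof of Lemma \ref{eqess1}.
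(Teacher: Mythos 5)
Your proposal is correct, and it takes the route the paper implicitly intends: the paper simply asserts that Lemmas \ref{essind} and \ref{eqess1} ``extend immediately'' to the multi-variable setting, and your argument is precisely that extension, carried out explicitly. The genuine contribution of your write-up is the subclaim that $0 \prec \gamma$ for every $\gamma \in \Q_+^h \setminus \{0\}$, proved by applying the domination hypothesis to the bounded-denominator set $\{k\gamma : k \in \N\}$ and deriving an infinite strictly decreasing $\preceq$-chain from additivity; this is exactly the fact that has no counterpart for $h=1$ (where $\preceq$ is just the usual order on $\Q$) and that silently underlies both the multi-variable analogue of Remark \ref{minirr} and Lemma \ref{essindgen} itself. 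Two small remarks. First, Definition \ref{defindec} allows a reducible element to be a sum of \emph{at least two} elements of $E\setminus\{0\}$, not exactly two, so your decomposition $\min E = \beta + \gamma$ should really be $\min E = \beta_1 + \cdots + \beta_k$ with $k \geq 2$; the argument is unchanged, since $\beta_2 + \cdots + \beta_k \in \Q_+^h \setminus \{0\}$ is still $\succ 0$ and one concludes $\min E \succ \beta_1 \succeq \min E$. Second, your closing paragraph invoking Lemma \ref{twoind-S} to handle the lengths is sound but slightly redundant: once you have the subclaim and Lemma \ref{essindgen}, the induction step already shows that $\epsilon_l$ is defined if and only if $\epsilon_l'$ is (the forward implication because $\epsilon_l \in \Irr(E)\setminus G_l$ by irreducibility, the backward because $\Irr(E)\setminus G_l \subset E \setminus G_l$), which is the simpler termination argument used in the one-variable proof. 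Either way, the proof is correct.
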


  \medskip
  \subsection{On the notions of dual and reciprocal series in several variables}
  \label{dualrecmany}
$\:$ 
\medskip
  
     Consider now the ring $\C[[t_1, t_2, ..., t_h]]$, and its subset:
     $$\C[[t_1, t_2, ..., t_h]]^* \:= \left\{ \phi \in \C[[t_1, t_2, ..., t_h]] \:  : \:  \phi(0, ..., 0) \neq 0  \right\} $$
  consisting of the series with non-zero constant term. It is the group 
  of multiplicatively invertible elements of the ring $\C[[t_1, t_2, ..., t_h]]$. 
  
  If $\phi  \in \C[[t_1, t_2, ..., t_h]]^*$ has constant term $\alpha \neq 0$, then the map: 
    \begin{equation} \label{h-map}
    \begin{array}{rll}
                \C[[t_1, t_2, ..., t_h]]   & \longrightarrow &  \C[[t_1, t_2, ..., t_h]]  \\
               (t_1, t_2, ..., t_h) &  \longrightarrow   &  (t_1 \: \phi(t_1, t_2, ..., t_h), t_2, ..., t_h)
     \end{array} 
     \end{equation}
    is invertible for composition, 
    as its linearization $(t_1, t_2, ..., t_h) \: \to \  (\alpha\:  t_1, t_2, ..., t_h)$ 
    is invertible in $GL(h, \C)$. 
  One has the following generalization of the duality 
  of series in $\C[[t]]^*$, introduced in Definition \ref{dualser}: 

  \begin{definition} \label{dualsergen}
      If $\phi \in \C[[t_1, t_2, ..., t_h]]^*$, then its {\bf dual relative to the first variable}  
      is the unique entire series 
      $\widecheck{\phi} \in \C[[u_1, t_2, ..., t_h,]]^*$ such that the following maps are 
      reciprocal:
      $$(u_1, t_2, ..., t_h) \: \to \:  (u_1\: \widecheck{\phi}(u_1, t_2, ..., t_h), t_2, ..., t_h), 
        \: \: \: \: 
      (t_1, t_2, ..., t_h) \:  \to  \: (t_1 \: \phi(t_1, t_2, ..., t_h), t_2, ..., t_h).$$
       \end{definition}

\begin{remark} Note that the previous definition depends in an essential way on 
   the choice of the first  variable $t_1$, but that it is symmetric in the other variables. 
   If $\phi \in \C[[t_1, t_2, ..., t_h]]^*$,  then setting 
   $u_1= t_1 \: \phi(t_1, t_2, ..., t_h)$ defines a \textit{change of variables} in the ring 
   $\C[[t_1, t_2, ..., t_h]]$. 
   Notice that  $\C[[t_1, t_2, ..., t_h]] = \C[[u_1, t_2, ..., t_h]]$ and by Definition \ref{dualsergen}
   one has the equivalence:
       \begin{equation} \label{f:eqgen}
          u_1= t_1  \: \phi(t_1, t_2, ..., t_h) \Leftrightarrow t_1= u_1 \: \widecheck{\phi} (u_1, t_2, ..., t_h).
       \end{equation}
\end{remark}

The following proposition generalizes Proposition \ref{esspowers} to the case of an 
arbitrary number of variables:

 \begin{proposition} \label{esspowersgen}
          Let $\phi \in \C[[t_1, ..., t_h]]^*$ and $N \in \N^*$. Then: 
            \begin{enumerate}
               \item \label{esspowgen} 
                    $\Irr(\phi^N) = \Irr(\phi)$. Moreover $[\phi^N]_0 = [\phi]_0^N$ and    \,  
                        $[\phi^N]_r = N\:  [\phi]_0^{N-1} [\phi]_r$, for all \,    $r \in \Irr(\phi) \setminus \{0\}$.
               \item \label{essdualgen} 
                      $\Irr(\widecheck{\phi}) = \Irr(\phi)$. Moreover 
                     $[\widecheck{\phi}]_0 = [\phi]_0^{-1}$ \mbox{ and } \,   
                     $[\widecheck{\phi}]_r = -  [\phi]_0^{-{r_1} -2}  [\phi]_r$, for all\,    $r \in \Irr(\phi) \setminus \{0\}$.
            \end{enumerate}
  \end{proposition}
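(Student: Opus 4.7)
My plan is to mirror the one-variable proof of Proposition \ref{esspowers} as closely as possible, replacing the natural well-ordering of $\N$ by an additive order on $\N^h$ (for instance the lexicographic order) whenever a minimality argument is needed. The two parts of the statement remain formally identical to those of Proposition \ref{esspowers}, except that the asymmetric role of the first variable in the definition of the dual shows up in the exponent $-r_1-2$ appearing in (\ref{essdualgen}). I will assume throughout that Lemma \ref{twoind-S} replaces Lemma \ref{twoind}(1), which is all we need for the first half of the proof.

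For part (\ref{esspowgen}), I would write out the analogue of the multinomial expansion (\ref{explicitepow}):
\[
\phi^N(t_1,\dots,t_h) \,=\,\sum_{j^{(1)},\dots,j^{(N)}\in\Supp(\phi)} [\phi]_{j^{(1)}}\cdots [\phi]_{j^{(N)}}\,t^{j^{(1)}+\cdots +j^{(N)}},
\]
and observe that for $r\in\Irr(\phi)\setminus\{0\}$ the only tuples summing to $r$ have exactly one entry equal to $r$ and the rest equal to $0$, which yields $[\phi^N]_r=N[\phi]_0^{N-1}[\phi]_r$ and in particular $r\in\Supp(\phi^N)$. Irreducibility of $r$ in $\Supp(\phi^N)$, and the reverse inclusion $\Irr(\phi^N)\subset\Irr(\phi)$, follow verbatim from the one-variable argument since nothing there used that exponents lie in $\N$ rather than $\N^h$.

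For part (\ref{essdualgen}), the key identity is extracted from the defining equivalence (\ref{f:eqgen}): substituting $u_1=t_1\phi(t_1,\dots,t_h)$ into $t_1=u_1\widecheck{\phi}(u_1,t_2,\dots,t_h)$ and dividing by $t_1$ yields
\[
1 \,=\, \phi(t_1,\dots,t_h)\cdot\widecheck{\phi}\bigl(t_1\phi(t_1,\dots,t_h),t_2,\dots,t_h\bigr).
\]
Expanding $\widecheck{\phi}$ as a power series in its first variable and using part (\ref{esspowgen}) to expand each $\phi^{k_1+1}$ produces, for every $p\in\N^h\setminus\{0\}$, the vanishing identity
\[
\sum_{\substack{k\in\Supp(\widecheck{\phi}),\ j^{(1)},\dots,j^{(k_1+1)}\in\Supp(\phi)\\ k+j^{(1)}+\cdots+j^{(k_1+1)}=p}}
[\widecheck{\phi}]_k\,[\phi]_{j^{(1)}}\cdots [\phi]_{j^{(k_1+1)}}\,=\,0,
\]
which is the multivariate analogue of (\ref{vanish}). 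The crucial new feature is that the power of $\phi$ depends only on the first coordinate $k_1$ of $k$, which is exactly what will force the exponent $r_1+1$ (rather than $r+1$) in the coefficient formula.

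To deduce $\Irr(\widecheck{\phi})=\Irr(\phi)$, I would show $\N^*\Supp(\widecheck{\phi})=\N^*\Supp(\phi)$ by the same contradiction argument as in the one-variable proof: fix an additive well-ordering of $\N^h$ (lexicographic order works, since it dominates $\N^h_+$), pick a minimal element $r$ of the symmetric difference, and apply the vanishing identity at $p=r$; every tuple contributing has $k\leq r$ componentwise, so either $k=r$, forcing all $j^{(i)}=0$ and producing the isolated term $[\widecheck{\phi}]_r[\phi]_0^{r_1+1}\neq 0$, or $k<r$, which by minimality lies in the other semigroup and yields a contradiction. Lemma \ref{twoind-S} then gives the equality of irreducible sets. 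For the coefficient identity at $r\in\Irr(\phi)$, the same vanishing sum with $p=r$ reduces, by irreducibility of $r$, to exactly two contributions—from $k=0$ (yielding $[\widecheck{\phi}]_0[\phi]_r$) and from $k=r$ (yielding $[\widecheck{\phi}]_r[\phi]_0^{r_1+1}$)—so that combined with $[\widecheck{\phi}]_0=[\phi]_0^{-1}$ one gets the announced formula $[\widecheck{\phi}]_r=-[\phi]_0^{-r_1-2}[\phi]_r$. The main obstacle, and the only genuinely new bookkeeping compared to the one-variable proof, is keeping track of the fact that only the first coordinate of the multi-index $k$ governs the power of $\phi$ in the expansion; everything else is a transcription of the arguments used for Proposition \ref{esspowers}.
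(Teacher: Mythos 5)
Your proof is correct and follows essentially the same approach as the paper: both transcribe the one-variable argument for Proposition \ref{esspowers}, tracking that the dual's defining identity produces the power $\phi^{k_1+1}$ governed only by the first coordinate of the multi-index $k$ (hence the exponent $-r_1-2$), and both replace the usual order on $\N$ by an additive order dominating $\Q^h_+$ when choosing a minimal counterexample in the semigroup argument. One small slip: when expanding $\phi^{k_1+1}$ you should cite the multinomial expansion (\ref{explicitepowgen}) rather than part (\ref{esspowgen}) of the proposition, which concerns irreducible exponents rather than the expansion itself.
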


  \begin{proof}
      In what follows, if $k=(k_1, ..., k_h) \in \N^h$, we will write simply:
         $$t^k := t_1^{k_1} \cdots t_h^{k_h}.$$
     One has the following analogue of equation (\ref{explicite}):
         \begin{equation} \label{explicitegen} 
              \phi (t)= \sum_{j \in \Supp(\phi)} [\phi]_j \:  t^j. 
         \end{equation}
      The hypothesis $\phi \in \C[[t_1, ..., t_h]]^*$ translates into $0 \in \Supp(\phi)$, 
      that is, $[\phi]_0 \neq 0$. 
       
       \medskip

       \noindent
           (\ref{esspowgen}) \emph{Consider first the case of $\phi^N$}.  
           By equation (\ref{explicitegen}), 
          we get the exact analogue of the expansion (\ref{explicitepow}):
             \begin{equation} \label{explicitepowgen} 
                   \phi^N (t)= \sum_{j_1, ..., j_N \in \Supp(\phi)} [\phi]_{j_1} \cdots  [\phi]_{j_N} 
                       t^{j_1 + \cdots + j_N}.   
             \end{equation}   
          Then the proof is identical to that of the one-variable case.

                 \medskip
                 \noindent
                 (\ref{essdualgen}) 
           \emph{Consider now the case of $\widecheck{\phi}$}. 
           Write the analogue 
          of the expansion (\ref{explicitegen}) for the series $\widecheck{\phi}$:
              \begin{equation} \label{explicitedualgen} 
                    \widecheck{\phi}(u) = \sum_{k \in \Supp(\widecheck{\phi})} [\widecheck{\phi}]_k \:  u^k,
              \end{equation}
  \noindent where:  
      $$u^k :=u_1^{k_1}t_2^{k_2}\cdots  t_h^{k_h} \mbox{ for all } k=(k_1,\ldots,k_h)\in \N^h.$$        
         By Definition   \ref{dualsergen},  if  $\widecheck{\phi}(u_1, t_2, ..., t_h)$ is 
         the dual with respect to $t_1$ of the series $\phi(t_1, t_2, ..., t_h)$, 
         then one has the identity:       
             \[t_1 =  (t_1 \: \phi(t_1, t_2, ..., t_h) ) \: \cdot \: 
               \widecheck{\phi}( t_1\:  \phi(t_1, ..., t_h), t_2, ..., t_h ) \]
        which, after division by $t_1$ and combination with the expansion 
        (\ref{explicitedualgen}), gives: 
          \[
                 1 =  \sum_{k \in \Supp(\widecheck{\phi})} [\widecheck{\phi}]_k \:  t^k \phi(t)^{k_1+1}.   
          \]
         Expand now the powers $ \phi(t)^{k_1 +1}$ using equation (\ref{explicitepowgen}). We get:
           \[ 1 = \sum_{\small{\begin{array}{c}
                                      k \in \Supp(\widecheck{\phi})\\
                                      j_{1}, ..., j_{k_1 +1} \in \Supp(\phi)
                                  \end{array}} }    
                          [\widecheck{\phi}]_k  \:  [\phi]_{j_1} \cdots [\phi]_{j_{k_1+1}} \: t^{k + j_1 + 
                              \cdots + j_{k_1+1}}.   \]
          Therefore:
           \begin{equation}  \label{vanish-S} 
                \sum_{\small{\begin{array}{c}
                                      k \in \Supp(\widecheck{\phi})\\
                                      j_{1}, ..., j_{k_1 +1} \in \Supp(\phi) \\
                                      k + j_{1} + \cdots + j_{k_1+1} = p
                                  \end{array} }}    
                          [\widecheck{\phi}]_k  \:  [\phi]_{j_{1}} \cdots [\phi]_{j_{k_1 +1}}    =0 , \:  
                             \hbox{\rm for all } \: p \in \N^h \setminus \{ 0 \}.
           \end{equation}

           $\bullet$ \emph{Let us show first that $\Irr(\phi) =  \Irr(\widecheck{\phi})$}. 
         Using Lemma \ref{twoind-S},  we reason 
         as in the one variable case. We must take into account that for $h \geq 2$, the element 
           $r \in  \N^* ( \Supp(\widecheck{\phi})) \: \setminus \:  \N^* (\Supp(\phi))$ 
           chosen to be minimal with this property (for the componentwise partial order)
           is not necessarily unique. Let us choose
           $r$ to be the smallest element with this property, relative to 
           an additive order $\preceq$ dominating $\Q^h_+$.
           Then the proof of the assertion follows  exactly  by the same argument as for $h=1$.

            \medskip
           $\bullet$ \emph{We prove the identities relating the coefficients 
              associated to the irreducible exponents of $\phi$ and $\widecheck{\phi}$. }
              If $r \in \Irr(\phi) =  \Irr(\widecheck{\phi})$ and if: 
              $k + j_1 + \cdots + j_{k_1 +1} = r$  for 
              $k \in \Supp(\widecheck{\phi})$ and $j_1, \dots  j_{k_1 +1} \in \Supp(\phi)$, 
              then we obtain, by the same argument as in the one variable case, that 
              $k = r$ or $k =0$.
            We deduce the 
              following analogues of equation (\ref{eqcoefsym}): 
               \begin{equation} \label{eqcoefsymgen} 
                  [\widecheck{\phi}]_0 \:  [\phi]_r +  [\widecheck{\phi}]_r \:   [\phi]_0^{r_1+1} =0 
               \end{equation}
                from which one gets the stated equality between coefficients of terms 
                with irreducible exponents.   
  \end{proof}

 Combining Proposition \ref{esspowersgen} with Lemma  \ref{eqessgen}, we obtain 
 the  following extension of Corollary \ref{coress}:

  \begin{corollary} \label{coressgen}
      Let $\phi \in \C[[t_1, t_2, \dots, t_h]]^*$, $N\in \N^*$ and $M$ be any lattice of 
      $\Q^h$. 
       Then the sequences of essential exponents 
       of $\phi, \phi^N$ and $\widecheck{\phi}$ 
       relative to $M$ coincide. 
  \end{corollary}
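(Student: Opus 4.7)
The plan is to replicate verbatim the argument used for the one-variable analogue (Corollary \ref{coress}), since all the heavy lifting has already been carried out in the preparatory results. Concretely, I would extend the notational convention of Definition \ref{essell-2} by writing $\Ess(\phi, M, \preceq) := \Ess(\Supp(\phi), M, \preceq)$ for a series $\phi \in \C[[t_1, \dots, t_h]]^*$ (the statement of the corollary being understood for every additive order $\preceq$ on $\Q^h$ dominating $\Q_+^h$), and then chain together the two ingredients which are already available.

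The first step is to invoke Proposition \ref{esspowersgen}, which asserts the equalities
\[
\Irr(\phi) \; = \; \Irr(\phi^N) \; = \; \Irr(\widecheck{\phi}).
\]
The second step is to apply Lemma \ref{eqessgen}, which tells us that the sequence of essential exponents of a set $E$ with bounded denominators relative to $M$ and $\preceq$ depends only on $\Irr(E)$, i.e.
\[
\Ess(E, M, \preceq) \; = \; \Ess(\Irr(E), M, \preceq).
\]
Combining the two, I obtain the chain
\[
\Ess(\phi, M, \preceq) = \Ess(\Irr(\phi), M, \preceq) = \Ess(\Irr(\phi^N), M, \preceq) = \Ess(\phi^N, M, \preceq),
\]
and analogously for $\widecheck{\phi}$, which is precisely the desired conclusion.

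There is essentially no obstacle here: the entire novelty of the multivariable case was concentrated in Proposition \ref{esspowersgen} (where one had to choose a minimal counter-example relative to some additive order $\preceq$ dominating $\Q_+^h$, rather than using the natural well-order on $\Q_+$) and in Lemma \ref{eqessgen}. Once those are granted, the corollary is a two-line formal consequence, and the role of the additive order $\preceq$ never enters visibly because it is carried uniformly through both lemmas. The only minor point I would emphasize in writing it up is that the statement is valid for every admissible $\preceq$, and in particular the three sequences coincide as sequences (not merely as sets), because both ingredients produce identical inductive constructions once the sets of irreducible exponents are known to be equal.
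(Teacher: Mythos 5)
Your proof is correct and follows exactly the paper's own reasoning: the paper introduces this corollary with the phrase ``Combining Proposition \ref{esspowersgen} with Lemma \ref{eqessgen}, we obtain...'' and gives no further argument, which is precisely the two-step chain you spell out. Your remark that the statement implicitly quantifies over all admissible additive orders $\preceq$, and that equality holds as sequences and not merely as sets, is a correct and useful clarification.
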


\medskip
\subsection{Newton-Puiseux series in several variables}
\label{NPseriesmany}
$\:$ 
\medskip

  We will consider the following analogue of the ring of Newton-Puiseux series in one variable: 
     $$  \C [[x_1^{1/ \N}, x_2^{1/ \N},..., x_h^{1/ \N} ]] := \bigcup_{n_i \in \N^*,\; 1\leq i\leq h} 
                 \C [[x_1^{1/{n_1}}, x_2^{1/ {n_2}},..., x_h^{1/ {n_h}} ]].$$
  We  say that its elements are \emph{Newton-Puiseux series in the variables 
  $x_1, ..., x_h$}.  
  The \emph{support} $S(\eta)$ of such a series $\eta$ is a subset with 
  bounded denominators of $\Q_+^h$.

\begin{definition}  \label{NPsolgen}
    Assume that $f(x_1 , y_1, x_2, \dots,  x_h) \in \C[[x_1, y_1, x_2,\dots, x_h]]$ has vanishing 
    constant term. A {\bf Newton-Puiseux series of $f$ relative to $(x_1, x_2, \dots , x_h)$} 
    is a series: 
      \[ 
      \psi \in \C[[ x_1^{1/ \N}, x_2^{1/ \N}, \dots , x_h^{1/ \N} ]]
      \]
      such that 
    $f(x_1,  \psi, x_2, \dots,  x_h) =0$. The series $\psi$ is called {\bf $x_1$-dominating} if 
    it is of the form: 
      \[a \cdot x_1^{\lambda} (1 +  \mbox{higher order terms}  ),
      \]
   where $\lambda \in \Q_+^*$, and $a \in \C^*$. 
 A representation of $\psi \in \C[[ x_1^{1/ \N}, x_2^{1/ \N},\dots, x_h^{1/ \N} ]] $ of the form:
      \begin{equation} \label{h-eta}
         \psi = \eta (x_1^{1/n_1}, x_2^{1/n_2} , \dots, x_h^{1/n_h}) \mbox{ with }   
          \eta \in \C[[ t_1, t_2, \dots , t_h]]   
      \end{equation}
   is  called \textbf{primitive} if 
 it is primitive in each variable separately in the sense of Definition \ref{def: prim}. 
 \end{definition}

\begin{example} \label{exdom}
  The Newton-Puiseux series 
             $x_1^{3/2}  + x_1^{7/4} \:  x_2^{1/2} - 2 x_1^2 \: x_3^{1/3} $
   is $x_1$-dominating. But the series:
       $ x_1^{1/2}  + x_2^{1/2}$ and $x_2^{1/3}  + x_1 \: x_2^{2/3} $
   are not $x_1$-dominating. Instead, the second one is $x_2$-dominating. 
\end{example}

If  $ \psi \in \C[[ x_1^{1/ \N}, \dots , x_h^{1/ \N} ]]$, then there is a series  
 $f \in  \C[[x_1, y_1, x_2, \dots, x_h]]$ such that 
  $f(x_1,\psi,  x_2, \dots, x_h) =0$. 
 Indeed, one can get such an $f$ in the ring $\C[[x_1,  \dots, x_h]][y_1]$  
 (cf. Remark \ref{Galact}  in the $1$-variable case):

\begin{remark}
 Recall that for any $n \in \N^*$, we denote by $G_n$ the subgroup of $(\C^*, \cdot)$ 
consisting of the $n$-roots of unity.
Consider $n_1, \dots, n_h \in \N^*$. Let $\C ((x_1^{1/n_1}, \dots, x_h^{1/n_h}))$ 
be the fraction field of 
$\C[[x_1^{1/n_1}, \dots, x_h^{1/n_h} ]]$.  The field extension 
$\C((x_1, \dots, x_h)) \subset \C ((x_1^{1/n_1}, \dots, x_h^{1/n_h}))$ is finite and Galois. 
Its Galois group is isomorphic to $G_{n_1} \times \cdots \times G_{n_h}$, acting 
on $\C ((x_1^{1/n_1}, \dots, x_h^{1/n_h}))$
by:  
\[
     \left( (\rho_1, \dots, \rho_h) , x_1^{a_1/n_1} \cdots x_h^{a_h/n_h} \right) \to \rho_1^{a_1} 
      \cdots \rho_h^{a_h} \cdot  x_1^{a_1/n_1} \cdots x_h^{a_h/n_h} .
\]
If $\psi \in \C[[x_1^{1/n_1}, \dots, x_h^{1/n_h} ]]$ is a Newton-Puiseux series,  then 
the field extension 
$
\C ((x_1, \dots, x_h)) \subset \C((x_1, \dots, x_h))[\psi] 
$
is finite and  its Galois group $G$ is isomorphic to the quotient of 
$G_{n_1} \times \cdots \times G_{n_h}$  
by its subgroup formed by those elements which leave $\psi$ fixed.  If $n=|G|$ and 
$ \psi_1= \psi, \psi_2, \dots, \psi_n $ are the different conjugates of $\psi$  
under the action of the group $G$,  then the polynomial: 
\[
f = \prod_{j=1}^n (y- \psi_j) \in  \C[[x_1^{1/n_1}, \dots, x_h^{1/n_h} ]][y]
\]
is invariant under the action of $G_{n_1} \times \cdots \times G_{n_h}$ on its coefficients.
It follows that $f$ must belong to $\C[[x_1, \dots, x_h ]][y]$ and that $\psi$ is a Newton-Puiseux series relative to $f$. 
\end{remark}

\begin{remark} 
Let $f \in \C[[x_1,  \dots, x_h]][y_1]$ be an irreducible polynomial such that its discriminant 
$\Delta_{y_1} f $ is the product of  a monomial and of
a unit in the ring  $\C[[x_1, \dots, x_h]]$. Then, by the Jung-Abhyankar theorem, all the roots of $f$ 
are Newton-Puiseux series in the variables $x_1, \dots, x_h$ (see \cite{A 55}).
Let us mention that the roots obtained in this way have special properties, for instance, 
the Newton-Puiseux series 
$x_1^{3/2} + x_2^{5/2}$ cannot be a root of  the polynomial $f$ (see Lemma \ref{l:Lipman}).
Notice also that if the discriminant of $f$ is not of this form, the roots may not be 
expressible as Newton-Puiseux series in the variables $x_1, \dots, x_h$. 
An example of this last phenomenon is the polynomial $f = x_1^3+ x_2^3 + y_1^2$.
 \end{remark}

   \medskip
   \subsection{The generalized Halphen-Stolz inversion theorem}
   \label{genhalphmany}
   $\:$ 
   \medskip

  In this subsection we assume that  $\psi$  is a  $x_1$-dominating Newton-Puiseux power series
with primitive representation (see Definition \ref{NPsolgen}):
\[
 \psi = \eta (x_1^{1/n_1}, x_2^{1/n_2} , \cdots, x_h^{1/n_h}).
\]
Then the dominating 
term $a \cdot x_1^{\lambda}$ of $\psi$ satisfies:
  $$\lambda = \dfrac{m_1}{n_1},$$
with $m_1 \in \N^*$. The series  $\eta$  is therefore of the form:
     $$\eta(t_1, t_2, ..., t_h) =  a \cdot t_1^{m_1} (1 + \mbox{higher order terms}),$$
  with $m_1 >0$ and $a \in \C^*$. 
 Let us choose an $m_1$-th root $\tilde{a}\in \C^*$ of $a$.
 Then, we have a unique $m_1$-th root 
 $t_1 \: \tilde{\eta}(t_1, t_2, ..., t_h) \in \C[[t_1, t_2, ..., t_h]]$ of $\eta$:
    \begin{equation}  \label{powergen}
        \eta(t_1, t_2, ..., t_h) = (t_1 \:  \tilde{\eta}(t_1, t_2, ..., t_h))^{m_1}, 
    \end{equation}
 with constant term $[\tilde{\eta}]_0 = \tilde{a} \:  \ne 0$.

 \begin{example}  \label{ex: varnot-S} 
     Start from the Newton-Puiseux series:
       $\psi=  x_1^{3/2}  + x_1^{7/4} \:  x_2^{1/2} - 2 \: x_1^2 \: x_3^{1/3}$. 
       We get $a =1$, $n_1 =4$, $n_2 =2$, $n_3= 3$ and 
       $\psi = \eta( x_1^{1/4}, x_2^{1/2} , x_3^{1/3})$ where
        ${\eta}(t) = t_1^6 + t_1^7 t_2 - 2 \:  t_1^8 t_3= t_1^6(1 + t_1   t_2 - 2 \:  t_1^2 t_3).$
     This shows that $m_1 = 6$ and if $\tilde{a} = 1$ then:
         $\tilde{\eta}(t) = (1 +  t_1   t_2 - 2 \:  t_1^2 t_3   )^{1/6} := 
              1 + \sum_{k \in \N^*} \binom{1/6}{k}  ( t_1   t_2 - 2 \:  t_1^2 t_3  )^k$.
 \end{example}

 Let us come back to the general case. 
 Denote by $\tilde{\xi}(u_1, t_2, ..., t_h) \in \C[[u_1, t_2, ..., t_h]]^*$ the dual series of 
 $\tilde{\eta}(t_1, t_2, ..., t_h)$ with respect to $t_1$ (see Definition \ref{dualsergen}).  
 Hence, one has the equivalence (see (\ref{f:eqgen})):
    \begin{equation} \label{invsergen}
           u_1 = t_1 \:  \tilde{\eta}(t_1, t_2, ..., t_h) \:  \Leftrightarrow \:   
               t_1 = u_1 \: \tilde{\xi}(u_1, t_2, ..., t_h). 
    \end{equation}

 \medskip

 We know that there exists a series $ f (x_1,y_1,  x_2, \dots, x_h, ) \in \C[[ x_1, y_1, x_2, \dots, x_h]]$ 
 such that:
   \[f(x_1, \eta(x_1^{1/n_1},  x_2^{1/n_2}, ..., x_h^{1/n_h}), x_2, ..., x_h) =0. \]
 Replacing each $x_i$ by $t_i^{n_i}$ and using the equality (\ref{powergen}), we get: 
        \begin{equation}   \label{eqsm1gen}
              f(t_1^{n_1}, (t_1 \tilde{\eta}(t_1, t_2, ..., t_h))^{m_1}, t_2^{n_2}, ..., t_h^{n_h}) =0. 
         \end{equation} 
 By doing the second change of variable of formula (\ref{invsergen}), we deduce that: 
     \begin{equation}  \label{eqsm2gen} 
            f((u_1 \: \tilde{\xi}(u_1, t_2, ..., t_h))^{n_1}, u_1^{m_1}, t_2^{n_2}, ..., t_h^{n_h}) =0. 
     \end{equation}
 Consequently, if one defines: 
    \begin{equation}  \label{powerbisgen}
        \xi(u_1, t_2, ..., t_h)  = (u_1 \:  \tilde{\xi}(u_1, t_2, ..., t_h))^{n_1}  
    \end{equation} 
 (an equation which is analogous to (\ref{powergen})), then one sees that:
      \[   f(\xi(y_1^{1/m_1}, x_2^{1/n_2}, ...,x_h^{1/n_h} ), y_1, x_2, ..., x_h) =0, \]
 that is,  $ {\xi}(y_1^{1/m_1}, x_2^{1/n_2}, ...,x_h^{1/n_h} ) $
  is a Newton-Puiseux series of $f(x_1, y_1, x_2, ..., x_h)$ with respect 
   to the variables $(y_1, x_2, ..., x_h)$ (see Definition \ref{NPsolgen}).

\medskip

 Recall that 
    we denote by 
$(\nu_1, \nu_2, \dots , \nu_h)$ the canonical basis of $\Q^h$. 
The following lemma is a multi-variable analogue of Lemma \ref{divpower}, 
formulated using Definition \ref{monord}:

    \begin{lemma} \label{divpowergen} 
    Let $\preceq$ be an additive order of $\Q^h$ dominating $\Q^h_+$.
        Denote:
           \begin{equation} \label{eqess-S}
                \left \{   \begin{array}{l}
                                    \Ess(\eta, \:   n_1\Z \nu_1 + \Z \nu_2 + \cdots + \Z \nu_h, \:  \preceq ) = 
                                           (m_1 \nu_1 , \epsilon_1,  ..., \epsilon_d), \\
                                    \Ess(\xi, \:  m_1 \Z \nu_1 + \Z \nu_2 + \cdots + \Z \nu_h, \:  \preceq) = 
                                            (n_1 \nu_1, \epsilon'_d, ..., \epsilon'_{d'}). 
                             \end{array}   \right. 
           \end{equation}
        Then: 
           \[   \left \{   \begin{array}{l}
                                    \Ess(\tilde{\eta}, \:  \gcd (n_1,m_1) \Z \nu_1 + \Z \nu_2 + 
                                          \cdots + \Z \nu_h, \:   \preceq) = 
                                           (0, \epsilon_1 - m_1 \nu_1, ..., \epsilon_d - m_1 \nu_1), \\
                                    \Ess(\tilde{\xi}, \:   \gcd (n_1,m_1) \Z \nu_1 + \Z \nu_2 + \cdots + \Z \nu_h, 
                                           \:  \preceq) = 
                                        (0, \epsilon'_1 - n_1 \nu_1, ..., \epsilon'_{d'} - n_1 \nu_1). 
                             \end{array} \right.  \]
    \end{lemma}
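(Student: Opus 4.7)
My plan is to follow the strategy used for the one-variable analogue (Lemma \ref{divpower}) and exploit the symmetry between the roles of $\eta$ and $\xi$ to cut the work in half. By the symmetric roles of $(\tilde{\eta}, m_1, n_1)$ and $(\tilde{\xi}, n_1, m_1)$ and since $\tilde{\xi}$ is also the dual of $\tilde{\eta}$ relative to $t_1$, it suffices to prove the first of the two claimed equalities concerning $\tilde{\eta}$; the second follows by interchanging the roles.

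First, I would reduce to computing the essential sequence of $\tilde{\eta}^{m_1}$ rather than $\tilde{\eta}$. Since $\tilde{\eta} \in \C[[t_1,\ldots,t_h]]^*$, Proposition \ref{esspowersgen}(\ref{esspowgen}) yields $\Irr(\tilde{\eta}) = \Irr(\tilde{\eta}^{m_1})$, and Lemma \ref{eqessgen} then gives
\[
\Ess(\tilde{\eta}, M, \preceq) = \Ess(\Irr(\tilde{\eta}), M, \preceq) = \Ess(\Irr(\tilde{\eta}^{m_1}), M, \preceq) = \Ess(\tilde{\eta}^{m_1}, M, \preceq),
\]
where $M = \gcd(n_1,m_1)\Z\nu_1 + \Z\nu_2 + \cdots + \Z\nu_h$. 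From the relation $\eta = (t_1 \tilde{\eta})^{m_1}$ one gets $\tilde{\eta}^{m_1} = t_1^{-m_1}\eta$, hence $\Supp(\tilde{\eta}^{m_1}) = \Supp(\eta) - m_1\nu_1$.

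Next, I would verify the three defining conditions of Definition \ref{essellgen} for the sequence $(0, \epsilon_1 - m_1\nu_1, \ldots, \epsilon_d - m_1\nu_1)$ to be $\Ess(\Supp(\eta) - m_1\nu_1, M, \preceq)$: namely $0$ is the $\preceq$-minimum of $\Supp(\eta) - m_1\nu_1$ (because $m_1\nu_1$ is the $\preceq$-minimum of $\Supp(\eta)$ by hypothesis \eqref{eqess-S}); each $\epsilon_k - m_1\nu_1$ is the $\preceq$-minimum of the complement in $\Supp(\eta) - m_1\nu_1$ of the group generated by $M$ and the previous essential elements shifted by $-m_1\nu_1$; and $\Supp(\eta) - m_1\nu_1$ is contained in the group generated by $M$ and all $\epsilon_k - m_1\nu_1$. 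All three facts will follow once I establish the key identity
\[
M + \Z\{\epsilon_1 - m_1\nu_1, \ldots, \epsilon_{k-1} - m_1\nu_1\} = \Z\{n_1\nu_1, m_1\nu_1, \nu_2, \ldots, \nu_h, \epsilon_1, \ldots, \epsilon_{k-1}\}
\]
for every $1 \leq k \leq d+1$, since the right-hand side is exactly the group appearing in Definition \ref{essellgen} applied to the data of \eqref{eqess-S} for $\eta$.

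This subgroup identity is the only nontrivial step, and I expect it to be the main obstacle — though a mild one. It will be proved by double inclusion: the inclusion $\subset$ is immediate because $\gcd(n_1,m_1)\Z \subset n_1\Z + m_1\Z$, so $\gcd(n_1,m_1)\nu_1$ and each $\epsilon_i - m_1\nu_1$ lie in the right-hand side. Conversely, $\gcd(n_1,m_1)\Z = n_1\Z + m_1\Z$ shows that $n_1\nu_1 \in M$; since $m_1\nu_1 \in M$ too, each $\epsilon_i = (\epsilon_i - m_1\nu_1) + m_1\nu_1$ lies in the left-hand side. Once this identity is in place, the three bullet points follow mechanically from the corresponding properties of $\Ess(\eta, n_1\Z\nu_1 + \Z\nu_2 + \cdots + \Z\nu_h, \preceq)$ stated in \eqref{eqess-S}, completing the proof of the first equality; the second is then obtained by swapping the roles of the variables $t_1$ and $u_1$, using the involutive nature of the duality from Definition \ref{dualsergen}.
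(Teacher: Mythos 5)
Your proposal is correct and follows essentially the same route as the paper's own proof: reduce to $\tilde{\eta}^{m_1}$ via Proposition \ref{esspowersgen}(\ref{esspowgen}) and Lemma \ref{eqessgen}, note $\Supp(\tilde{\eta}^{m_1}) = \Supp(\eta) - m_1\nu_1$, verify the three defining conditions, and establish the subgroup identity by double inclusion using $\gcd(n_1,m_1)\Z = n_1\Z + m_1\Z$. Your remark that the identity is needed for $1 \leq k \leq d+1$ (to cover the final containment condition with the full list of $\epsilon_i$) is slightly more careful than the paper's phrasing.
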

    
    \begin{proof}
         By symmetry, we may treat only the case of the series $\tilde{\eta}$. 
            As $\tilde{\eta} \in \C[[t_1, t_2, ..., t_h]]^*$, 
           Proposition \ref{esspowersgen} implies that 
     $\Irr( \tilde{\eta}) = \Irr ( \tilde{\eta}^m )$. 
      Combining this with Lemma \ref{eqessgen}, we see that for any lattice $M$ of $\Q^h$ one has: 
   \[
   \Ess ( \tilde{\eta}, M, \preceq) = \Ess ( \Irr( \tilde{\eta}), M, \preceq) =  
    \Ess (  \Irr ( \tilde{\eta}^m), M, \preceq ) =  
   \Ess ( \tilde{\eta}^m, M, \preceq ).
   \]
Thus, it is enough to prove that $(0, \epsilon_1 - m_1 \nu_1, ..., \epsilon_d - m_1 \nu_1)$ is 
       the sequence of essential exponents 
        of $\tilde{\eta}^{m_1}$ relative to $\gcd(n_1,m_1)\Z \nu_1 + \Z \nu_2 + \cdots + \Z \nu_h$ and 
        the chosen additive order. 
          By formula (\ref{powergen}), we get $\tilde{\eta}^{m_1} =  t_1^{-m_1} {\eta}$. Therefore        
         $ \Supp(\tilde{\eta}^{m_1}) = \Supp({\eta}) - m_1 \nu_1$.  
       Using Definition \ref{essellgen}, we see that we are done if we prove that:
           \begin{itemize}
             \item $\min (\Supp({\eta}) - m_1 \nu_1) = 0$. 
             \item For all $k \in \{1, ..., d\}$: 
               \[  \epsilon_k - m_1 \nu_1 = \min \left((\Supp({\eta}) - m_1 \nu_1) \setminus  \left(
                      \gcd (n_1,m_1)\Z \nu_1 + \Z \nu_2 + \cdots + \Z \nu_h + 
                            \Z\{ \, 0,  \epsilon_1 - m_1 \nu_1, ..., \epsilon_{k-1} - m_1 \nu_1 \} \right) \right).   \] 
             \item $\Supp({\eta}) - m_1 \nu_1   \subset 
                   \gcd (n_1,m_1) \Z \nu_1 + \Z \nu_2 + \cdots + \Z \nu_h + 
                         \Z\{\,  0, \epsilon_1 - m_1 \nu_1, ..., \epsilon_d - m_1 \nu_1 \}$,
        \end{itemize}
        where the minimum is taken with respect to the additive order $\preceq$.
        But all these facts are immediate from the definition of the essential exponents 
            $\epsilon_i$, because:
        \[ \begin{array}{c}
             \gcd (n_1,m_1) \Z \nu_1 + \Z \nu_2 + \cdots + \Z \nu_h +   
                    \Z\{ 0,  \epsilon_1 - m_1 \nu_1, ..., \epsilon_{k-1} - m_1 \nu_1 \}  =  \\ 
                  = n_1\Z \nu_1 + \Z \nu_2 + \cdots + \Z \nu_h +   
                    \Z \left\{ m_1 \nu_1,  \epsilon_1 , ..., \epsilon_{k-1}  \right\},
             \end{array}
        \]
         for all $1 \leq k \leq d$, an equality which may be proved immediately by double inclusion.
    \end{proof}

\smallskip

Our extension of Theorem \ref{inversioncoef} 
to the case of an arbitrary number of variables follows then exactly as in the one variable case:

  \begin{theorem}   \label{inversioncoefgen}
       Let $\tilde{\eta}\in \C[[t_1, t_2, ..., t_h]]^* $ and $\tilde{\xi} \in \C[[u_1, t_2, ..., t_h]]^*$ 
       be dual relative to the first coordinate
       and consider $m_1,n_1 \in \N^*$. Let $\tilde{a}$ be the constant term of $\tilde{\eta}$. Denote 
       by $(\nu_1, \nu_2, ..., \nu_h)$ the canonical basis of the free abelian group $\Z^h$. 
       Introduce the $t_1$-dominating and $u_1$-dominating series:
            \[ \left\{ \begin{array}{l} 
                    \eta(t_1, t_2, ..., t_h) = (t_1 \: \tilde{\eta}(t_1, t_2, ..., t_h))^{m_1}, \\
                     \xi(u_1, t_2, ..., t_h) = (u_1\:  \tilde{\xi}(u_1, t_2, ..., t_h))^{n_1}, 
                \end{array} \right. \]
        and their sequences of essential exponents relative to an additive order $\preceq$ dominating 
        $\Q^h_{\geq 0}$:  
         \[  \left \{   \begin{array}{l}
                                    \Ess(\eta, n_1 \Z \nu_1 + \Z \nu_2 + \cdots + \Z \nu_h, \preceq) = 
                                         (m_1 \nu_1 , \epsilon_1, ..., \epsilon_d), \\
                                    \Ess(\xi, m_1 \Z \nu_1 + \Z \nu_2 + \cdots + \Z \nu_h, \preceq) = 
                                          (n_1 \nu_1, \epsilon'_1, ..., \epsilon'_{d'}). 
                             \end{array}   \right.  \]
   Then one has the following inversion formulae for exponents and coefficients, 
   where we denote by $\epsilon_{k,1}$ the first coordinate of $\epsilon_k \in \Q_+^h$ (that is, 
    the coefficient of $\nu_1$ in the expansion 
           $\epsilon_k = \sum_{i=1}^h \epsilon_{k,i} \nu_i$):
   \begin{equation}  \label{eq: inv1-S}
          d' = d, 
    \end{equation}
  \begin{equation}  \label{eq: inv2-S}
   \epsilon_k' + m_1 \nu_1 = \epsilon_k + n_1 \nu_1, \: \hbox{\rm for all } \:  k \in \{1, ..., d \}, 
 \end{equation}
         \begin{equation} \label{eq: inv3-S}
                  [{\xi}]_{n} =  {\tilde{a}}^{-n_1} 
            \quad 
            \mbox{ and }  \quad 
              [{\xi}]_{\epsilon_k'} = - \frac{n_1}{m_1} \:  {\tilde{a}}^{-n_1 - \epsilon_{k, 1}} 
              [{\eta}]_{\epsilon_k}, \: \hbox{\rm for all } \:  k \in \{1, ..., d \}. 
          \end{equation}
      \end{theorem}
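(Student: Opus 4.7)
The plan is to follow the same three-step structure as the proof of Theorem \ref{inversioncoef} in the one-variable case, but now using the several-variable analogues proved in Subsections \ref{irrexpmany} and \ref{dualrecmany}. The flow is essentially captured by the diagram (\ref{reasoning}), where the vertical duality step is handled by Proposition \ref{esspowersgen} (\ref{essdualgen}) and the horizontal ``taking roots'' steps by Proposition \ref{esspowersgen} (\ref{esspowgen}) combined with Lemma \ref{divpowergen}.

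First I would establish the equalities of exponents (\ref{eq: inv1-S}) and (\ref{eq: inv2-S}). Since $\tilde{\eta}$ and $\tilde{\xi}$ are dual with respect to the first variable, Corollary \ref{coressgen} applied to the lattice $M = \gcd(n_1,m_1)\Z\nu_1 + \Z\nu_2 + \cdots + \Z\nu_h$ and the order $\preceq$ gives the equality
\[
\Ess(\tilde{\eta}, M, \preceq) \;=\; \Ess(\tilde{\xi}, M, \preceq).
\]
Lemma \ref{divpowergen}, applied on both sides, translates these essential sequences back to those of $\eta$ and $\xi$ relative to the lattices appearing in (\ref{eqess-S}), shifted respectively by $m_1\nu_1$ and $n_1\nu_1$. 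Comparing term by term yields $d' = d$ and $\epsilon_k - m_1\nu_1 = \epsilon_k' - n_1\nu_1$, which is exactly (\ref{eq: inv2-S}).

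For the coefficient formula (\ref{eq: inv3-S}), I would imitate the one-variable chain (\ref{step1})--(\ref{step5}). Start from $\xi = u_1^{n_1}\tilde{\xi}^{n_1}$, giving $[\xi]_{\epsilon_k'} = [\tilde{\xi}^{n_1}]_{\epsilon_k' - n_1\nu_1}$. Since $\epsilon_k' - n_1\nu_1$ is an irreducible (in fact essential) exponent of $\tilde{\xi}$ by Lemma \ref{essindgen} and Corollary \ref{coressgen}, Proposition \ref{esspowersgen} (\ref{esspowgen}) gives
\[
[\tilde{\xi}^{n_1}]_{\epsilon_k' - n_1\nu_1} \;=\; n_1\,\tilde{a}^{-n_1+1}\,[\tilde{\xi}]_{\epsilon_k' - n_1\nu_1},
\]
and then Proposition \ref{esspowersgen} (\ref{essdualgen}) expresses $[\tilde{\xi}]_{\epsilon_k' - n_1\nu_1}$ in terms of $[\tilde{\eta}]_{\epsilon_k - m_1\nu_1}$. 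Here is the one point requiring care: the dual formula involves the first coordinate of the exponent, so the power of $\tilde{a}$ appearing is $-(\epsilon_{k,1} - m_1) - 2$, which is why $\epsilon_{k,1}$ enters (\ref{eq: inv3-S}). Finally, the analogues of (\ref{step1})--(\ref{step2}) for $\tilde{\eta}$ and $\eta = (t_1\tilde{\eta})^{m_1}$ convert $[\tilde{\eta}]_{\epsilon_k - m_1\nu_1}$ into $\tfrac{1}{m_1}\tilde{a}^{-m_1+1}[\eta]_{\epsilon_k}$, and multiplying the chain gives the stated formula. The boundary case $[\xi]_{n_1\nu_1} = \tilde{a}^{-n_1}$ is just $[\xi]_{n_1\nu_1} = [\tilde{\xi}^{n_1}]_0 = \tilde{a}^{-n_1}$ from Proposition \ref{esspowersgen} (\ref{essdualgen}).

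The main obstacle, and the only place where the argument genuinely differs from the one-variable case, is bookkeeping: one has to keep track of the fact that the duality is \emph{with respect to the first variable}, so Proposition \ref{esspowersgen} (\ref{essdualgen}) introduces a power of $\tilde{a}$ governed only by the first coordinate of the multi-index, and one has to check that Lemma \ref{divpowergen} correctly identifies the lattice and that the chosen additive order $\preceq$ plays no role in the final coefficient formula (it only guarantees that minima exist in Definition \ref{essellgen}). Once this is laid out carefully, the deduction is formally identical to the proof of Theorem \ref{inversioncoef}.
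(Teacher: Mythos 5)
Your proposal is correct and follows essentially the same approach as the paper, which itself gives no separate proof of Theorem \ref{inversioncoefgen} beyond the remark that it ``follows then exactly as in the one variable case'' once Proposition \ref{esspowersgen}, Corollary \ref{coressgen} and Lemma \ref{divpowergen} are in place. You correctly flesh out that one-sentence argument, and you correctly pinpoint the single substantive difference from the one-variable chain (\ref{step1})--(\ref{step5}): the exponent of $\tilde{a}$ coming from Proposition \ref{esspowersgen}~(\ref{essdualgen}) now depends only on the first coordinate $\epsilon_{k,1}$, which is exactly why $\epsilon_{k,1}$ and not $\epsilon_k$ appears in (\ref{eq: inv3-S}).
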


As a consequence of this theorem, we get the following generalization 
of the Halphen-Stolz inversion theorem (Corollary \ref{invPuiseux}):

\begin{corollary}[{\bf The generalized Halphen-Stolz inversion theorem}]  \label{invPuiseuxgen}
        $\,$
        
        \noindent
          Let  $\eta (x_1^{1/n_1}, x_2^{1/n_2}, ..., x_h^{1/n_h}) $ and 
          $\xi(y_1^{1/m_1}, x_2^{1/n_2}, ..., x_h^{1/n_h}) $ be Newton-Puiseux series of 
          $f(x_1, y_1, x_2, ..., x_h)$ 
            relative to $(x_1, x_2, ..., x_h)$ and $(y_1, x_2, ..., x_h)$ 
          respectively. 
          As before, we assume that 
              $\eta(t_1, t_2, ..., t_h)  = (t_1 \:  \tilde{\eta}(t_1, t_2, ..., t_h))^{m_1}$ and   
              $\xi(u_1, t_2, ..., t_h)  = (u_1 \:  \tilde{\xi}(u_1, t_2, ..., t_h))^{n_1}$,   
              where $\tilde{\eta}(t_1, t_2, ..., t_h)$ and $\tilde{\xi}(u_1, t_2, ..., t_h)$ are dual 
              relative to the first coordinate   and $[\tilde{\eta}]_0 = \tilde{a}$. 
          Let $\preceq$ be a fixed additive order dominating $\Q^h_+$. Denote:
                \[  \left \{   \begin{array}{l}
                                    \Ess(\eta (x_1^{1/n_1}, x_2^{1/n_2}, ..., x_h^{1/n_h}), \Z^h, \preceq) = 
                                          \left(\dfrac{m_1}{n_1} \nu_1, e_1, \dots, e_d \right), \\
                                    \Ess(\xi( y_1^{1/m_1}, x_2^{1/n_2}, ..., x_h^{1/n_h}), \Z^h, \preceq) = 
                                              \left(\dfrac{n_1}{m_1} \nu_1, e_1', \dots, e'_{d'} \right). 
                             \end{array}   \right.  \]            
          Then one has the following inversion formulae for exponents and coefficients, 
           where we denote by $e_{k,1}$ the first coordinate of $e_k \in \Q_+^h$ (that is, 
    the coefficient of $\nu_1$ in the expansion $e_k = \sum_{i=1}^h e_{k,i} \nu_i$): 
               \begin{equation} \label{eq: invP0-S}
                 \displaystyle{ d'=d  }. 
           \end{equation} 
           \begin{equation} \label{eq: invP1-S}
                 \displaystyle{ m_1 (\nu_1 + e_k') = n_1 (\nu_1 + e_k)  } \,  \mbox{  for  all } \,  k \in \{1, ..., d\}. 
           \end{equation} 
           \begin{equation} \label{eq: invP2-S}
                 [\xi (y_1^{1/m_1},\ldots) ]_{n_1/m_1} = \tilde{a}^{-n_1} 	
                 \,                   
                 \mbox{ and }
                \,   \,  
                 [\xi (y_1^{1/m_1},\ldots) ]_{e_k'} = - \dfrac{n_1}{m_1} \:  
                      \tilde{a}^{-( 1  + e_{k, 1})n_1} 
                  [\eta (x_1^{1/n_1},\ldots)]_{e_k} \,  \mbox{  for  all } \,  k \in \{1, \ldots, d\}. 
             \end{equation} 
          
    \end{corollary}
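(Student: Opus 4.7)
The plan is to derive Corollary \ref{invPuiseuxgen} from Theorem \ref{inversioncoefgen} via the rescaling Lemma \ref{ess-Pgen}, following the template that deduces Corollary \ref{invPuiseux} from Theorem \ref{inversioncoef} in the one-variable case.

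First I would apply Theorem \ref{inversioncoefgen} to the pair of entire series $(\eta(t_1,\ldots,t_h), \xi(u_1,t_2,\ldots,t_h))$ built from the dual pair $(\tilde\eta,\tilde\xi)$ and the integers $(m_1,n_1)$. This produces the essential sequences $(m_1\nu_1,\epsilon_1,\ldots,\epsilon_d)$ and $(n_1\nu_1,\epsilon'_1,\ldots,\epsilon'_d)$ together with the inversion relations for exponents and coefficients. Next I would invoke Lemma \ref{ess-Pgen} with the diagonal maps $q = \mathrm{diag}(1/n_1,1/n_2,\ldots,1/n_h)$ (for $\eta$) and $q' = \mathrm{diag}(1/m_1,1/n_2,\ldots,1/n_h)$ (for $\xi$). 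Both maps lie in $\mathrm{GL}(h,\Q)$ and preserve $\Q_+^h$, and they send the supports of the entire series to those of the Newton-Puiseux series, so they translate the theorem's essential sequences into $e_k = q(\epsilon_k)$ and $e'_k = q'(\epsilon'_k)$. Under these substitutions, the equality (\ref{eq: inv1-S}) yields (\ref{eq: invP0-S}) directly, the vector equation (\ref{eq: inv2-S}) rescales to (\ref{eq: invP1-S}), and the coefficient formula (\ref{eq: inv3-S}) passes to (\ref{eq: invP2-S}) because the substitutions $t_i = x_i^{1/n_i}$ and $u_1 = y_1^{1/m_1}$ preserve the coefficient of each corresponding monomial and satisfy $\epsilon_{k,1} = n_1 e_{k,1}$.

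The main obstacle is identifying the target lattice of the corollary with the image under the rescaling of the theorem's lattice. The maps $q,q'$ send the theorem's lattice $n_1\Z\nu_1 + \Z\nu_2 + \cdots + \Z\nu_h$ to $\Z\nu_1 + (1/n_2)\Z\nu_2 + \cdots + (1/n_h)\Z\nu_h$, which strictly contains $\Z^h$ whenever some $n_i > 1$ for $i\geq 2$. To justify the corollary's use of $\Z^h$ in place of this image lattice, I would need to check that the Newton-Puiseux essential sequences relative to $\Z^h$ coincide with the translated sequences, i.e., that the extra fractional denominators in the $x_i$-coordinates ($i \geq 2$) appearing in the supports of $\eta(x_1^{1/n_1},\ldots,x_h^{1/n_h})$ and $\xi(y_1^{1/m_1},x_2^{1/n_2},\ldots,x_h^{1/n_h})$ do not produce additional essential exponents. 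I expect this verification to rely on the coupled divisibility structure that the common origin in the irreducible series $f(x_1,y_1,x_2,\ldots,x_h)$ imposes on both supports, and it is the most delicate step of the argument.
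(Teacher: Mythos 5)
Your plan follows the right template (apply Theorem \ref{inversioncoefgen}, then rescale via Lemma \ref{ess-Pgen}), and you have correctly spotted the genuine difficulty: the rescaling $q=\mathrm{diag}(1/n_1,\dots,1/n_h)$ sends the theorem's lattice $n_1\Z\nu_1+\Z\nu_2+\cdots+\Z\nu_h$ to $\Z\nu_1+\tfrac{1}{n_2}\Z\nu_2+\cdots+\tfrac{1}{n_h}\Z\nu_h$, not to $\Z^h$. But the fix you propose --- showing that the essential sequences of $\Supp(\eta(x_1^{1/n_1},\dots))$ relative to $\Z^h$ and relative to the larger image lattice coincide --- is false in general, and no amount of ``coupled divisibility structure'' coming from $f$ will rescue it. Take for instance $\psi=x_1^{3/2}+x_1^{7/4}x_2^{1/2}-2\,x_1^2 x_3^{1/3}$, so that $(n_1,n_2,n_3)=(4,2,3)$, $m_1=6$ and $\Supp(\eta)=\{(6,0,0),(7,1,0),(8,0,1)\}$. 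Relative to $\Z^3$ the essential sequence of $\psi$ is $\bigl((3/2,0,0),(7/4,1/2,0),(2,0,1/3)\bigr)$, of length $3$; relative to $\Z\nu_1+\tfrac12\Z\nu_2+\tfrac13\Z\nu_3$ it is only $\bigl((3/2,0,0),(7/4,1/2,0)\bigr)$, of length $2$ (the exponent $(2,0,1/3)$ lies in the larger lattice once $(3/2,0,0)$ and $(7/4,1/2,0)$ are adjoined). Correspondingly, $\Ess(\eta,\,4\Z\nu_1+\Z\nu_2+\Z\nu_3,\preceq)=\bigl((6,0,0),(7,1,0)\bigr)$ has $d=1$, whereas the corollary needs $d=2$. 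So the mismatch is not cosmetic: the theorem, \emph{as stated}, does not transport under $q$ to what the corollary asserts.

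The correct repair is different: invoke (or restate) Theorem \ref{inversioncoefgen} with the \emph{finer} lattices $n_1\Z\nu_1+n_2\Z\nu_2+\cdots+n_h\Z\nu_h$ for $\eta$ and $m_1\Z\nu_1+n_2\Z\nu_2+\cdots+n_h\Z\nu_h$ for $\xi$. These are exactly $q^{-1}(\Z^h)$ and $(q')^{-1}(\Z^h)$, so Lemma \ref{ess-Pgen} then delivers the corollary's sequences on the nose, with identical coefficient and exponent bookkeeping (your coordinate checks for \eqref{eq: invP1-S} and \eqref{eq: invP2-S} go through unchanged once the sequences match). This stronger form of the theorem holds by the very same argument: Corollary \ref{coressgen} is already stated for an arbitrary lattice $M$, and the only lattice-sensitive step, the equality at the end of the proof of Lemma \ref{divpowergen}, carries over verbatim when $\Z\nu_i$ is replaced by $n_i\Z\nu_i$ for $i\geq 2$, because the adjustment $\epsilon_k\mapsto\epsilon_k-m_1\nu_1$ touches only the first coordinate. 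In the example above, relative to $4\Z\nu_1+2\Z\nu_2+3\Z\nu_3$ one indeed gets $\Ess(\eta)=\bigl((6,0,0),(7,1,0),(8,0,1)\bigr)$, which $q$ maps exactly onto $\Ess(\psi,\Z^3,\preceq)$. You should make this adjustment of the theorem explicit rather than trying to prove the two $\Z^h$-sequences agree.
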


\medskip
     
      In the case in which $\tilde{a} = 1$, the inversion formula for 
    the coefficients stated in  Corollary \ref{invPuiseuxgen} may be written 
    in a more symmetric way, easier to remember, and analogous to Corollary 
    \ref{corone}:
      
    \begin{corollary}  \label{coronegen}
        Assume moreover that the constant coefficient $\tilde{a}$ of $\tilde{\eta}$ is equal to $1$.
        Then: 
                 \[  
                          [\xi  ]_{n_1/m_1} = 1 = 
                              [\eta ]_{m_1/n_1},  \quad \mbox{ and }  \quad  
                 m_1 [\xi  ]_{\epsilon_k'} + 
                     n_1  [\eta ]_{\epsilon_k} =0  
                        \mbox{ for all } k \in \{1, ..., d\}. 
                      \]
    \end{corollary}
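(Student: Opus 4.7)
The plan is to derive Corollary \ref{coronegen} as a direct specialization of Corollary \ref{invPuiseuxgen} under the simplifying hypothesis $\tilde{a} = 1$. The entire content of the statement is obtained by observing that all powers of $\tilde{a}$ that appear in the inversion formulae (\ref{eq: invP2-S}) collapse to $1$.

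First I would treat the two normalizing equalities $[\xi]_{n_1/m_1} = 1 = [\eta]_{m_1/n_1}$. The left equality is immediate from the first part of (\ref{eq: invP2-S}), since $\tilde{a}^{-n_1} = 1^{-n_1} = 1$. For the right equality, I would argue either by symmetry, applying Corollary \ref{invPuiseuxgen} after exchanging the roles of the dual pair $(\tilde{\eta}, \tilde{\xi})$ and invoking Proposition \ref{esspowersgen} (\ref{essdualgen}) to note that $[\tilde{\xi}]_0 = [\tilde{\eta}]_0^{-1} = 1$; or alternatively, more directly, from equation (\ref{powergen}), since the dominating coefficient of the $x_1$-dominating Newton-Puiseux series $\eta(x_1^{1/n_1}, x_2^{1/n_2}, \ldots, x_h^{1/n_h})$ equals $[\tilde{\eta}]_0^{m_1} = \tilde{a}^{m_1} = 1$.

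Next I would establish the symmetric relation between the remaining essential coefficients. Substituting $\tilde{a} = 1$ into the second equation of (\ref{eq: invP2-S}) gives
\[
[\xi]_{e_k'} = -\frac{n_1}{m_1}\,[\eta]_{e_k} \quad \text{for all } k \in \{1, \ldots, d\},
\]
and multiplying by $m_1$ yields the stated identity $m_1 [\xi]_{e_k'} + n_1 [\eta]_{e_k} = 0$. The exponents written as $\epsilon_k, \epsilon_k'$ in the statement of Corollary \ref{coronegen} are the non-dominating essential exponents $e_k, e_k'$ of the two Newton-Puiseux series of Corollary \ref{invPuiseuxgen}; they are in bijective correspondence via the relation (\ref{eq: invP1-S}), so the identification is straightforward.

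There is no genuine obstacle here, the generalized Halphen-Stolz theorem having already been proved; the only thing to be careful about is matching up the two indexing conventions and checking that each occurrence of $\tilde{a}$ in the general formula of (\ref{eq: invP2-S}) is indeed raised to a power that becomes trivial once $\tilde{a} = 1$, which is manifestly the case for both $\tilde{a}^{-n_1}$ and $\tilde{a}^{-(1 + e_{k,1})n_1}$.
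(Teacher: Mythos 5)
Your proof is correct and is precisely the argument the paper leaves implicit: the corollary is stated as a direct cosmetic rewriting of Corollary \ref{invPuiseuxgen} under the specialization $\tilde{a}=1$, and you verify exactly that each power of $\tilde{a}$ appearing in (\ref{eq: invP2-S}) collapses to $1$, together with the short observation that the dominating coefficient of $\eta$ equals $\tilde{a}^{m_1}=1$ by (\ref{powergen}). One small imprecision worth noting: the relation between the $\epsilon_k$ notation appearing in the statement and the $e_k$ notation of Corollary \ref{invPuiseuxgen} is not governed by (\ref{eq: invP1-S}) (which relates $e_k$ to $e_k'$, not $\epsilon_k$ to $e_k$) but by the coordinate rescaling $x_i=t_i^{n_i}$ (Lemma \ref{ess-Pgen}), under which the corresponding monomial coefficients are preserved; this does not affect the conclusion, since the coefficients $[\eta]$ and $[\xi]$ in question are identical under either indexing.
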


In order to summarize our reasoning, let us draw the analogue of the flow-chart (\ref{reasoning}) 
in which $f(x_1,y_1, x_2, \dots , x_h) \in \C[[x_1, y_1, x_2, ..., x_h]]$ is an irreducible series:

 \begin{equation} \label{reasoninggen}
    \xymatrix{
                    & \overset{  \Ess(\eta(x_1^{1/n_1}, \dots), \Z^h)  }{ \{   \eta(x_1^{1/n_1}, \dots) \}  }
                    \ar@{<->}[rr]_(.48){(\mathrm{Lm.} \ref{ess-Pgen})}^(.48){x_1={t_1}^{n_1}}      &  & 
                      \overset{  \Ess(\eta(t_1, \dots), n_1 \Z \nu_1 + \cdots )}{  \{ \eta(t_1, t_2, \dots) \}  }     &  & 
                     \overset{ \Ess(\tilde{\eta}(t_1, \dots), \gcd(m_1,n_1) \Z  \nu_1 + \cdots)}{  
                         \{  \tilde{\eta}(t_1, \dots)  \} }\ar[ll]^(.55){\begin{psmallmatrix}
                                     (\mathrm{Prop.} \ref{esspowersgen} (\ref{esspowgen}),  \\
                                     \mathrm{Lm.} \ref{divpowergen}) 
                                     \end{psmallmatrix}}_(.55){\eta=(t_1 \tilde{\eta})^{m_1}} 
                             \ar@{<->}[dd]_{(  \mathrm{Prop.} 
                                  \ref{esspowersgen} ( \ref{essdualgen})  )}^{\begin{array}{c}
                                                  u_1 = t_1 \:  \tilde{\eta}  \\  
                                                  \Updownarrow \\
                                                  t_1 = u_1 \:  \tilde{\xi}
                                             \end{array} }\\
        \hspace{-0.4cm} f(x_1,y_1, x_2, \dots , x_h) \; 
         \hspace{-0.9cm}\ar[ur] \hspace{-0.9cm}\ar[dr]     
                &     &     &   \\
                    & \underset{\Ess(\xi(y_1^{1/m_1}, \dots), \Z^h)} { \{  \xi(y_1^{1/m_1}, \dots)   \} }
                          \ar@{<->}[rr]^(.48){(\mathrm{Lm.} \ref{ess-Pgen})}_(.48){y_1 ={u_1}^{m_1}}      &      &  
                      \underset{ \Ess(\xi(u_1, \dots), m_1 \Z  \nu_1+ \cdots )}{  \{ \xi(u_1, t_2, \dots) \} } &   & 
                 \underset{ \Ess(\tilde{\xi}(u_1, \dots), \gcd(m_1, n_1) \Z  \nu_1+ \cdots)}{ 
                     \{ \tilde{\xi}(u_1, \dots) \} } \ar[ll]_(.55){\begin{psmallmatrix}
                                     (\mathrm{Prop.} \ref{esspowersgen} (\ref{esspowgen}),  \\
                                    \mathrm{Lm.} \ref{divpowergen})
                                     \end{psmallmatrix}}^(.55){\xi=(u_1 \tilde{\xi})^{n_1}} 
               }
    \end{equation}

\medskip
\subsection{The special case of quasi-ordinary series}
\label{comparmany}
$\:$ 
\medskip

Among the Newton-Puiseux series in several variables, the \emph{quasi-ordinary} ones 
form a distinguished subclass, having many special properties. We compare both classes in this section 
with the help of additive orders and toric modifications. In particular, we get that Lipman's inversion theorem 
for quasi-ordinary series can be seen as a particular case of our generalized inversion theorem (Corollary   \ref{invPuiseuxgen}).

\begin{definition}
A series $\psi \in \C[[ x_1^{1/ \N}, \dots, x_h^{1/ \N} ]] $ is 
\textbf{quasi-ordinary} if $\psi$ is a Newton-Puiseux series relative to an irreducible polynomial  
$f \in \C[[x_1, \dots, x_h]][y_1]$, such that 
the discriminant, $\Delta_{y_1} (f) \in \C[[x_1, \dots, x_h]]$ of $f$ with respect to $y_1$, 
is the product of a monomial and of a unit in the ring $\C[[x_1, \dots, x_h]]$.
\end{definition}

If  the discriminant 
 $\Delta_{y_1} (f) $ of $f \in \C[[x_1, \dots, x_d]][y_1]$ 
is a monomial times a unit then, by the Jung-Abhyankar theorem (see \cite{A 55} and \cite{J 08}),  
$f$ factors in the ring
$\C[[ x_1^{1/ \N}, \dots , x_h^{1/ \N} ]][y_1]$ as a product of polynomials of degree 
$1$ in the variable $y_1$. 
If $y_1 - \psi$, $y_1 - \psi'$ are two different factors of $f$ in this ring, 
then $\psi - \psi'$ divides the discriminant $\Delta_{y_1} (f)$, hence
$\psi - \psi'$ is the product of a monomial times a unit in the ring 
$\C[[ x_1^{1/ \N}, \dots , x_h^{1/ \N} ]]$.
The monomials obtained in this way:
\[
x_1^{\alpha_{k,1}} \, \cdots \, x_{h}^{\alpha_{k,h}}, \mbox{ for } k \in \{1, \dots, g \},
\]
 are called  the {\bf characteristic monomials}, and the tuples:  
\[
\alpha_k  = (\alpha_{k,1}, \dots, \alpha_{k,h}), \mbox{ for } k \in \{1, \dots, g \},
\]
the {\bf characteristic exponents} of the quasi-ordinary series $\psi$. 
Lipman showed that the characteristic exponents
determine many features of the geometry of the germ 
of hypersurface defined by $f$ 
(for precise definitions and related results, see for instance \cite{L 65, L 83, L 88A, G 88}).
He also proved the following combinatorial characterization of quasi-ordinary power series 
(see \cite[Proposition 1.5]{L 65}  and  \cite[Proposition 1.3]{G 88}):

\medskip

\begin{lemma} \label{l:Lipman}
 Denote by $\leq$  the coordinate-wise order on  $\Q^h$. 
A  series $\psi \in \C[[ x_1^{1/ \N}, \dots, x_h^{1/\N} ]] $ is
quasi-ordinary if and only if there exist an integer $n \geq 1$ and elements 
$\lambda_1, \dots, \lambda_r \in \Supp(\psi)$
such that:
\begin{enumerate}
\item The support $\Supp(\psi)$ is included in $\frac{1}{n} \Z^h_+$. 
\item Every  $\lambda \in  \Supp(\psi)$ belongs to the group 
     $\Z^h + \sum_{\lambda_j \leq \lambda} \Z \lambda_j$.
\item  $\lambda_{i} \leq \lambda_{i+1}$, for every $i \in \{1, \dots, r-1\}$.
\item $\lambda_{i}$ does not belong to the group $ \Z^h + \sum_{j \leq i - 1} \Z \lambda_j $, 
for every $i \in \{1, \dots, r\}$.
\end{enumerate}
If such elements  exist, then they are the characteristic exponents of $\psi$.
\end{lemma}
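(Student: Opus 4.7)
The key tool is the explicit Galois-theoretic description of conjugates of $\psi$ recalled in Section~\ref{NPseriesmany}. Fix $n \in \N^*$ with $\Supp(\psi) \subset \tfrac{1}{n}\Z^h_+$ and let $G = G_n^h$ act on $\C((x_1^{1/n}, \dots, x_h^{1/n}))$ through characters $\chi_\sigma : \tfrac{1}{n}\Z^h \to \C^*$ satisfying $\Z^h \subset \ker\chi_\sigma$. Since $\sigma \cdot x^\lambda = \chi_\sigma(\lambda)\, x^\lambda$, one has the identity
\[
\psi - \psi_\sigma \; = \; \sum_{\lambda \in \Supp(\psi) \setminus \ker\chi_\sigma} [\psi]_\lambda \,(1 - \chi_\sigma(\lambda))\, x^\lambda,
\]
from which both directions of the equivalence will follow. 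In particular, for $\sigma \notin \mathrm{Stab}(\psi)$ the support of $\psi - \psi_\sigma$ is exactly the part of $\Supp(\psi)$ lying outside the subgroup $\ker\chi_\sigma$.

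For the forward direction, assume $\psi$ is a root of an irreducible $f \in \C[[x_1,\dots,x_h]][y_1]$ with monomial-times-unit discriminant. By the Jung--Abhyankar theorem every root of $f$ lies in $\C[[x_1^{1/\N}, \dots, x_h^{1/\N}]]$, and since $\Delta_{y_1}(f)$ equals (up to sign) the squared Vandermonde $\prod_{i<j}(\psi_i - \psi_j)^2$, each difference $\psi - \psi_\sigma$ with $\sigma \notin \mathrm{Stab}(\psi)$ is itself a monomial times a unit. Define $\lambda_1, \dots, \lambda_r$ as the distinct leading exponents so obtained, ordered coordinate-wise; conditions (1) and (4) follow directly, and (3) follows from the coordinate-wise comparability forced by applying the monomial-times-unit property to $\psi_\sigma - \psi_\tau = (\psi - \psi_\tau) - (\psi - \psi_\sigma)$. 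Condition (2) is established by contradiction: if some $\lambda \in \Supp(\psi)$ lay outside $L := \Z^h + \sum_{\lambda_j \leq \lambda} \Z\lambda_j$, then Pontryagin duality for the finite abelian group $\tfrac{1}{n}\Z^h / L$ furnishes a character $\chi_\sigma$ trivial on $L$ but nontrivial on $\lambda$, and the corresponding difference $\psi - \psi_\sigma$ would have a leading exponent distinct from every $\lambda_k$.

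For the backward direction, given $\lambda_1 \leq \cdots \leq \lambda_r$ satisfying (1)--(4), I show that $\psi - \psi_\sigma$ is a monomial times a unit for each $\sigma \notin \mathrm{Stab}(\psi)$. Let $k$ be the smallest index with $\lambda_k \notin \ker\chi_\sigma$; this is well-defined because otherwise (2) would force $\Supp(\psi) \subset \ker\chi_\sigma$, contradicting $\psi \ne \psi_\sigma$. For any $\lambda \in \Supp(\psi) \setminus \ker\chi_\sigma$, condition (2) furnishes some $\lambda_j \leq \lambda$ outside $\ker\chi_\sigma$; necessarily $j \geq k$, so $\lambda_k \leq \lambda_j \leq \lambda$ by (3). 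Hence $\lambda_k$ is the coordinate-wise minimum of $\Supp(\psi - \psi_\sigma)$, with nonzero coefficient $[\psi]_{\lambda_k}(1 - \chi_\sigma(\lambda_k))$, and $\psi - \psi_\sigma = x^{\lambda_k} \cdot \mathrm{unit}$. The polynomial $f := \prod_{\sigma \in G/\mathrm{Stab}(\psi)} (y_1 - \psi_\sigma)$ is then irreducible with Galois-invariant coefficients in $\C[[x_1, \dots, x_h]]$, and its discriminant $\prod_{\sigma \neq \tau}(\psi_\sigma - \psi_\tau)$ is a product of monomial-times-unit terms, hence is a monomial times a unit. The identification of $\lambda_1,\dots,\lambda_r$ with the characteristic exponents follows from the same computation as $\sigma$ varies over $G/\mathrm{Stab}(\psi)$.

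The main obstacle I anticipate is the coordinate-wise comparability argument (3) in the forward direction, since $\Q^h_+$ is only partially ordered and a general difference of Newton-Puiseux series need not have a coordinate-wise minimum in its support; the argument hinges on ruling out the occurrence of two incomparable dominating monomials in the monomial-times-unit series $\psi_\sigma - \psi_\tau$. Once this comparability is secured, the remainder of the verification reduces to bookkeeping with the duality between subgroups of $\tfrac{1}{n}\Z^h$ containing $\Z^h$ and subgroups of characters of $G$.
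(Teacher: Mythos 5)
The paper does not prove Lemma \ref{l:Lipman}: it cites Lipman \cite[Prop. 1.5]{L 65} and Gau \cite[Prop. 1.3]{G 88} and takes the statement as known. Your argument is a correct self-contained proof along the standard Galois-theoretic lines, and it fills in what the paper leaves to the references.

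A few places could be tightened. For the forward direction you should say explicitly why each $\psi-\psi_\sigma$ is a monomial times a unit: $\C[[x_1^{1/n},\dots,x_h^{1/n}]]$ is a UFD, and an element whose square divides a monomial times a unit must itself be a monomial times a unit. Your comparability argument for (3) is essentially right but compressed; the clean version is: if $\lambda_\sigma$ and $\lambda_\tau$ were incomparable, then $\lambda_\sigma\in\ker\chi_\tau$ and $\lambda_\tau\in\ker\chi_\sigma$ (otherwise minimality of the other leading exponent would force comparability), so $[\psi_\sigma-\psi_\tau]_{\lambda_\sigma}=[\psi]_{\lambda_\sigma}(\chi_\sigma(\lambda_\sigma)-1)\neq0$ and likewise at $\lambda_\tau$, giving two incomparable minimal exponents in the support of $\psi_\sigma-\psi_\tau$, contradicting that it is a monomial times a unit (it is a Galois conjugate of some $\psi-\psi_\rho$, and conjugation only rescales coefficients by roots of unity, so preserves this property). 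You implicitly use the same observation when asserting that the discriminant $\prod_{\sigma\neq\tau}(\psi_\sigma-\psi_\tau)$ is a monomial times a unit in the backward direction. Finally, for the ``such elements are the characteristic exponents'' clause it is worth noting that all indices $k\in\{1,\dots,r\}$ are realized as leading indices: condition (4) plus Pontryagin duality for $\tfrac1n\Z^h/(\Z^h+\sum_{j<i}\Z\lambda_j)$ yields a $\sigma$ with $\lambda_j\in\ker\chi_\sigma$ for $j<i$ but $\lambda_i\notin\ker\chi_\sigma$, so each $\lambda_i$ appears as the leading exponent of some $\psi-\psi_\sigma$. With these elaborations the proof is complete.
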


\medskip

The following lemma is an analogue of Lemma \ref{charess}. Its proof 
is a consequence 
of the definitions of essential 
exponents 
and of Lemma \ref{l:Lipman}. It shows how to recover the
characteristic exponents of a quasi-ordinary series from a sequence of essential 
exponents relative to
 the lattice $\Z^h$ and \emph{any} additive order $\preceq$ dominating $\Q^h_+$.

\begin{lemma}  \label{charess-S} 
    Let $\psi \in \C[[ x_1^{1/ \N},\dots, x_h^{1/ \N} ]]$ be a quasi-ordinary series 
    with characteristic exponents $\alpha_1, \dots, \alpha_g$. Let us denote by 
    $(e_0, \dots, e_d)$ the sequence of essential elements of the support $S(\psi)$ 
    relative to the lattice $\Z^h$ and a fixed additive order $\preceq$ dominating $\Q^h_+$.  
    Then:
\begin{itemize}
\item If $e_0 \notin \Z^h$, then $g = d+1$ and $\alpha_k = e_{k-1}$ for $k \in \{1, \dots, d+1\}$.
\item If $e_0 \in \Z^h$, then $g = d$ and $\alpha_k = e_k$ for $k \in \{ 1, \dots, d\}$.
\end{itemize} 
\end{lemma}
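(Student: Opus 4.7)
The plan is to prove the lemma by induction on $k$, matching each characteristic exponent $\alpha_k$ with an essential exponent $e_{k-c}$, where $c \in \{0,1\}$ is determined by whether $e_0 \in \Z^h$. A preliminary observation is that the additive order $\preceq$ extends the coordinate-wise order $\leq$ on $\Q^h$: if some $v \in \Q^h_+$ satisfied $v \prec 0$, then by translation invariance the set $\{0, v, 2v, \dots\} \subset \Q^h_+$ would have bounded denominators and no $\preceq$-minimum, contradicting the hypothesis that $\preceq$ dominates $\Q^h_+$. Hence $0 \preceq v$ for every $v \in \Q^h_+$, and so $a \leq b$ implies $a \preceq b$. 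This is the bridge between Lipman's coordinate-wise characterization in Lemma \ref{l:Lipman} and the $\preceq$-minima appearing in Definition \ref{essellgen}.

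Set $c := 1$ if $e_0 \notin \Z^h$ and $c := 0$ otherwise. I will show by induction on $k \in \{1, \dots, g\}$ that $\alpha_k = e_{k-c}$, and that the essential sequence terminates precisely at index $g-c$. For the base step with $c = 1$, condition $(2)$ of Lemma \ref{l:Lipman} applied to $e_0 \in \Supp(\psi) \setminus \Z^h$ yields some $\alpha_j$ with $\alpha_j \leq e_0$ coordinate-wise (otherwise $e_0$ would lie in $\Z^h$); combined with condition $(3)$ and the preliminary observation, this gives $\alpha_1 \preceq \alpha_j \preceq e_0$, and the $\preceq$-minimality of $e_0$ forces $\alpha_1 = e_0$. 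For the base step with $c = 0$, the equality $\Z^h + \Z\{e_0\} = \Z^h$ reduces $e_1$ to $\min_{\preceq}(\Supp(\psi) \setminus \Z^h)$; since $\alpha_1 \notin \Z^h$ by condition $(4)$, the same argument applied to $e_1$ in place of $e_0$ yields $\alpha_1 = e_1$.

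For the inductive step, assume $\alpha_i = e_{i-c}$ for every $i \leq k$, so that $\Z^h + \Z\{e_0, \dots, e_{k-c}\} = \Z^h + \Z\{\alpha_1, \dots, \alpha_k\}$ (when $c = 0$, the redundant generator $e_0 \in \Z^h$ simply drops out). By condition $(4)$, $\alpha_{k+1} \in \Supp(\psi)$ lies outside this group, so $e_{k+1-c}$ is defined and $e_{k+1-c} \preceq \alpha_{k+1}$ by minimality. Suppose for contradiction that $e_{k+1-c} \prec \alpha_{k+1}$. Then condition $(2)$ applied to $e_{k+1-c}$ gives $e_{k+1-c} \in \Z^h + \sum_{\alpha_j \leq e_{k+1-c}} \Z \alpha_j$; any index $j \geq k+1$ contributing to this sum would force $\alpha_{k+1} \preceq \alpha_j \preceq e_{k+1-c} \prec \alpha_{k+1}$ via condition $(3)$ and the preliminary observation, which is impossible, so only indices $j \leq k$ appear, placing $e_{k+1-c}$ in $\Z^h + \Z\{\alpha_1, \dots, \alpha_k\}$ and contradicting its definition. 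Hence $e_{k+1-c} = \alpha_{k+1}$. Finally, once $k = g$, condition $(2)$ of Lemma \ref{l:Lipman} shows $\Supp(\psi) \subset \Z^h + \Z\{\alpha_1, \dots, \alpha_g\}$, which terminates the essential sequence and yields $d = g - c$.

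The main delicate point is systematically translating between the two orders on $\Q^h$: Lipman's Lemma is formulated via the coordinate-wise partial order $\leq$, while the essential sequence is defined via $\preceq$-minima. The preliminary observation that $\preceq$ extends $\leq$ is exactly what makes the inductive comparisons between $e_{k+1-c}$ and $\alpha_{k+1}$ go through, and it is also what justifies the generality of the statement over \emph{any} additive order $\preceq$ dominating $\Q^h_+$.
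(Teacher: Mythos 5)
Your proof is correct, and it is in the spirit of what the paper leaves implicit: the authors state that Lemma~\ref{charess-S} ``is a consequence of the definitions of essential exponents and of Lemma~\ref{l:Lipman},'' by analogy with the one-variable Lemma~\ref{charess}, but they do not write out the argument. You supply the missing content, and in particular you isolate the one non-trivial point that has no analogue in one variable: the verification that any additive order $\preceq$ dominating $\Q^h_+$ refines the coordinate-wise partial order $\leq$, so that the $\leq$-comparisons in Lipman's characterization can be converted into $\preceq$-comparisons against the $\preceq$-minima that define the essential sequence. This observation is exactly what makes the inductive matching $\alpha_k = e_{k-c}$ and the termination $d = g-c$ go through uniformly over the choice of $\preceq$, and it is worth stating explicitly rather than leaving it as ``a consequence of the definitions.''

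One small caveat, which is a defect of the paper's Definition~\ref{monord} rather than of your argument: the translation-invariance condition there is written only for $\alpha,\beta,\gamma \in \Z^h$, whereas your preliminary observation (the step $2v \prec v \prec 0$ for $v \in \Q^h_+$, and the deduction of $a \preceq b$ from $b-a \in \Q^h_+$ for rational $a,b$) uses translation invariance on all of $\Q^h$. The remark following that definition, which asserts an order-preserving group embedding $(\Q^h,\preceq) \hra (\R^s,\leq_{\mathrm{lex}})$, makes clear that $\Q^h$-translation-invariance is the intended hypothesis, so you are reading the definition the way the authors mean it; you could note this explicitly, or alternatively clear denominators first (multiply $v$ by its least common denominator $n$ to land in $\Z^h$) so as to invoke only the literal $\Z^h$-statement.
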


 \begin{remark}
Lipman proved an inversion theorem for the characteristic exponents of a quasi-ordinary 
series $\psi$, when $\psi$ is $x_1$-dominant. This result appeared in Lipman's PhD Thesis 
\cite[Lemma 2.3 and table 4.4]{L 65}, see also \cite{L 83}, while its proof 
was published later in \cite{L 88}. 
This proof is written in the two variable case but it extends naturally to more variables.
See also \cite[Proposition 5.5]{GP 03}.
Thanks to Corollary \ref{invPuiseuxgen} and Lemma \ref{charess-S}, we 
see that Lipman's inversion theorem for quasi-ordinary series is
a particular case of the part concerning exponents of  our inversion 
theorem (Corollary \ref{invPuiseuxgen}) for $x_1$-dominant Newton-Puiseux series 
in several variables.
\end{remark}
 
 \medskip

We end this paper with some remarks relating geometrically the Newton-Puiseux series with 
the quasi-ordinary series by using methods of toric geometry. They are inspired by  
the second-named author's proof of \cite[Th\'eor\`eme 3]{GP 00}.

  \medskip
 
 Let $f \in \C[[x_1, \dots, x_h]][y_1]$ be a reduced polynomial such that $f(0, \dots, 0) =0$. 
 Assume that $f$ is not quasi-ordinary. Then, the discriminant 
 $\Delta_{y_1} (f) \in \C [[x_1, \dots, x_h]]$ 
 is not of the form a monomial times a unit. It follows that the dual fan associated 
 to the Newton polyhedron of 
 $\Delta_{y_1} (f)$ defines a non-trivial subdivision of the positive quadrant $(\R^h)^{\vee}_+$ 
 of the vector space $(\R^h)^{\vee}$ of real weights of monomials 
 $x_1^{k_1} \cdots x_h^{k_h}$. 
 Let $\Sigma$ be a \emph{regular subdivision} of this dual fan. One has an associated  
 \emph{toric modification} $X_\Sigma \to \C^h$, which is obtained by patching  
 the monomial maps associated to the ${h}$-dimensional cones of 
 $\Sigma$. See for instance \cite{GT 00} or \cite{GP 00}
 for the basic definitions used in  these methods of toric geometry.

 Let $\sigma \in \Sigma$ be a ${h}$-dimensional cone of the fan $\Sigma$. It is spanned by 
 the forms $\gamma_1, \dots, \gamma_h $, 
 which are the primitive lattice vectors of the lattice $(\Z^h)^{\vee}$ lying
 on the edges of the cone $\sigma$. By the definition of the dual fan, the following property holds: 
 \begin{lemma} \label{Delta}
All the forms  $\gamma_1, \dots, \gamma_h$ reach
 their minimum value on the support of the discriminant 
$
 \Delta_{y_1} (f)
$
 at the same vertex $\lambda_0$ of its Newton polyhedron. 
 \end{lemma}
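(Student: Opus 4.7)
The plan is to invoke the defining property of the dual fan of a Newton polyhedron. Recall that, by construction, two weights $\gamma, \gamma' \in (\R^h)^{\vee}_+$ lie in the relative interior of the same cone of the dual fan of the Newton polyhedron of $\Delta_{y_1}(f)$ if and only if they attain their minimum on that polyhedron along the same face. In particular, a weight lies in the relative interior of an $h$-dimensional cone of the dual fan if and only if its minimum is attained at a unique vertex.

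First I would use the hypothesis that $\Sigma$ is a regular subdivision of the dual fan of the Newton polyhedron of $\Delta_{y_1}(f)$. Since $\sigma$ is $h$-dimensional, its relative interior is contained in the relative interior of a unique $h$-dimensional cone $\tau$ of the dual fan. Let $\lambda_0$ be the vertex of the Newton polyhedron of $\Delta_{y_1}(f)$ dual to $\tau$; every weight in the relative interior of $\tau$ attains its minimum on the polyhedron exactly at $\lambda_0$.

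Next I would argue that this conclusion extends to the closed cone $\overline{\tau}$, which contains the generators $\gamma_1, \dots, \gamma_h$ of $\sigma$ (these being boundary rays of $\sigma \subseteq \overline\tau$). Indeed, if $\gamma \in \overline{\tau}$ is obtained as a limit of weights $\gamma^{(k)}$ in the relative interior of $\tau$, then for each $k$ the minimum of $\gamma^{(k)}$ on the Newton polyhedron is attained at $\lambda_0$; passing to the limit gives $\langle \gamma, \lambda_0 \rangle \leq \langle \gamma, \lambda \rangle$ for every vertex $\lambda$ of the polyhedron. Hence $\gamma$ also attains its minimum at $\lambda_0$, although possibly on a larger face. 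Applying this to $\gamma_1, \dots, \gamma_h$ yields the claim, after observing that $\lambda_0$, being a vertex of the Newton polyhedron, belongs to the support of $\Delta_{y_1}(f)$, so the minimum on the support is achieved at $\lambda_0$ as well. The only delicate point is the passage from the open to the closed cone, but this is handled by the elementary continuity argument just sketched, so no serious obstacle arises.
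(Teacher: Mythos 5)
Your proof is correct and follows the same approach the paper implicitly takes: the paper simply asserts that the lemma holds ``by the definition of the dual fan,'' and your argument fills in exactly the details behind that assertion. Since $\Sigma$ refines the dual fan and $\sigma$ is $h$-dimensional, $\sigma$ sits inside a unique maximal cone $\bar\tau$ of the dual fan, which by construction is precisely the (closed) set of weights achieving their minimum on the Newton polyhedron at the vertex $\lambda_0$ dual to $\tau$; hence $\gamma_1,\dots,\gamma_h\in\sigma\subset\bar\tau$ all attain their minimum at $\lambda_0$, and $\lambda_0$ lies in the support, so the same conclusion holds for the minimum over the support. Your limit argument for passing from the open cone to its closure works, but it is slightly roundabout: one can avoid it altogether by recalling that the cone of the dual fan attached to the vertex $\lambda_0$ is \emph{already closed} and is by definition $\{\gamma : \langle\gamma,\lambda_0\rangle \le \langle\gamma,p\rangle \ \forall p\in P\}$, so membership of the $\gamma_s$ in $\bar\tau$ immediately gives the claim without any continuity argument.
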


We consider the coordinates $  (\gamma_{s,1}, \dots, \gamma_{s,h})$ 
 of the vectors  $\gamma_s$,  $s \in \{ 1, \dots, h\}$, with respect to the dual basis of 
 $\nu_1, \dots, \nu_h$. Let $q_\sigma \in \mathrm{GL}(h, \Q)$ be the linear map  
defined, with respect to the canonical basis  $\nu_1, \dots, \nu_h$ of $\Q^h$,
by the matrix whose rows are $  (\gamma_{s,1}, \dots, \gamma_{s,h})$, for 
every $s \in \{ 1, \dots, h\}$.
 
\medskip

 The monomial map 
 \begin{equation} \label{chart}
 \C[x_1, \dots, x_h] \to \C[v_1, \dots, v_h],  \quad x^{\lambda} \to v^{q_\sigma (\lambda) }
 \end{equation}
defines the chart $\C^h_\sigma \to \C^h$
 of the toric modification of $X_\Sigma \to \C^h$, associated to cone $\sigma$. 
If: 
\[
F= \sum_{\gamma= (\alpha, \beta) \in \N^{h}\times \N} c_\gamma x^{\alpha}  y_1^{\beta} \in
 \C[[ x_1,\dots, x_h, y_1]]
 \] 
 then the  pull-back of $F$ on
$\C^h_\sigma \times \C$ is defined by:  
\[
F_\sigma =
 \sum_{\gamma= (\alpha, \beta) \in \N^{h}\times \N} c_\gamma v^{q_\sigma( \alpha ) }  y_1^{\beta} 
 \in \C[[ v_1, \dots, v_h, y_1]].
 \]

\medskip

Assume that  $\psi = \sum c_\lambda x^\lambda   \in \C[[ x_1^{1/ \N},\dots, x_h^{1/ \N} ]]$ is a Newton-Puiseux series of $f$. Then, by definition 
$\psi_\sigma:= \sum c_\lambda v^{q(\lambda)}  \in  \C[[ v_1^{1/ \N},\dots, v_h^{1/ \N} ]]$ is 
a Newton-Puiseux series of the pull-back $f_\sigma$ of $f$. 
In addition, $q(\lambda_0)$ belongs to the 
support of  the discriminant $\Delta_{y_1} (f_\sigma)$. By 
Lemma \ref{Delta},  if $q(\lambda)$ belongs to the support of $\Delta_{y_1} (f_\sigma)$, 
then $q(\lambda_0) \leq q(\lambda)$ (for the coordinate-wise order). This implies that 
 the Newton-Puiseux series $\psi_\sigma$ is quasi-ordinary, since the discriminant 
 $\Delta_{y_1} (f_\sigma)$  is of the form $v^{q_\sigma(Ê\lambda_0 )}$ times a unit.
Using Lemma \ref{ess-Pgen} and the fact that $\sigma$ is a regular cone 
(which implies that $q_\sigma( \Z^h) = \Z^h$), we obtain also the relation:
\begin{equation} \label{q_sigma}
q_\sigma \,  (\Ess (\psi, \Z^h, \preceq_{q_\sigma}) ) = \Ess ( \psi_\sigma , \Z^h, \preceq) 
\end{equation}
between the essential exponents of $\psi$ and 
the essential exponents of the quasi-ordinary series $\psi_\sigma$.

\begin{example} \label{E:Tor}
By Lemma \ref{l:Lipman}, the series $\psi = x_1^{3/2} + x_2^{1/4} + x_1^{7/2} \, x_2^{5/2}$ 
is not quasi-ordinary.
If we consider the chart of the blowing up of $0 \in \C^2$ given by 
$x_1 = v_1 v_2$ and $x_2 = v_2$, whose associated cone we denote by $\sigma$, 
then we obtain the series: 
\[
 \psi_\sigma = v_1^{3/2} \, v_2^{3/2} + v_2^{1/4} + v_1^{7/2} v_2^{6}.
\]
By Lemma \ref{l:Lipman},  the series   $\psi_\sigma$ 
is quasi-ordinary.  It has  essential exponents  $(0, 1/4)$  and $(3/2, 3/2)$ with respect to 
the lattice $\Z^2$ and any additive order $\preceq$ of $\Q^2$ dominating $\Q^2_+$.  
By Lemma \ref{charess-S}, 
these pairs are also the characteristic exponents of $\psi_\sigma$. 
By (\ref{q_sigma}), 
the pairs $(0, 1/4)$ and $(3/2,0)$ are the
essential exponents of $\psi$ with respect to the order $\preceq_{q_{\sigma}}$.
\end{example}

\begin{remark}  \label{R:Tor}
Tornero studied in \cite{T 08} a notion of \emph{distinguished exponents} of the 
Newton-Puiseux series 
$\psi  \in \C[[ x_1^{1/ \N},\dots, x_h^{1/ \N} ]]$ with respect to a fixed additive order 
$\preceq'$ of $\N^d$.  
In Example \ref{E:Tor},  one can check that 
the distinguished exponents of $\psi$ relative to the additive order $\preceq_{q_{\sigma}}$
correspond to the characteristic exponents  
of the quasi-ordinary series 
$\psi_\sigma$. One can prove that this is a general phenomenon. 
By the previous discussion, it is enough to show that given 
a fixed additive order  $\preceq'$,   there exists 
a unique $h$-dimensional cone $\sigma \in \Sigma$  
and a unique additive order $\preceq$ dominating $\Q^h_+$ such that the orders 
$\preceq'$ and $ \preceq_{q_\sigma}$ coincide.
Indeed, the cone $\sigma$ is the unique $h$-dimensional cone of $\Sigma$ 
such that the additive order 
$\preceq'$ dominates $\sigma^\vee \cap \Q^h$, where $\sigma^\vee$ is the dual cone of 
$\sigma$ (the existence of $\sigma$  is a consequence of the properties of the 
Zariski-Riemann space of the fan $\Sigma$, see 
\cite{EI 06} and  \cite[Section 3.5]{GT 14}). 
\end{remark}

\medskip
\end{document}